\documentclass[12pt,reqno]{amsart}
\usepackage{amsfonts,amsthm,amsmath,amssymb,amscd}
\usepackage{mathabx} 
\usepackage{esint}
\usepackage{graphics}
\usepackage{indentfirst}
\usepackage{latexsym}
\usepackage[dvips]{epsfig}
\usepackage{mathrsfs}
\usepackage{color}
\usepackage{enumitem}
\usepackage{hyperref}
\usepackage{color, soul}
\date{}
\allowdisplaybreaks \allowdisplaybreaks[2]

\oddsidemargin=0truemm \evensidemargin=0truemm \textheight=200truemm
\textwidth=160truemm \baselineskip=16pt
\numberwithin{equation}{section}

\usepackage{float}
\usepackage{graphics}
\usepackage{epsfig}
\textwidth =16cm \textheight=24cm \hoffset 0cm \voffset 0cm
\topmargin -1cm \arraycolsep 2pt



\usepackage{tikz}
\usepackage{amssymb}

\newcommand{\norm}[1]{\left\lVert #1 \right\rVert}
\newcommand{\abs}[1]{\left\lvert #1 \right\rvert}
\newcommand{\subeqref}[2]{$ \eqref{#1}_{#2} $}

\def\p{\partial}
\def\R{\mathbb{R}}

\def\T{\mathcal{T}}

\def\ah{\hat{a}}
\def\th{\hat{T}}
\def\wh{\hat{w}}
\def\at{\tilde{a}}
\def\wt{\tilde{w}}
\def\supt{\mathop{sup}\limits_{t \in [0,\mathcal{T}]}}
\def\ph{\mathbf{\Phi}}
\def\w{\mathbf{W}}
\def\vsx{v^S_{\mathcal{X}}}

\def\avl{\overline{v}_l}
\def\avr{\overline{v}_r}
\def\aul{\overline{u}_l}
\def\aur{\overline{u}_r}
\def\vt{\tilde{v}}

\def\ut{\tilde{u}}

\def\X{\mathcal{X}}
\def\Y{\mathcal{Y}}

%

%



\newtheorem{theorem}{Theorem}[section]

\newtheorem{lemma}[theorem]{Lemma}
\newtheorem{proposition}[theorem]{Proposition}
\newtheorem{remark}[theorem]{Remark}

\newenvironment{pf}{{\noindent \it \bf Proof:}}{{\hfill$\Box$}\\}

\pagestyle{myheadings}
\begin{document}

\title[viscoelasticity with non-convexity under periodic perturbations]{Stability of Large-Amplitude Viscous Shock Under Periodic Perturbation for 1-d Viscoelasticity with Non-Convex Constitutive Relations}
	
	\author{Yu~Mei}
	\address[Yu Mei]{School of Mathematics and Statistics, Northwestern Polytechnical University, Xi'an 710129, China}
	\email{\href{mailto:yu.mei@nwpu.edu.cn}{yu.mei@nwpu.edu.cn}}
	
	\author{Peng Yuan*}
	\thanks{*Corresponding Author}
	\address[Peng Yuan]{School of Mathematics and Statistics, Northwestern Polytechnical University, Xi'an 710129, China}
	\email{\href{mailto:pengyuanq@foxmail.com}{pengyuanq@foxmail.com}}

\keywords{system of viscoelasticity, viscous shock waves,  nonlinear stability,  non-convexity, periodic perturbation, large-amplitude}

\subjclass[2020]{35B40, 35L65, 35L67, 76A10}

\maketitle
\begin{abstract}
	This paper investigates the large-time behavior of the viscous shock profile for the one-dimensional system of viscoelasticity, subject to initial perturbations that approach space-periodic functions at far fields. We specifically address the case with non-convex constitutive stress relations and non-degenerate Lax's shock. Under the assumptions of suitably small initial perturbations satisfying a zero-mass type condition, we prove that the solution of the system converges to a viscous shock profile with a shift, which is partially determined by the space-periodic perturbation. Notably, our result imposes no amplitude restrictions on the viscous shock waves.  This work extends the result of Kawashima-Matsumura (\textit{Commun. Pure Appl. Math.} \textbf{47}, 1994) by simultaneously handling both large-amplitude shocks and space-periodic perturbations, while also generalizing the result of Huang-Yuan (\textit{Commun. Math. Phys.} \textbf{387}, 2021) by allowing for a non-convex constitutive relation. The key ingredient of proof is decomposing the large-amplitude shock wave into small-amplitude shocks and, for each, introducing suitable transform and weight functions to counteract the adverse effects of non-convex constitutive relations encountered during weighted energy estimates on the system in effective velocity and deformation gradient variables.
\end{abstract}


%
\section{Introduction} 
In this paper, we consider the following one-dimensional system of viscoelasticity with non-convex constitutive relations  
\begin{equation}\label{VNC-Ion}
	\begin{cases}
		\p_tv-\p_xu=0,&\quad\\
		\p_tu-\p_x{\sigma(v)}=\mu\p^2_xu,&\quad\\
	\end{cases}x\in \R, t>0,
\end{equation}
where $v$ and $u$ denote the deformation gradient and the velocity, $\mu>0$ is the viscous constant, and $\sigma(v)$ represents the stress, which is assumed to be a $C^3(\R)$ function and satisfy the following sign conditions:
\begin{equation}\label{SC}
	\begin{aligned}
		\sigma'(v)>0,\quad \text{for all}~~v ~~\text{under consideration,}
	\end{aligned}
\end{equation}
\begin{equation}\label{SC2}
	\begin{aligned}
		\sigma''(v)\gtrless 0,\quad \text{for} ~~ v\gtrless0 ~~\text{under consideration,}
	\end{aligned}
\end{equation}
\begin{equation}\label{SC3}
	\sigma'''(v)>0, \quad \text{for}~ v\neq0~\text{under consideration,}
\end{equation}
so that $\sigma(v)$ has a point of inflection at $v=0$. The system \eqref{VNC-Ion} with $\mu=0$ is strictly hyperbolic with the two different characteristic roots
\begin{equation}\label{characteristic roots}
	\lambda=\pm \lambda(v),\qquad \text{where}~\lambda(v)=\sqrt{\sigma'(v)}.
\end{equation}
Moreover, the characteristics are neither genuinely nonlinear nor linearly degenerate. We are concern about  the Cauchy problem of \eqref{VNC-Ion} with the initial data
\begin{equation}\label{initial}
	(v,u)(x,0)=(v_0(x),u_0(x)),~~x\in\R,
\end{equation} 
satisfying
\begin{equation}\label{cond-farfield}
	(v_0,u_0)(x)~\to 
	\begin{cases}
		(\overline{v}_l,\overline{u}_l)+(\phi_{0l},\psi_{0l}) \qquad \text{as}~x~\rightarrow -\infty\\
		(\overline{v}_r,\overline{u}_r)+(\phi_{0r},\psi_{0r}) \qquad \text{as}~x~\rightarrow +\infty
	\end{cases}
\end{equation}	
where $(\overline{v}_l,\overline{u}_l)$, $(\overline{v}_r,\overline{u}_r)$ are given constants, $(\phi_{0l},\psi_{0l}) \in \R^2$ and $(\phi_{0r},\psi_{0r}) \in \R^2$ are  $\pi_l$-, $\pi_r$-periodic functions respectively. 
Without loss of generality, we assume that the constant states $(\overline{v}_{l,r},\overline{u}_{l,r})$ generate a single 2-shock to the Riemann problem of the system \eqref{VNC-Ion} with $\mu=0$
and initial data
\begin{equation}\label{eini}
	(v,u)(x,0)=(v_0,u_0)(x)=
	\begin{cases}
		(\avl,\aul),x<0,\\
		(\avr,\aur),x>0.
	\end{cases}
\end{equation}
That is,  $(\overline{v}_{l,r},\overline{u}_{l,r})$ satisfy the Rankine-Hugoniot condition,
\begin{equation}\label{RH-cond}
	\begin{cases}
		-s(\overline{v}_r-\overline{v}_l)-(\overline{u}_r-\overline{u}_l)=0,\\
		-s(\overline{u}_r-\overline{u}_l)-(\sigma(\overline{v}_r)-\sigma(\overline{v}_l))=0
	\end{cases}
\end{equation}
and the generalized shock condition
\begin{equation}\label{degenerate}
 \lambda(\overline{v}_r)\leq s<\lambda(\overline{v}_l),
\end{equation}
where $\lambda(v)$ is given in \eqref{characteristic roots} and $s$ is given by
\begin{equation}\label{shock speed}
	s=\sqrt{\frac{\sigma(\overline{v}_r)-\sigma(\overline{v}_l)}{\overline{v}_r-\overline{v}_l}}.
\end{equation}
From now on,  we only consider the non-degenerate shock, namely 
\begin{equation}\label{non-degenerate}
	\sigma'(\overline{v}_r)< s^2<\sigma'(\overline{v}_l),
\end{equation} 
and assume that $\overline{v}_r>\overline{v}_l, \avl\avr<0$ without loss of generality.
We also assume the periodic perturbations in \eqref{cond-farfield}  have zero averages
\begin{equation}\label{0-A}
	\int_{0}^{\pi_l}(\phi_{0l},\psi_{0l})\,dx=0 \quad and \quad \int_{0}^{\pi_r}(\phi_{0r},\psi_{0r})\,dx=0.
\end{equation}
The system \eqref{VNC-Ion} admits a viscous shock profile $(v^S,u^S)(\xi)=(v^S,u^S)(x-st)$ connecting $(\overline{v}_l,\overline{u}_l)$ and $(\overline{v}_r,\overline{u}_r)$ satisfying \eqref{RH-cond} and \eqref{non-degenerate}(c.f.\cite{InitialModel}). That is, there exists a traveling wave solution $(v^S,u^S)(\xi)$ to \eqref{VNC-Ion} satisfying
\begin{equation}\label{profile}
	\begin{cases}
		-s(v^S)'(\xi)-(u^S)'(\xi)=0,\qquad \xi \in \R,\\
		-s(u^S)'(\xi)-(\sigma(v^S))'(\xi)=\mu (u^S)'',\quad \xi \in \R\\
		(v^S,u^S)(\xi) \to (\overline{v}_l,\overline{u}_l)(resp.(\overline{v}_r,\overline{u}_r)) \quad \text{as}~\xi \to -\infty(resp.+\infty).
	\end{cases}
\end{equation}

The stability of viscous shock waves for system of viscous conservation laws, for examples compressible Navier-Stokes equations, etc., is an important issue from both mathematical and physical viewpoints. 
When the periodic perturbations vanish, (i.e. $\phi_{0l,0r}=\psi_{0l,0r}=0$ in \eqref{cond-farfield}),  Il'in and Ole\v inik in their pioneering work \cite{Oleinik.1960} studied the scalar conservation law with strictly convex flux, and proved the stability of shocks and rarefaction waves by using the maximum principle. For compressible Navier-Stokes equations and general parabolic system with genuinely nonlinear characteristics, Matsumura and Nishihara \cite{zero-mass},
and Goodman \cite{Goodman.1986} independently developed $L^2$-energy method to prove the stability of viscous shock, provided that the shock-amplitude is small and the initial perturbation is small and of zero mass. Later, Kawashima and Matsumura \cite{MatsumuraCMP} also obtained the stability of shocks for several systems in gas dynamics.
The zero-mass condition was further removed in Liu \cite{Liu1}, Szepessy and Xin \cite{Xin1}, and  Liu \cite{Liu2}  by introducing diffusion
waves to carry excessive masses and establishing the point-wise estimates. For large-amplitude shocks, Zumbrun and his collaborators, in a series of work \cite{Zumbrun1998,Mascia2004, Zumbrun2009,Zumbrun2017,Zumbrun2016}, showed the spectral stability under small initial perturbations for various viscous conservation laws including compressible Navier-Stokes, MHD equations and so on.
Vasseur and Yao \cite{Yao.2016} took advantage of
``effective velocity" to prove the nonlinear stability of the large-amplitude but  small initial perturbations viscous shock for the isentropic Navier-Stokes
equations, which was further extended by He and Huang \cite{HehuangJDE} to the case with not strictly convex pressure. Concerning with large initial perturbations but small-amplitude shocks, Kang and Vasseur recently established
the $L^2$-contraction properties for viscous conservation laws in \cite{vasseur2,vasseurjmes}, and proved the stability of shock in \cite{vasseurinvent} for compressible Navier-Stokes equations allowing vanishing viscosity limits. Together with Wang, they also obtained the stability of composite wave of a viscous shock and a rarefaction wave in \cite{vasseurwang}.  See also \cite{GoodmanTAMS,wangjems} and the references for the stability of planar shock waves. For the viscous conservation laws with non-convex flux, Kawashima and Matsumura \cite{InitialModel} proved the stability of viscous shock for the scalar case as well as the viscoelasticity system \eqref{VNC-Ion}-\eqref{SC3} by weighted energy method, provided that the shock-amplitude is small and the initial perturbation is small and of zero mass. Furthermore, the stability results for the degenerate shock and more general flux case were obtained in  \cite{Matsumura-Nishihara1995,Nishihara.1995,meiM3AS,Meiming.1997}.  Later on, for the viscoelasticity system \eqref{VNC-Ion} with a non-convex stress of the opposite sign to \eqref{SC2},  Matsumura and Mei \cite{Gaijin} established the stability of viscous shock with small initial perturbations and shock speed $s$ satisfying some conditions (condition (2.18) in \cite{Gaijin}) by introducing a tranform functions to get a new equivalent system and using the weighted energy method. Very recently, Huang, Wang and Zhang\cite{HuangWangZhang} obtained the stability of composite waves of degenerate Oleinik shock and rarefaction for cubic non-convex scalar conservation laws by a new type of $a$-contraction method. We also refer to \cite{LiuGuo} for the stability of shock in the case of not 
strictly convex stress, and \cite{Liu1988, kawashima1993} for the viscoelasticity model with fading memory.

To the best of our knowledge, until now, there has been no result on the stability of large-amplitude shock for the system of viscoelasticity with non-convex constitutive relations. The aim of this paper is to investigate the large-time stability of the viscous shock $(v^S,u^S)$ for the Cauchy problem  \eqref{VNC-Ion},\eqref{initial}, with non-convex constitutive relations \eqref{SC}-\eqref{SC3} in the case of non-degenerate shock \eqref{non-degenerate} and periodic perturbations of constant far fields \eqref{cond-farfield}. This non-localized perturbation of Riemann solutions to the hyperbolic system was firstly studied by Xin, Yuan, and Yuan \cite{Xin.2019,Xin.2021,Yuan.2020}, where they proved the stability of shocks and rarefaction waves with periodic perturbations at far fields for the 1-d scalar conservation laws in both inviscid and viscous cases. Recently, Huang and Yuan \cite{HuangYuanCMP} established the stability of large-amplitude viscous shock under
periodic perturbation for $1$-$d$ compressible isentropic Navier–Stokes
equations. We also refer to \cite{MeiYuan2025} for the stability of shock and rarefaction waves of Navier-Stokes-Poisson system and \cite{YuanSiam,YuanSCM2025} for the planar shocks of viscous scalar conservation law and compressible Navier-Stokes equations. It should be remarked that Lax \cite{Lax1} and
Glimm-Lax \cite{Lax2} were the first to study the periodic solutions to hyperbolic conservation laws. They showed that for the scalar equations and $2\times2$ systems, the periodic solutions time-asymptotically decay to their constant averages. When we study the stability of shocks under periodic perturbations, some new difficulties arise, for example, the solution to the viscous system carries the periodic oscillations at infinity in space so the difference between the solution and the viscous shock profile is not integrable on $\R$. In \cite{Xin.2019,Xin.2021,Yuan.2020,HuangYuanCMP,MeiYuan2025}, they constructed a suitable ansatz by selecting two appropriate shift curves $\X(t)$ and $\Y(t)$. The ansatz carries the same oscillations of the solution $(v,u)$
at the far fields, so that the difference $(v-\tilde v,u-\tilde u)$ belongs to the $H^2(\R)$ space and thus the energy method based on anti-derivative variables is available. 

 The main result can be roughly stated as follows: if the initial perturbation around the viscous shock is suitably small and satisfies a zero-mass type condition, then the solution $(v,u)$ of the Cauchy problem \eqref{VNC-Ion},\eqref{initial}, with non-convex constitutive relations \eqref{SC}-\eqref{SC3} and non-degenerate shock condition  \eqref{non-degenerate}, tends to the viscous shock profile $(v^S,u^S)(\xi)$ with a shift as time goes to the infinity, where the shift is partially determined by the space-periodic oscillations. The rigorous theorem will be stated in Theorem \ref{mainresult}. It should be remarked that our result imposes no amplitude restrictions on the shocks and extend the result in \cite{InitialModel,Gaijin} by removing the small amplitude assumption of shock and allowing space-periodic perturbations, while also generalize the result in \cite{HuangYuanCMP} to the non-convex constitutive relation case.

Now let us introduce the strategies in the proof. As in many previous works, the general strategy consists of the two main steps:
\begin{itemize}
	\item [1.] Construct a suitable ansatz $(\tilde{v},\tilde{u})$ with the desired asymptotic behavior, and derive the system of ansatz with some decay residual terms.
	\item[2.] Take the difference between the original system and ansatz system $(v-\tilde{v}, u-\tilde{u})$,  and show the global solutions in terms of anti-derivative variables with large time behavior  $(v-\tilde{v}, u-\tilde{u})\rightarrow 0$ as $t\rightarrow+\infty$ for small perturbed initial data.
\end{itemize}
In step 1, we can construct the ansatz $(\tilde{v},\tilde{u})$, by the same way as in Huang and Yuan \cite{HuangYuanCMP} for compressible Navier-Stokes equations with convex pressure, to carry the same oscillations of the solution $(v,u)$ at the far fields, and try to use the effective velocity $m:=u-\mu v$ to deal with the large amplitude. However, in step 2, when the constitutive relation is non-convex like \eqref{SC2},  the energy estimates of the anti-derivative variable of the deformation gradient and the effective velocity will produce a bad term involving $\sigma''(v)$ with  a change sign near $v=0$, which can not be controlled.  This difficulty was overcame by Kawashima and Matsumura \cite{InitialModel} for the small amplitude shock without periodic perturbation case by the weighted energy method and Matsumura and Mei \cite{Gaijin} for the shock satisfying a stronger condition than Lax's shock \eqref{non-degenerate} by  the weighted energy method applied on a new system after a transformation.  Motivated by \cite{Gaijin} and \cite{InitialModel}, 
we also apply the method of transformed and  weighted energy estimates to the equations of deformation gradient $v$ and the effective velocity $m$. However, in order to remove the constraint on shock speed $s$ in \cite{Gaijin}, it is subtle to construct the transform function and weight function. In fact, the condition in \cite{Gaijin} requires some smallness of shock amplitude. Our idea is that we decompose the large-amplitude shock satisfying \eqref{non-degenerate} into small-amplitude shocks, and 
design two groups of transform functions and weight functions for each small-amplitude shocks so that the method of transformed and  weighted energy estimates is available. As a cost, our weighted and transform functions are no longer $C^1$, but only piecewise smooth functions. Therefore, 
our construction of weight and transform functions have to ensure that the jump of flux  in the process of 
energy estimate should be non-positive on each non-smooth point. In section 3.1.2 below, we construct the weighted and transform functions separately in  $[\avl,0]$ and $[0,\avr]$, and extend them a little bit to $[\avl,v_d]$ and $[v_{-1},\avr]$, and divide the energy estimates on $\mathbb{R}$ into two parts with
$\vsx$ in $[\avl,0]$ and $[0,\avr]$. By performing the weighted energy method to the transformed system \eqref{transystem} in terms of anti-derivative variables of $(v,m)$ for viscous shock $\vsx$ in $[\avl,0]$ and $[0,\avr]$ respectively, and adding them together, we can obtain the basic energy estimate. Then, we go back to the system  \eqref{system2} to achieve the desired estimates of higher-order derivatives.

This paper is organized as follows. In Sect.\ref{section2}, we give some properties of viscous shock wave and periodic solutions, and then construct the ansatz and state the main result. In Sect.\ref{section3}, we derive the a priori estimates  through the anti-derivative method for the perturbations, and the main theorem is proved in Sect.\ref{section4}. Some tedious calculations about the flux are supplemented in the appendices. For the notational conventions, we denote $\norm\cdot =\norm\cdot_{L^2(\R)}$ and $\norm \cdot _k=\norm \cdot_{H^k(\R)}$ for $k \geqq 1$ throughout this paper.

\section{Ansatz and Main Result}\label{section2}

\subsection{Properties of viscous shock and periodic solutions}

We first recall some properties of the viscous shock profile in \eqref{profile} whose proof refer to  Section 3.1 in \cite{InitialModel}.
\begin{lemma}\label{unique}
	(\cite{InitialModel})~Suppose that~\eqref{SC}~and~\eqref{SC2}~hold. Let $(\overline{v}_l,\overline{u}_l)\neq (\overline{v}_r,\overline{u}_r)$ satisfy \eqref{RH-cond} and \eqref{degenerate}. Then
	\begin{enumerate}
		\item[(i)] there exists a shock profile $(v^S,u^S)(x-st)$ of \eqref{VNC-Ion}
		which connects  $(\overline{v}_l,\overline{u}_l)$ and $(\overline{v}_r,\overline{u}_r)$. The $(v^S,u^S)(\xi)$ is unique up to a shift $\xi$, and both
		$v^S(\xi)$ and $u^S(\xi)$ are monotone functions of $\xi$.
		\item[(ii)] For $s>0$ in \eqref{shock speed}. When $\overline{v}_r>\overline{v}_l$ (and hence $\overline{u}_r <  \overline{u}_l$ by \eqref{RH-cond}), we have
		\begin{equation}\label{mono}
			\begin{aligned}
			&\overline{v}_r > v^S(\xi) > \overline{v}_l,\qquad (v^S(\xi))'> 0,\\
			&\overline{u}_r <  u^S(\xi) <  \overline{u}_l,\qquad (u^S(\xi))'<  0,\\
			&s\mu(v^S)'=-s^2v^S+\sigma(v^S)-a\equiv h(v^S),
			\end{aligned}
		\end{equation}
		where
		\begin{equation}
			a=-s^2\overline{v}_{l,r}+\sigma(\overline{v}_{l,r})
		\end{equation}
	so that $h((\overline{v}_{l,r})=0$.
	\end{enumerate}
\end{lemma}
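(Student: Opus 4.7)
The plan is to reduce the profile system \eqref{profile} to a single scalar autonomous ODE for $v^S$, and then read off existence, uniqueness (up to translation) and monotonicity from a phase-line analysis under the generalized shock condition \eqref{degenerate}.

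First I would integrate the first equation of \eqref{profile} once from $-\infty$ using $(v^S,u^S)(-\infty)=(\bar{v}_l,\bar{u}_l)$ to obtain the algebraic relation $u^S(\xi)=-sv^S(\xi)+(\bar{u}_l+s\bar{v}_l)$, which also equals $-sv^S(\xi)+(\bar{u}_r+s\bar{v}_r)$ by the Rankine--Hugoniot relation \eqref{RH-cond}. This already reduces the two-component profile problem to one for $v^S$ alone, and shows that the monotonicity of $u^S$ is opposite to that of $v^S$ (using $s>0$), so once $v^S$ is monotone increasing on $(\bar{v}_l,\bar{v}_r)$, we immediately get all inequalities in the second and third lines of \eqref{mono}. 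Substituting the expression for $u^S$ into the second equation of \eqref{profile} and using $(u^S)'=-s(v^S)'$ from the first, a straightforward calculation gives the scalar ODE
\begin{equation*}
s\mu(v^S)'(\xi)=-s^2v^S(\xi)+\sigma(v^S(\xi))-a\equiv h(v^S(\xi)),\qquad a=-s^2\bar{v}_{l,r}+\sigma(\bar{v}_{l,r}),
\end{equation*}
where the two values of $a$ coincide by \eqref{RH-cond}$_2$; this is exactly the third equation in \eqref{mono}, and by construction $h(\bar{v}_l)=h(\bar{v}_r)=0$.

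The crux is then to analyze the sign of $h$ on the interval $[\bar{v}_l,\bar{v}_r]$. Since $h'(v)=\sigma'(v)-s^2$, the Lax condition \eqref{degenerate} gives $h'(\bar{v}_l)\geq 0$ and $h'(\bar{v}_r)\leq 0$, so $h$ increases at the left endpoint and decreases at the right. To rule out interior zeros I would use the non-convexity structure \eqref{SC2}--\eqref{SC3}: $\sigma$ is concave for $v<0$ and convex for $v>0$ with inflection at $0$, so $\sigma'$ is strictly convex on $\R$ with a unique minimum at $0$. Consequently $h'(v)=\sigma'(v)-s^2$ is also strictly convex, hence has at most two zeros, which forces $h$ itself to have at most two critical points and therefore no interior zero on $(\bar{v}_l,\bar{v}_r)$; combined with $h'(\bar{v}_l)\geq 0$ this yields $h(v)>0$ on $(\bar{v}_l,\bar{v}_r)$.

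With $h>0$ on the open interval, the ODE $s\mu(v^S)'=h(v^S)$ is a scalar autonomous equation whose trajectory starting slightly to the right of $\bar{v}_l$ increases strictly monotonically and approaches $\bar{v}_r$ as $\xi\to+\infty$, while the time to reach $\bar{v}_l$ going backwards diverges because $h(\bar{v}_l)=0$; this gives a heteroclinic orbit connecting $\bar{v}_l$ to $\bar{v}_r$, with $(v^S)'>0$ throughout, proving the first line of \eqref{mono}. Existence follows. Uniqueness up to a shift in $\xi$ is the standard fact that two solutions of the same autonomous scalar ODE with the same image differ only by translation of the independent variable. I expect the delicate step to be precisely the sign analysis of $h$ in the non-convex regime: the naive argument that works for convex $\sigma$ (where $h$ is itself convex and hence has exactly two zeros with the right geometry) breaks down here, and one really has to exploit the inflection structure \eqref{SC2}--\eqref{SC3} together with \eqref{degenerate} to exclude a spurious interior zero of $h$ inside $(\bar{v}_l,\bar{v}_r)$.
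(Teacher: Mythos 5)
Your reduction to the scalar ODE $s\mu(v^S)'=h(v^S)$ via $u^S=-sv^S+(\overline{u}_l+s\overline{v}_l)$, and the phase-line conclusions (existence, monotonicity of both components, uniqueness up to translation, infinite transition time since $h$ is $C^1$ and vanishes at $\overline{v}_{l,r}$) follow exactly the classical route; the paper itself gives no proof and cites Section 3.1 of Kawashima--Matsumura, and the sign property of $h$ you need is precisely the content of Lemma \ref{4cond}, also quoted there. However, your justification of that sign property --- the step you yourself identify as the crux --- has a genuine gap. The chain ``$h'$ has at most two zeros, hence $h$ has at most two critical points, therefore $h$ has no interior zero on $(\overline{v}_l,\overline{v}_r)$'' is a non sequitur: a function vanishing at both endpoints with exactly one interior maximum followed by one interior minimum (two critical points) can perfectly well cross zero in between, and your final clause invokes only $h'(\overline{v}_l)\geq 0$, which does not rule out this shape.

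What rules it out is the right half of \eqref{degenerate}, namely $h'(\overline{v}_r)=\sigma'(\overline{v}_r)-s^2\leq 0$, which you state early on but never use in the deduction. The repair is short: if $h(v_1)=0$ for some $v_1\in(\overline{v}_l,\overline{v}_r)$, Rolle's theorem gives zeros $c_1\in(\overline{v}_l,v_1)$ and $c_2\in(v_1,\overline{v}_r)$ of $h'$; since $h'=\sigma'-s^2$ is strictly convex by \eqref{SC2}--\eqref{SC3}, it is strictly negative between its two distinct zeros $c_1<c_2$ and strictly positive outside, so $h'(\overline{v}_r)>0$, contradicting $\sigma'(\overline{v}_r)\leq s^2$. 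Hence $h$ has no interior zero; and since \eqref{degenerate} gives the strict inequality $h'(\overline{v}_l)=\sigma'(\overline{v}_l)-s^2>0$, $h$ is positive immediately to the right of $\overline{v}_l$, so by continuity $h>0$ on all of $(\overline{v}_l,\overline{v}_r)$ --- an argument that also covers the degenerate endpoint case $\sigma'(\overline{v}_r)=s^2$ allowed by \eqref{degenerate}. With this fix, the remainder of your proof goes through as written and coincides with the cited argument.
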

\begin{lemma}\label{4cond}
	(\cite{InitialModel})~Suppose that \eqref{SC} and \eqref{SC2} hold. Then there four conditions are equivalent to each other.
	\begin{enumerate}
		\item [(i)]When $\overline{v}_r > \overline{v}_l$, $h(v) > 0$ for $v \in (\overline{v}_l,\overline{v}_r).$
		\item [(ii)]$\sigma'(\overline{v}_r)\leq s^2,$ i.e., $\lambda(\overline{v}_r)\leq s$.
		\item [(iii)]$\sigma'(\overline{v}_r)\leq s^2<\sigma'(\overline{v}_l)$, i.e., $\lambda(\overline{v}_r)\leq s<\lambda(\overline{v}_l)$
		\item [(iv)]There exists uniquely a $v^* \in (\overline{v}_l,\overline{v}_r)$ with the following properties: $\sigma'(v^*)=s^2$ and 
		\begin{equation}\label{sbm}
			s^2>\sigma'(v) \quad \text{for} \quad v \in (v^*,\overline{v}_r) \qquad s^2<\sigma'(v) \quad \text{for} \quad v \in (\overline{v}_l,v^*)
		\end{equation}
	\end{enumerate}
		Moreover, if one of the above four conditions holds, then we must have $\overline{v}_l \neq 0$. In addition, $\overline{v}_r > \overline{v}_l$
and $v^*< 0$ hold when $\overline{v}_l< 0$.
\end{lemma}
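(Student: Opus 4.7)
The plan is to reduce everything to elementary calculus on the auxiliary function
\[
h(v)=\sigma(v)-\sigma(\overline{v}_r)-s^2(v-\overline{v}_r),
\]
which, using the definition of $a$ and the Rankine--Hugoniot relation \eqref{RH-cond}, coincides with the $h$ in \eqref{mono}. One reads off $h(\overline{v}_l)=h(\overline{v}_r)=0$, $h'(v)=\sigma'(v)-s^2$, and $h''(v)=\sigma''(v)$, so by \eqref{SC2} $h$ is strictly concave on $(-\infty,0)$ and strictly convex on $(0,+\infty)$; equivalently $\sigma'$ is strictly decreasing on $(-\infty,0)$ and strictly increasing on $(0,+\infty)$, with global minimum at $v=0$. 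Since $s^2$ is the secant slope of $\sigma$ on $[\overline{v}_l,\overline{v}_r]$, the mean value theorem gives some interior $c\in(\overline{v}_l,\overline{v}_r)$ with $\sigma'(c)=s^2$.

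I would then prove the cycle (ii)$\Rightarrow$(iv)$\Rightarrow$(i)$\Rightarrow$(iii)$\Rightarrow$(ii) by sign-and-case analysis. The implication (iii)$\Rightarrow$(ii) is tautological. For (ii)$\Rightarrow$(iv), the case $\overline{v}_l\ge 0$ is excluded because strict convexity of $\sigma$ on $[\overline{v}_l,\overline{v}_r]$ would give $\sigma'(\overline{v}_l)<s^2<\sigma'(\overline{v}_r)$, contradicting (ii); if $\overline{v}_r\le 0$, strict concavity yields $\sigma'(\overline{v}_l)>s^2>\sigma'(\overline{v}_r)$ with a unique $c=v^*<0$; in the mixed case $\overline{v}_l<0<\overline{v}_r$, strict increase of $\sigma'$ on $[0,\overline{v}_r]$ combined with $\sigma'(\overline{v}_r)\le s^2$ rules out $c\in[0,\overline{v}_r)$, forcing $c\in(\overline{v}_l,0)$, while strict decrease of $\sigma'$ on $(\overline{v}_l,0)$ simultaneously yields uniqueness of $v^*$, the strict inequality $\sigma'(\overline{v}_l)>s^2$, and the sign alternation in (iv). The step (iv)$\Rightarrow$(i) is then immediate: $h'>0$ on $(\overline{v}_l,v^*)$ and $h'<0$ on $(v^*,\overline{v}_r)$ make $h$ rise from $0$ to a positive maximum at $v^*$ and fall back to $0$, so $h>0$ on the open interval.

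The delicate step is (i)$\Rightarrow$(iii): the case $\overline{v}_l\ge 0$ is ruled out because $h$ would be convex on $[\overline{v}_l,\overline{v}_r]$ with vanishing endpoints, forcing $h\le 0$; the case $\overline{v}_r\le 0$ produces (iii) directly from strict concavity; in the mixed case $\overline{v}_l<0<\overline{v}_r$, positivity of $h$ in a right neighborhood of $\overline{v}_l$ together with $h(\overline{v}_l)=0$ forces $h'(\overline{v}_l)\ge 0$, and equality is excluded because the Taylor expansion $h(v)\sim\tfrac12\sigma''(\overline{v}_l)(v-\overline{v}_l)^2$ would be negative near $\overline{v}_l$ (since $\sigma''(\overline{v}_l)<0$), contradicting (i); symmetrically $h'(\overline{v}_r)\le 0$, which completes (iii). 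The ``moreover" part follows from the same case analysis: if $\overline{v}_l=0$ then $h$ is strictly convex on $[0,\overline{v}_r]$ with vanishing boundary values and so $h\le 0$, violating (i), while $\sigma'(0)<s^2<\sigma'(\overline{v}_r)$ violates (iii); the localization $v^*<0$ when $\overline{v}_l<0$ is already built into the proof of (ii)$\Rightarrow$(iv).

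The main obstacle I anticipate is keeping the mixed case $\overline{v}_l<0<\overline{v}_r$ clean: one must simultaneously exploit the monotonicity of $\sigma'$ on both sides of $0$ and the sign of $\sigma''$ at the endpoints in order to upgrade the weak one-sided derivative conditions coming from positivity of $h$ to the strict inequalities in (iii), and to rule out the scenario in which $h'$ has two interior zeros (one on each side of $0$), which would create an interior local minimum of $h$ lying strictly below $0$ and thereby contradict (i).
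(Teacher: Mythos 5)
Your proof cannot be compared against an internal argument, because the paper does not prove this lemma at all: it is quoted from Kawashima--Matsumura \cite{InitialModel}, with the proof explicitly deferred to Section~3.1 of that reference. Judged on its own terms, your elementary calculus argument is essentially correct. With $h(v)=\sigma(v)-\sigma(\overline{v}_r)-s^2(v-\overline{v}_r)$, which by \eqref{RH-cond} is the $h$ of \eqref{mono} and vanishes at both $\overline{v}_l$ and $\overline{v}_r$, the cycle (ii)$\Rightarrow$(iv)$\Rightarrow$(i)$\Rightarrow$(iii)$\Rightarrow$(ii) together with the three-way case analysis on the position of the inflection point $v=0$ does go through: the case $\overline{v}_l\geq 0$ is incompatible with (ii) (and with (i), since a strictly convex $h$ vanishing at both endpoints is negative inside, which also gives $\overline{v}_l\neq 0$); the case $\overline{v}_r\leq 0$ yields (iii) and (iv) directly from strict concavity; and in the mixed case the monotonicity of $\sigma'$ on either side of $0$ pins the mean-value point into $(\overline{v}_l,0)$, giving uniqueness of $v^*$, the sign pattern \eqref{sbm}, $v^*<0$, and the strict inequality $\sigma'(\overline{v}_l)>s^2$; your Taylor (or $h''=\sigma''<0$) argument at $\overline{v}_l$ correctly upgrades $h'(\overline{v}_l)\geq 0$ to a strict inequality in (i)$\Rightarrow$(iii), while only $h'(\overline{v}_r)\leq 0$ is needed at the right endpoint. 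The ``two interior zeros of $h'$'' scenario you worry about never needs to be excluded, since your cycle reaches (iv) from (ii) rather than from (i).

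The one genuine loose end is the ordering of $\overline{v}_l$ and $\overline{v}_r$. Your argument tacitly assumes $\overline{v}_l<\overline{v}_r$ throughout (mean value theorem on $(\overline{v}_l,\overline{v}_r)$, convexity/concavity on $[\overline{v}_l,\overline{v}_r]$), yet the final assertion of the lemma lists $\overline{v}_r>\overline{v}_l$ as a \emph{conclusion} when $\overline{v}_l<0$, so it is not available as a hypothesis; and without some orientation convention the literal statement is delicate (for $0\leq\overline{v}_r<\overline{v}_l$ strict convexity gives $\sigma'(\overline{v}_r)<s^2<\sigma'(\overline{v}_l)$, so (ii) and (iii) hold while (iv) fails, which is why the source states condition (i) with both orientations). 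To close this within your scheme, add the short observation that if (ii) holds and $\overline{v}_l<0$, then $\overline{v}_r<\overline{v}_l$ is impossible: both states would lie in $(-\infty,0)$, where $\sigma$ is strictly concave by \eqref{SC2}, forcing $\sigma'(\overline{v}_r)>s^2$ and contradicting (ii). This supplies the claim $\overline{v}_r>\overline{v}_l$ for $\overline{v}_l<0$, after which $v^*<0$ and $\overline{v}_l\neq 0$ follow from your case analysis exactly as you say.
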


The next lemma shows that periodic solutions to \eqref{VNC-Ion} have exponential decay. 
\begin{lemma}\label{decay}
	Assume that $(v_0,u_0)(x) \in H^k(0,\pi)$ with $k\geqq 2$ is periodic with period $\pi > 0$ and average $(\overline{v},\overline{u})$. Then there
	exists $\varepsilon_0>0$ such that if 
	\begin{align*}
		\varepsilon:=\norm{(v_0,u_0)-(\overline{v},\overline{u})}_{H^k(0,\pi)}\leq \varepsilon_0,
	\end{align*}
	the problem \eqref{VNC-Ion} with initial data $(v_0,u_0)$ admits a unique periodic solution
	\begin{align*}
		(v,u)(x,t)\in C(0,+\infty;H^k(0,\pi)),
	\end{align*}
	which has some period and average as $(v_0,u_0)$. Moreover, it holds that 
	\begin{equation}\label{decay1}
		\norm{(v,u)-(\overline{v},\overline{u})}_{H^k(0,\pi)}(t)\leq C\varepsilon e^{-\alpha t},~t\geqq 0,
	\end{equation}
	where the constant $C>0$ and $\alpha>0$ are independent of $\varepsilon$ and t.
\end{lemma}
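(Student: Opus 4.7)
The plan is to combine standard local well-posedness for the periodic Cauchy problem with a Kawashima--Shizuta type exponential decay estimate on the zero-mean perturbation. First, since \eqref{SC} makes the first-order part of \eqref{VNC-Ion} symmetrizable hyperbolic and the second equation carries parabolic dissipation in $u$, a standard contraction argument yields, for $\varepsilon$ small, a unique local solution $(v,u)\in C([0,T];H^k(0,\pi))$ with the same period $\pi$ as the initial data. Integrating each equation in $x$ over one period and using periodicity gives $\tfrac{d}{dt}\int_0^\pi v\,dx = \tfrac{d}{dt}\int_0^\pi u\,dx = 0$, so the averages $(\bar v,\bar u)$ are conserved. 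It therefore suffices to show exponential decay of the zero-mean perturbation $(V,U):=(v-\bar v,\,u-\bar u)$ which satisfies
\begin{equation*}
    \p_t V - \p_x U = 0, \qquad \p_t U - \p_x\bigl(\sigma(\bar v+V)-\sigma(\bar v)\bigr) = \mu\,\p^2_x U.
\end{equation*}

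Next I would derive an a priori bound of the form $\tfrac{d}{dt}\mathcal{E}_k(t) + c\,\mathcal{E}_k(t)\leq 0$ for an energy functional $\mathcal{E}_k$ equivalent to $\|(V,U)\|_{H^k(0,\pi)}^2$. The basic energy
\begin{equation*}
    E_0(t) := \tfrac12\int_0^\pi\!\Bigl(\Sigma(V) + U^2\Bigr)dx,\qquad \Sigma(V):=2\!\int_0^V\!\bigl(\sigma(\bar v+z)-\sigma(\bar v)\bigr)dz,
\end{equation*}
satisfies $\tfrac{d}{dt}E_0 = -\mu\int_0^\pi(\p_x U)^2\,dx$, providing dissipation only on $\p_x U$. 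To recover dissipation on $\p_x V$, I would introduce the cross term $\int_0^\pi U\,\p_x V\,dx$; its time derivative produces $\int_0^\pi\sigma'(\bar v+V)(\p_x V)^2\,dx$ plus remainders bounded by $\|\p_x U\|^2 + \|\p^2_x U\|^2$. For a sufficiently small $\eta>0$ the functional $\mathcal{E}_0 := E_0 + \eta\int_0^\pi U\,\p_x V\,dx$ is equivalent to $\|(V,U)\|_{L^2(0,\pi)}^2$ (by $\eqref{SC}$) and obeys
\begin{equation*}
    \tfrac{d}{dt}\mathcal{E}_0 + \kappa\!\int_0^\pi\!\bigl((\p_x V)^2 + (\p_x U)^2\bigr)dx \leq C\varepsilon\,\|(\p_x V,\p_x U)\|^2,
\end{equation*}
the last term being a cubic remainder absorbed by smallness of $\varepsilon$. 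Since both $V$ and $U$ have zero mean, the Poincar\'e inequality on $(0,\pi)$ gives $\|(V,U)\|_{L^2}\lesssim \|(\p_x V,\p_x U)\|_{L^2}$, so the dissipation controls $\mathcal{E}_0$ itself and yields exponential decay at the $L^2$ level.

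For the higher-order control needed by \eqref{decay1}, I would differentiate the equations $j$ times in $x$ for $j=1,\dots,k$ and assemble $\mathcal{E}_k$ from the same structure (pure energy plus cross-terms $\int_0^\pi \p_x^j U\,\p_x^{j+1}V\,dx$). Each derivative $\p_x^j V$, $\p_x^j U$ for $j\geq 1$ automatically has zero mean on the period, so Poincar\'e applies at every level; combined with Sobolev embedding on the bounded interval $(0,\pi)$ to control nonlinear remainders by $\varepsilon$ times the dissipation, one obtains $\tfrac{d}{dt}\mathcal{E}_k + c\,\mathcal{E}_k\leq 0$. Gr\"onwall then delivers $\mathcal{E}_k(t)\leq \mathcal{E}_k(0)e^{-ct}$, which is \eqref{decay1} after reverting to $(v,u)$, and simultaneously bootstraps the local solution to a global one.

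The main obstacle, in my view, is not constructing a dissipation mechanism---this is the classical hyperbolic--parabolic picture for viscoelasticity and the non-convexity \eqref{SC2}--\eqref{SC3} plays no role here because linearization at the single constant state $\bar v$ only sees $\sigma'(\bar v)>0$. The point that needs care is verifying that the constants $C$ and $\alpha$ in \eqref{decay1} are independent of $\varepsilon$. This requires tracking every constant through Sobolev, Poincar\'e, and Young's inequalities, and choosing $\eta$ and the smallness threshold $\varepsilon_0$ so as to close the standard a priori assumption $\sup_t\|(V,U)\|_{H^k(0,\pi)}\leq C\varepsilon_0$. On a bounded periodic interval with smooth stress this bookkeeping is routine but is the only nontrivial point of the argument.
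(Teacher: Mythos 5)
Your proposal is correct and follows essentially the same route the paper relies on: the paper omits the proof of Lemma \ref{decay} and refers to the analogous lemma in \cite{HuangYuanCMP}, which is proved exactly by this kind of argument---conservation of the periodic averages, an energy functional built from the pure energies plus small multiples of the cross terms $\int_0^\pi \p_x^jU\,\p_x^{j+1}V\,dx$, the Poincar\'e inequality for zero-mean periodic functions to convert dissipation into decay of the full norm, and absorption of the nonlinear remainders by the smallness of $\varepsilon$. Your observation that the non-convexity \eqref{SC2}--\eqref{SC3} is irrelevant here (only $\sigma'(\overline{v})>0$ enters) is also the right point.
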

The proof of Lemma \ref{decay} is similar to the one for compressible Navier-Stokes system studied in \cite{HuangYuanCMP}. We omit it. Here in below, we denote $(v_{l,r},u_{l,r})(x,t)$ to be the unique periodic solution to \eqref{VNC-Ion} with the periodic initial data 
\begin{align*}
	(v_{l,r},u_{l,r})(x,0)=(\overline{v}_{l,r},\overline{u}_{l,r})+(\phi_{0l,0r},\psi_{0l,0r})(x),
\end{align*}
where $(\phi_{0l,0r},\psi_{0l,0r})$ is given in \eqref{cond-farfield}. 

\subsection{Ansatz and its equations}

Now we construct the ansatz, and write their equations. For a viscous shock profile $(v^S,u^S)(x-st)$ solving \eqref{profile}, let
\begin{equation}\label{gfuntion}
	g(x):=\frac{v^S(x)-\avl}{\avr-\avl}=\frac{u^S(x)-\aul}{\aur-\aul},
\end{equation}
where the equality can follow easily from \eqref{RH-cond} and \ref{profile}. It is straightforward to check from Lemma \ref{mono} that
$0<g(x)<1$ and $g'(x)>0$ for all $x \in \R$.

For any function $f(x)$ and the shift curve $\xi(t)$, we denote $f_\xi$ to be the shifted function
\begin{align*}
	f_\xi(x):=f(x-\xi(t)) \quad \text{with the derivatives}~f^{(k)}_\xi(x):=f^{(k)}(x-\xi(t)),~k\geqq 1.
\end{align*} 
As in \cite{HuangYuanCMP}, we construct the ansatz as follows,
	\begin{equation}\label{ansatz}
		\begin{aligned}
			\vt(x,t):=v_l(x,t)(1-g_{st+\X(t)}(x))+v_r(x,t)g_{st+\X(t)}(x),\\
			\ut(x,t):=u_l(x,t)(1-g_{st+\Y(t)}(x))+u_r(x,t)g_{st+\Y(t)}(x),
		\end{aligned}
	\end{equation}
where $\X(t)$ and $\Y(t)$ are two $C^1$ curves on $[0,+\infty)$ to be determined. Note that $(\vt,\ut)$ approaches the periodic solution 
$(v_l,u_l)$ (resp.$(v_r,u_r)$) as $x\to -\infty$ (resp.$+\infty$), thus we expect the ansatz $(\vt,\ut)$ carries same oscillations with the
solution $(v,u)$ at far fields, i.e. $(v-\vt,u-\ut)(x,t) \to 0$ as $|x| \to +\infty$ for all $t\geqq 0$.

By plugging the ansatz $(\vt,\ut)$ into \eqref{VNC-Ion}, we have the following system
\begin{equation}\label{VNC-T}
	\begin{cases}
		\p_t \vt-\p_x \ut=-\p_x F_1-f_2,\\
		\p_t\ut-\p_x\sigma(\vt)-\mu\p^2_x \ut=-\p_xF_3-f_4,
	\end{cases}
\end{equation}
where
\begin{equation}\label{F1234}
	\begin{aligned}
		&F_1=(u_r-u_l)(g_{st+\Y}-g_{st+\X}),\\
		&f_2=(u_r-u_l)g'_{st+\X}+(v_r-v_l)g'_{st+\X}(s+\X'),\\
		&F_3=(\sigma(\vt)-\sigma(v_l))(1-g_{st+\Y})+(\sigma(\vt)-\sigma(v_r))g_{st+\Y}+\mu(u_r-u_l)g'_{st+\Y},\\
		&f_4=(u_r-u_l)g'_{st+\Y}(s+\Y')+(\mu\p_xu_r-\mu\p_xu_l)g'_{st+\Y}+(\sigma(v_r)-\sigma(v_l))g'_{st+\Y}.
	\end{aligned}
\end{equation}
It should be remarked that both $F_1(x,t)$ and $F_3(x,t)$ vanish as $|x| \to +\infty$ for any $t\geqq 0$ since $\vt-v_l=(v_r-v_l)g_{st+\X}$ and $\vt-v_r=-(v_r-v_l)(1-g_{st+\X})$.

In order to use the anti-derivative method, we need to find appropriate shift curves $\X(t)$ and $\Y(t)$ so that  
\begin{equation}\label{zero-mass}
	\int_{\R}(v-\vt,u-\ut)(x,t)  \,dx=0 \quad \text{for all}~t\geqq 0. 
\end{equation}
Note that \eqref{VNC-Ion} and \eqref{VNC-T} yield  
\begin{equation}\label{zero-mass2}
	\frac{d}{dt}\int_{\R}(v-\vt,u-\ut) (x,t) \,dx=\int_{\R}(f_2,f_4)(x,t) \,dx. 
\end{equation}
Thus, \eqref{zero-mass} holds if
\begin{equation}\label{zero-mass3}
	\int_{\R} (f_2,f_4)(x,t) \,dx=0, \quad t>0, 
\end{equation}
and
\begin{equation}\label{zero-mass4}
	\int_{\R}(v_0(x)-\vt(x,0),u_0(x)-\ut(x,0))  \,dx =0.
\end{equation}
It follows from \eqref{zero-mass3} that $\X$ and $\Y$ satisfy
\begin{equation}\label{XY}
	\begin{cases}
		\begin{aligned}
			&\X'(t)=-s-\frac{\int_{\R}(u_r-u_l)g'_{st+\X}(x)  \,dx }{\int_{\R}(v_r-v_l)g'_{st+\X}(x)  \,dx },\\
			&\Y'(t)=-s-\frac{\int_{\R}[\sigma(v_r)-\sigma(v_l)+\mu\p_x u_r-\mu\p_x u_l]g'_{st+\Y}(x)\,dx }{\int_{\R}(u_r-u_l)g'_{st+\Y}(x) \,dx}.
		\end{aligned}
	\end{cases}
\end{equation}
While \eqref{zero-mass4} yields that $\X(0)=\X_0$ and $\Y(0)=\Y_0$ satisfy
\begin{equation}\label{XY1}
	\begin{cases}
		\begin{aligned}
				&\mathcal{A}_1(\X_0)+ \frac{1}{\avr-\avl}\left\{\int_{-\infty}^{0}(v_0-v^S-\phi_{0l})  \,dx +\int_{0}^{+\infty}(v_0-v^S-\phi_{0r})  \,dx \right\}=0,\\
				&	\mathcal{A}_2(\Y_0)+ \frac{1}{\aur-\aul}\left\{\int_{-\infty}^{0}(u_0-u^S-\psi_{0l})  \,dx +\int_{0}^{+\infty}(u_0-u^S-\psi_{0r})  \,dx \right\}=0,
		\end{aligned}
	\end{cases}
\end{equation}
where
\begin{equation}\label{XY2}
	\begin{cases}
		\begin{aligned}
			\mathcal{A} _1(\X_0):=\X_0+\frac{1}{\avr-\avl}\left\{\int_{-\infty}^{0}(\phi_{0l}-\phi_{0r})g_{\X_0}  \,dx-\int_{0}^{+\infty}(\phi_{0l}-\phi_{0r})(1-g_{\X_0})  \,dx\right\},\\
			\mathcal{A} _2(\Y_0):=\Y_0+\frac{1}{\aur-\aul}\left\{\int_{-\infty}^{0}(\psi_{0l}-\psi_{0r})g_{\Y_0}  \,dx-\int_{0}^{+\infty}(\psi_{0l}-\psi_{0r})(1-g_{\Y_0})  \,dx\right\}.
		\end{aligned}
	\end{cases}
\end{equation}
The existence and exponentially decay to limiting location properties of shifts $\X(t),\Y(t)$ are given by the following lemma.
\begin{lemma}\label{XYdecay}
	Assume that the periodic perturbations $\phi_{0l},\phi_{0r},\psi_{0l}$ and $\psi_{0r}$ satisfy \eqref{0-A}. Then there exists $\varepsilon_0>0$ such that
	if 
	\begin{align}\label{small assumption for shifts}
		\varepsilon:=\norm{\phi_{0l},\psi_{0l}}_{H^2(0,\pi_l)}+\norm{\phi_{0r},\psi_{0r}}_{H^2(0,\pi_r)}\leq \varepsilon_0,
	\end{align}
	there exists a unique solution $(\X,\Y)(t) \in C^1[0,+\infty)$ to \eqref{XY} with the initial data $(\X_0,\Y_0)$. Moreover, the solution satisfies that
	\begin{center}
		\begin{minipage}[c]{7cm}
			$ |\X'(t)|+\left|\mathcal{X}(t)-\mathcal{X}_{\infty}\right| \leq  C \varepsilon e^{-\alpha t},$ \\
		  $|\Y'(t)|+\left|\mathcal{Y}(t)-\mathcal{Y}_{\infty}\right| \leq C \varepsilon e^{-\alpha t},$
		\end{minipage}
		\begin{minipage}[c]{2cm} 
			$ t \geqq 0,$
		\end{minipage}
	\end{center}
	where the constants $C>0$ and $\alpha>0$ are independent of $\varepsilon$ and t, and the shifts $\X_{\infty}$ and $\Y_{\infty}$ are given by $\X_{\infty}=\mathcal{A}_1(\X_0)+\mathcal{C} _1$
	and $\Y_{\infty}=\mathcal{A} _2(\Y_0)+\mathcal{C} _2$, with
	\begin{equation}\label{C1}
		\mathcal{C} _1:=\frac{1}{\avr-\avl}\left\{\frac{1}{\pi_l}\int_{0}^{\pi_l} \int_{0}^{x} \phi_{0l} \,dy  \,dx -\frac{1}{\pi_r}\int_{0}^{\pi_r} \int_{0}^{x} \phi_{0r} \,dy  \,dx\right\},
	\end{equation}
	\begin{equation}\label{C2}
		\begin{aligned}
		&\mathcal{C}_2:=\frac{1}{\aur-\aul}\left\{\frac{1}{\pi_l}\int_{0}^{\pi_l} \int_{0}^{x} \psi_{0l}(y) \,dy  \,dx+\int_{0}^{+\infty}\frac{1}{\pi_l}\int_{0}^{\pi_l}[\sigma(v_l(x,t))-\sigma(\avl)] \,dx \,dt\right.\\ 
		&\left.-\frac{1}{\pi_r}\int_{0}^{\pi_r} \int_{0}^{x} \psi_{0r}(y) \,dy  \,dx-\int_{0}^{+\infty}\frac{1}{\pi_r}\int_{0}^{\pi_r}[\sigma(v_r(x,t))-\sigma(\avr)] \,dx \,dt\right\}
	    \end{aligned}
	\end{equation}
\end{lemma}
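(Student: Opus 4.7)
The plan is to view \eqref{XY} as a coupled system of $C^1$ ODEs for $(\X,\Y)(t)$ whose right-hand sides depend smoothly on $(\X,\Y)$ through the shifted kernels $g'_{st+\X}, g'_{st+\Y}$, and to close the estimates by combining the Rankine--Hugoniot cancellation with Lemma \ref{decay}. The initial values $(\X_0,\Y_0)$ determined by \eqref{XY1} are themselves well-defined for $\varepsilon_0$ small by an implicit function theorem argument, since each $\mathcal{A}_i$ in \eqref{XY2} is a perturbation of the identity map when the $\phi_{0*}, \psi_{0*}$ are small. Next, since $\|g'\|_{L^1(\R)} = g(+\infty)-g(-\infty) = 1$ and $v_{l,r}(\cdot,t), u_{l,r}(\cdot,t)$ stay within $O(\varepsilon)$ of their constant averages by Lemma \ref{decay}, the denominators in \eqref{XY} are within $O(\varepsilon)$ of $\avr-\avl$ and $\aur-\aul$ respectively, and therefore bounded away from zero uniformly in $t\geqq 0$. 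The right-hand sides are then Lipschitz in $(\X,\Y)$, and the Picard--Lindel\"of theorem produces a unique local $C^1$ solution starting from $(\X_0,\Y_0)$.

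The heart of the argument is to prove $|\X'(t)|,|\Y'(t)|\leqq C\varepsilon e^{-\alpha t}$, which simultaneously rules out finite-time degeneration of the denominators. Multiplying the first equation in \eqref{XY} by its denominator rearranges to
$$\X'(t)\int_{\R}(v_r-v_l)g'_{st+\X}(x)\,dx = -\int_{\R}\bigl[s(v_r-v_l)+(u_r-u_l)\bigr]g'_{st+\X}(x)\,dx.$$
Writing $v_{l,r}=\overline{v}_{l,r}+\tilde v_{l,r}$ and $u_{l,r}=\overline{u}_{l,r}+\tilde u_{l,r}$, the constant contribution to the integrand vanishes by the Rankine--Hugoniot relation $s(\avr-\avl)+(\aur-\aul)=0$, leaving only $\int_\R[s(\tilde v_r-\tilde v_l)+(\tilde u_r-\tilde u_l)]g'_{st+\X}\,dx$, which is bounded by $C\varepsilon e^{-\alpha t}$ via Lemma \ref{decay}. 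Dividing by the bounded-below denominator yields the bound on $\X'(t)$. The case of $\Y'$ is parallel, using the second Rankine--Hugoniot identity $s(\aur-\aul)+\sigma(\avr)-\sigma(\avl)=0$ together with the Taylor expansion $\sigma(v_{l,r})-\sigma(\overline{v}_{l,r})=\sigma'(\overline{v}_{l,r})\tilde v_{l,r}+O(\tilde v_{l,r}^{\,2})$, and the $H^2$ control on $\mu\partial_x u_{l,r}$ from Lemma \ref{decay}. The resulting a priori bound allows continuation of $(\X,\Y)$ to $[0,+\infty)$; $L^1$-integrability of $\X',\Y'$ then produces the limits $\X_\infty,\Y_\infty$ and the bound $|\X(t)-\X_\infty|+|\Y(t)-\Y_\infty|\leqq C\varepsilon e^{-\alpha t}$ via $\X(t)-\X_\infty=-\int_t^{+\infty}\X'(\tau)\,d\tau$.

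The last step is to identify the explicit expressions $\X_\infty=\mathcal{A}_1(\X_0)+\mathcal{C}_1$ and $\Y_\infty=\mathcal{A}_2(\Y_0)+\mathcal{C}_2$ by evaluating $\int_0^{+\infty}\X'(t)\,dt$ and $\int_0^{+\infty}\Y'(t)\,dt$ in closed form. The key observation is that, since $\tilde v_{l,r}(\cdot,t)$ is periodic with zero spatial mean (because the periodic flow preserves averages), its $x$-antiderivative $\int_0^x\tilde v_{l,r}(y,t)\,dy$ is itself periodic; an integration by parts in $x$ transfers the derivative off $g'_{st+\X}$, and after exchanging the order of $t$- and $x$-integration and exploiting $g(-\infty)=0, g(+\infty)=1$, the leading linear-in-$\varepsilon$ contribution collapses to the averaged antiderivatives $\frac{1}{\pi_{l,r}}\int_0^{\pi_{l,r}}\int_0^x\phi_{0l,0r}(y)\,dy\,dx$ appearing in $\mathcal{C}_1$, together with the $\phi_{0l,0r}$-dependent pieces that make up $\mathcal{A}_1(\X_0)-\X_0$. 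The same scheme works for $\Y$, except that the nonlinear residual $\sigma(v_{l,r})-\sigma(\overline{v}_{l,r})$ has nonzero period-average of size $O(\varepsilon^2 e^{-2\alpha t})$; its accumulation in time produces exactly the terms $\int_0^{+\infty}\frac{1}{\pi_{l,r}}\int_0^{\pi_{l,r}}[\sigma(v_{l,r})-\sigma(\overline{v}_{l,r})]\,dx\,dt$ appearing in $\mathcal{C}_2$. I expect this last bookkeeping for $\Y$ to be the main obstacle: the full nonlinear $\sigma$-correction must be carried as a double time--space integral, its absolute convergence justified by exponential decay, and one must verify that no other residual terms survive the $t\to+\infty$ limit.
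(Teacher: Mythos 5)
Your opening steps are essentially the paper's: the implicit-function-theorem determination of $(\X_0,\Y_0)$ from \eqref{XY1}, the Cauchy--Lipschitz argument for \eqref{XY} with denominators kept within $O(\varepsilon)$ of $\avr-\avl$ and $\aur-\aul$, and the decay $|\X'(t)|,|\Y'(t)|\le C\varepsilon e^{-\alpha t}$ obtained from the Rankine--Hugoniot cancellations \eqref{RH-cond} combined with Lemma \ref{decay}; integrating $\X',\Y'$ then gives the existence of the limits and the exponential convergence. All of this is correct.

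The genuine gap is in the identification of $\X_\infty$ and $\Y_\infty$, which is the actual content of \eqref{C1}--\eqref{C2}. These formulas are \emph{exact}: $\X_\infty-\X_0$ equals $\mathcal{A}_1(\X_0)-\X_0+\mathcal{C}_1$ with no $O(\varepsilon^2)$ remainder, and $\Y_\infty$ contains genuinely quadratic contributions (the time-integrated period averages of $\sigma(v_{l,r})-\sigma(\overline v_{l,r})$, which are $O(\varepsilon^2 e^{-2\alpha t})$ pointwise in time). Hence computing $\int_0^{+\infty}\X'\,dt$ by retaining ``the leading linear-in-$\varepsilon$ contribution'' cannot close: expanding the ratio in \eqref{XY} leaves shift-dependent quadratic residuals (numerator fluctuation times denominator fluctuation, both $O(\varepsilon e^{-\alpha t})$) whose exact cancellation must be exhibited, and this issue already arises for $\X$, not only for $\Y$ where you flag it as an unresolved obstacle. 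There is also an integrability point you pass over: $\int_{\R}(\tilde v_l-\tilde v_r)g_{st+\X}\,dx$ does not converge, which is why the split at $x=0$ with weights $g$ and $-(1-g)$ --- precisely the structure of $\mathcal{A}_{1,2}$ in \eqref{XY2} --- is forced. The missing mechanism (the paper realizes it by integrating the conservation laws over the slanted space-time domains $\Omega^N_{x,t}$, following Huang--Yuan) can equivalently be phrased as tracking the corrected quantity
\begin{equation*}
\mathcal{B}(t):=\X(t)+\frac{1}{\avr-\avl}\Big\{\int_{-\infty}^{0}(\tilde v_l-\tilde v_r)\,g_{st+\X}\,dx-\int_{0}^{+\infty}(\tilde v_l-\tilde v_r)\,(1-g_{st+\X})\,dx\Big\},\qquad \tilde v_{l,r}:=v_{l,r}-\overline{v}_{l,r},
\end{equation*}
which equals $\mathcal{A}_1(\X_0)$ at $t=0$ and tends to $\X_\infty$. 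Differentiating and using \eqref{XY} \emph{exactly} (not perturbatively), every interior term cancels and one is left with $\mathcal{B}'(t)=(\tilde u_l-\tilde u_r)(0,t)/(\avr-\avl)$; only at this stage does your periodic-antiderivative/mean-conservation trick enter, giving $\int_0^{+\infty}(\tilde u_l-\tilde u_r)(0,t)\,dt$ exactly equal to the double integrals in $\mathcal{C}_1$. The analogous functional built from $\tilde u_{l,r}$ and the $\Y$-equation, together with $\p_t u_{l,r}=\p_x(\sigma(v_{l,r})+\mu\p_x u_{l,r})$, produces the boundary term $\sigma(v_{l,r})(0,t)-\sigma(\overline v_{l,r})+\mu\p_x u_{l,r}(0,t)$ whose time integral yields both the $\psi$-antiderivative terms and the $\sigma$-average terms of $\mathcal{C}_2$. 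Without this exact-cancellation step, your proposal proves existence, uniqueness and the decay estimates, but not the stated formulas for $\X_\infty$ and $\Y_\infty$.
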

\begin{pf}
	The proof of Lemma follows from the argument as in \cite{HuangYuanCMP} for compressible Navier-Stoke equations. We only sketch the idea and refer the readers to \cite{HuangYuanCMP} for details. 
	
	Firstly, since \eqref{XY2} yields that 
	\begin{align*}
		|\mathcal{A}_1'(\X_0)-1 |\leq \frac{\norm{\phi_{0l},\phi_{0r}}_{L^{\infty}(\R)}}{|\avr-\avl|}\norm{g'}_{L^1(\R)}=\frac{\norm{\phi_{0l},\phi_{0r}}_{L^{\infty}(\R)}}{|\avr-\avl|},
	\end{align*}
	and
	\begin{align*}
		|\mathcal{A}_2'(\Y_0)-1 |\leq \frac{\norm{\psi_{0l},\psi_{0r}}_{L^{\infty}(\R)}}{|\aur-\aul|},
	\end{align*}
	we can apply the implicit function theorem to uniquely determine $(\X_0,\Y_0)$ satisfying \eqref{XY1} provided $\varepsilon_0$ small enough. 
	
	Then, thanks to Lemma \ref{decay} and $\varepsilon_0$ small enough, the existence and uniqueness of $(\X,\Y)$ can follow from the Cauchy-Lipschitz theorem. Moreover, the
	exponential decay of $\X',\Y'$ holds.
	
	Finally, $\X_\infty,\Y_\infty$ can be obtained by the integration by parts over the special domain depending $\X$ or $\Y$ respectively, for example, considering, for any fixed $x\in[0,1],t>0$ and integer $N>0$, 
	\begin{align*}
		\Omega ^N_{x,t}:=\{(y,\tau):0<\tau<t,\quad \Gamma ^N_l(\tau)<y<\Gamma ^N_r(\tau)\}
	\end{align*}
	with $\Gamma ^N_l(\tau):=s\tau+\Y(\tau)+(-N+x)\pi_l,\quad
		\Gamma ^N_r(\tau):=s\tau+\Y(\tau)+(N+x)\pi_r.$ 
\end{pf}

Note that as $t \to +\infty$, the ansatz $(\vt,\ut)$ in \eqref{ansatz} tends to $(v^S(x-st-\X_{\infty}),u^S(x-st-\Y_{\infty}))$. This pair is a
traveling wave solution to \eqref{VNC-Ion} if and only if 
\begin{equation}\label{x=y}
	\X_{\infty}=\Y_{\infty},
\end{equation}
which unfortunately does not hold for generic initial perturbations. In order to ensure
that \eqref{x=y} holds, we assume additionally that the initial data $(v_0,u_0)$ satisfies \eqref{x=y}. In fact,
by plugging \eqref{XY1} into \eqref{C1} and \eqref{C2}, respectively,  \eqref{x=y} is equivalent to the following zero-mass type condition
\begin{equation}\label{new0mass}
	\begin{aligned}
		&s\left\{\int_{-\infty}^{0}(v_0-v^S-\phi_{0l})(x)\,dx+\int_{0}^{+\infty} (v_0-v^S-\phi_{0r})(x)\,dx\right.\\
		&\left.-\frac{1}{\pi_l}\int_{0}^{\pi_l}\int_{0}^{x} \phi_{0l}(y) \,dy \,dx+\frac{1}{\pi_r}\int_{0}^{\pi_r}\int_{0}^{x} \phi_{0r}(y) \,dy \,dx\right\}\\
		&=-\int_{-\infty}^{0}(u_0-u^S-\psi_{0l})(x)\,dx-\int_{0}^{+\infty}(u_0-u^S-\psi_{0r})(x)\,dx\\
		&+\frac{1}{\pi_l}\int_{0}^{\pi_l}\int_{0}^{x} \psi_{0l}(y) \,dy \,dx+\int_{0}^{+\infty}\frac{1}{\pi_l}\int_{0}^{\pi_l}[\sigma(v_l(x,t))-\sigma(\avl)] \,dx \,dt\\
		&-\frac{1}{\pi_r}\int_{0}^{\pi_r}\int_{0}^{x} \psi_{0r}(y) \,dy \,dx-\int_{0}^{+\infty}\frac{1}{\pi_r}\int_{0}^{\pi_r}[\sigma(v_r(x,t))-\sigma(\avr)] \,dx \,dt.
	\end{aligned}
\end{equation}
It should be remarked that when all the periodic perturbations $\phi_{0l},\phi_{0r},\psi_{0l}$ and $\psi_{0r}$ vanish, \eqref{new0mass} reduces to
	\begin{align*}
		s\int_{\R}(v_0-v^S)\,dx+\int_{\R}(u_0-u^S)\,dx=0.
	\end{align*}
    This condition is equivalent to that there exists a (unique) constant $x_0 \in \R$ such that 
    \begin{align*}
		\int_{\R} (v_0(x)-v^S(x-x_0)) \,dx=0, \quad  \int_{\R} (u_0(x)-u^S(x-x_0)) \,dx=0,
    \end{align*}
    which is indeed the zero-mass condition given in \cite{Goodman.1986,zero-mass}. 


\subsection{Main Result}
With the ansatz constructed above, we define the anti-derivative of the difference
between the initial data \eqref{initial} and ansatz,
\begin{equation}\label{initialansatz}
	\Phi_0(x):=\int_{-\infty}^{x}(v_0(y)-\vt(y,0))  \,dy,\quad \Psi_0(x):=\int_{-\infty}^{x}(u_0(y)-\ut(y,0))  \,dy.
\end{equation}
Set
\begin{equation}\label{E0}
	E_0:=\norm{\phi_{0l},\psi_{0l}}_{H^3(0,\pi_l)}+\norm{\phi_{0r},\psi_{0r}}_{H^3(0,\pi_r)}+\norm{\Phi_0}_{H^2(\R)}+\norm{\Psi_0}_{H^2(\R)}.
\end{equation}
Then, the main result of this paper can be stated as follows.
\begin{theorem}\label{mainresult}
	Assume that the periodic perturbations in \eqref{cond-farfield} satisfy \eqref{0-A} and \eqref{new0mass}. Then there exists $\varepsilon_0>0$ such that
	, if $E_0\leq \varepsilon_0$, there admits a unique global solution of \eqref{VNC-Ion} and \eqref{initial}, satisfying
\begin{equation}\label{regularity}
		\begin{aligned}
		v-\vt \in C\Big([0,+\infty);H^1(\R)\Big)\cap L^2\Big((0,+\infty);H^1(\R)\Big),\\
		u-\ut \in C\Big([0,+\infty);H^1(\R)\Big)\cap L^2\Big((0,+\infty);H^2(\R)\Big),
	\end{aligned}
\end{equation}
	and 
	\begin{equation}\label{asymptotic}
		\norm{(v,u)(\cdot,t)-(v^S,u^S)(\cdot-st-\X_{\infty})}_{L^{\infty}(\R)}\to 0 \quad \text{as}~t \to +\infty,
	\end{equation}
	with the constant shift $\X_{\infty}(=\Y_{\infty})$ given by
	\begin{align*}
		\X_{\infty}=-\frac{1}{\avr-\avl}\left\{\int_{-\infty}^{0}(v_0-v^S-\phi_{0l})(x)  \,dx+\int_{0}^{+\infty}(v_0-v^S-\phi_{0r})(x)  \,dx  \right\}+\mathcal{C}_1,
	\end{align*}
	where $\mathcal{C} _1$ is given in \eqref{C1}.
\end{theorem}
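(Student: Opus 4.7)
\textbf{Proof plan for Theorem \ref{mainresult}.} The strategy is the classical anti--derivative/continuation scheme adapted to the present non-convex, large-amplitude, periodically-perturbed setting. First I would exploit the zero-mass type condition \eqref{new0mass} and Lemma \ref{XYdecay} to guarantee that the ansatz $(\vt,\ut)$ from \eqref{ansatz} satisfies $\X_\infty=\Y_\infty$, so that $(v-\vt,u-\ut)\in H^1(\R)$ for all $t\geq 0$ and possesses well-defined anti-derivatives
\begin{equation*}
	\ph(x,t):=\int_{-\infty}^{x}(v-\vt)(y,t)\,dy,\qquad \w(x,t):=\int_{-\infty}^{x}(u-\ut)(y,t)\,dy.
\end{equation*}
Integrating \eqref{VNC-Ion} minus \eqref{VNC-T} once in $x$, I obtain an integrated system for $(\ph,\w)$ whose inhomogeneous source terms contain only $(F_1,F_3)$ (no boundary contribution thanks to periodic decay of Lemma \ref{decay}) and terms involving the shift derivatives $\X',\Y'$, all of which decay exponentially in $t$ by Lemma \ref{XYdecay}. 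Local well-posedness in $H^2(\R)$ follows from standard parabolic theory, so the proof reduces to uniform a priori estimates on $(\ph,\w)$ under the small-perturbation hypothesis.

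The central difficulty is to establish the lower-order $L^2$ estimate in the presence of (i) the non-convexity \eqref{SC2}--\eqref{SC3} and (ii) the large shock amplitude. As announced in the introduction, I would work in the effective-velocity variables $m:=u-\mu v$, rewriting the integrated system as a transformed system in $(\ph,\whh)$ (where $\whh$ is the antiderivative of $u-\mu v-(\ut-\mu\vt)$) so that the dissipation appears diagonally, as in \cite{Yao.2016,HehuangJDE}; this handles the large amplitude. To counteract the sign-changing factor $\sigma''(v^S)$ that appears in the cross term from $\sigma(v)-\sigma(v^S)=\sigma'(v^S)(v-v^S)+\tfrac{1}{2}\sigma''(v^S)(v-v^S)^2+\cdots$, I would split the shock profile along $\vsx\in[\avl,0]$ and $\vsx\in[0,\avr]$, and on each piece construct, following the scheme of \cite{Gaijin,InitialModel}, a piecewise-smooth transform function and a weight function tailored to that subinterval, extending them slightly past $0$ into $[\avl,v_d]$ and $[v_{-1},\avr]$ to allow overlap. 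Performing the transformed-weighted energy estimate on each piece and summing yields the basic estimate, provided the jumps of the flux at the gluing points $v_d,v_{-1}$ are non-positive --- this compatibility condition on the jumps is precisely what my construction of the weight/transform has to verify (this is the flux computation the paper defers to an appendix, which I would carry out carefully). This gives
\begin{equation*}
	\norm{(\ph,\whh)(t)}^2+\int_0^t\bigl(\norm{\sqrt{|(v^S)'|}\,(\ph,\whh)}^2+\norm{(\ph_x,\whh_x)}^2\bigr)\,d\tau\leq C E_0^2+C\int_0^t e^{-\alpha\tau}d\tau,
\end{equation*}
after absorbing the exponentially decaying source terms.

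Once the lower-order bound is in hand, I would return to the original integrated system \eqref{system2} (in $(\ph,\w)$ rather than the transformed variables) to derive $H^1$-estimates for $(\ph_x,\w_x)$ by standard energy methods --- here no weight is needed because the non-convex obstruction only affects the lowest order term. Combining, I obtain a closed a priori estimate
\begin{equation*}
	\supt\norm{(\ph,\w)(t)}_{2}^2+\int_0^{\mathcal{T}}\bigl(\norm{\ph_x}_1^2+\norm{\w_x}_2^2\bigr)\,dt\leq C E_0^2,
\end{equation*}
which, together with local existence, extends the solution globally and yields the regularity \eqref{regularity}. The asymptotic behavior \eqref{asymptotic} then follows by a standard argument: from $\ph_x,\w_x\in L^2(0,\infty;L^2)$ together with uniform bounds on $\p_t\ph_x,\p_t\w_x$ one deduces $\norm{(\ph_x,\w_x)(t)}\to 0$, and Sobolev embedding plus Lemma \ref{decay} and Lemma \ref{XYdecay} upgrade this to $L^\infty$ convergence of $(v,u)$ to $(v^S,u^S)(\cdot-st-\X_\infty)$. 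The main obstacle I anticipate is exactly step two above: designing the piecewise transform and weight so that the interior energy identity on each subinterval produces a non-negative quadratic form while the jumps at the interior gluing points contribute with the correct sign --- all without any smallness assumption on $\avr-\avl$.
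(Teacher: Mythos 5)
Your overall strategy is the same as the paper's: the ansatz with two shifts and the condition \eqref{new0mass} forcing $\X_\infty=\Y_\infty$, anti-derivatives, passage to the effective velocity, a piecewise transform/weight construction split at $\vsx=0$ (extended slightly into $[\avl,v_d]$ and $[v_{-1},\avr]$) with non-positive flux jumps at the non-smooth points, then higher-order estimates, continuation, and the standard decay argument for \eqref{asymptotic}. There is, however, a genuine gap at the first-order derivative step. You claim that after the lower-order bound the $H^1$ estimate can be run on \eqref{system2} ``with no weight, because the non-convex obstruction only affects the lowest order term.'' This is not correct: when you multiply the equations by $-\p_\xi^2$ of the unknowns and integrate by parts, the coupling produces a cross term of the form $\sigma''(\vsx)(\vsx)'\,\p_\xi\Phi\,\p_\xi W$ (or $\sigma''(\vsx)(\vsx)'\,\p_\xi\Phi\,\p_\xi\Psi$ if you work with \eqref{system2}), whose coefficient is of order one for a large-amplitude shock and changes sign. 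At that stage neither $\int_0^\T\norm{\p_\xi W}^2\,dt$ nor $\int_0^\T\norm{\p_\xi\Psi}^2\,dt$ is available --- in the paper these are only obtained in Lemma \ref{lowterm3} (see \eqref{W1}, \eqref{psi1}) \emph{after} $\int_0^\T\norm{\p^2_\xi\Phi}^2\,dt$ has been established --- the $W$-equation in \eqref{mainsystem} carries no viscosity, and on $\R$ you cannot absorb a quadratic $\p_\xi W$ (or $\p_\xi\Psi$) term into the dissipation $\mu(\p_\xi^2\Phi)^2$ (resp.\ $\mu(\p_\xi^2\Psi)^2$). This is precisely why the paper introduces the additional monotone weight $\wt$ of \eqref{newweight} in Lemma \ref{lowterm2}: since $\wt'\gtrsim|\sigma''|$ on $[\avl,\avr]$, the identity \eqref{relt2} produces the good term $\tfrac{s}{2}(\vsx)'\wt'(\p_\xi W)^2$, which absorbs the bad cross term through \eqref{cauchyine}. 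Your plan either needs such a weight, or an extra intermediate estimate (e.g.\ a zeroth-order estimate on \eqref{system2} yielding $\int_0^\T\norm{\p_\xi\Psi}^2\,dt$, using the already-proved bound on $\iint(\vsx)'|\sigma''|W^2$) before the first-order step; as written, that step does not close.

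A secondary, smaller issue: the lower-order inequality you announce overstates what the transformed weighted estimate delivers. Since the $W$-equation has no dissipation, you do not get $\int_0^t\norm{\whh_x}^2$, and the zero-order control is only $\iint(\vsx)'|\sigma''(\vsx)|\w^2$ together with $\iint(\vsx)'Y(s\ph+\w)^2$ and $\iint X\ph^2$, where the construction on $[\avl,0]$ has $X=Y=0$; so the full weighted control $\norm{\sqrt{|(v^S)'|}(\ph,\whh)}^2$ you wrote is stronger than what the method gives. This is harmless if you quote the correct dissipation (it suffices for the later steps, as in Lemmas \ref{lowterm1}--\ref{lowterm3}), but the missing first-order weight discussed above is the substantive gap.
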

\begin{remark}
	In Theorem \ref{mainresult}, the amplitude of the shock wave can be arbitrarily large.
\end{remark}
\begin{remark}
	When the periodic perturbations vanish, the result of Theorem \ref{mainresult} implies  the stability of large-amplitude shock to the nonconvex system of viscoelasticity \eqref{VNC-Ion}, which remove the small amplitude assumption of shock in \cite{InitialModel} and the stronger than Lax's shock condition in \cite{Gaijin}.
\end{remark}
\begin{remark}
	The corresponding result as Theorem \ref{mainresult} for the degenerate shock case in \eqref{degenerate} remains open and be left for the further study.
\end{remark}
\begin{remark}
	By the similar proof of the a priori estimates below, we can show that Theorem \ref{mainresult}  also holds for the following non-degenerate case
	\begin{equation*}
		\begin{aligned}
			\sigma''(v)\gtrless 0,\quad \text{for} ~~ v\lessgtr 0 ~~\text{under consideration,}
		\end{aligned}
	\end{equation*}
	\begin{equation*}
		\begin{aligned}
		    \sigma'''(v)<0, \quad \text{for}~ v\neq0~\text{under consideration,}
	    \end{aligned}
	\end{equation*}
	which is stated as an open problem in \cite{InitialModel} for the stability of shock without periodic perturbation.
\end{remark}
\section{The a prior estimates}\label{section3}

The key ingrdient to prove Theorem \ref{mainresult} is to derive the a prior estimates for solutions to the perturbation equations in terms of the anti-derivative variables. For convenience, we change the coordinates from $(x,t)$ to $(\xi,t)$ with $\xi=x-st$ and denote 
\begin{align*}
	(\mathbf{v},\mathbf{u})(\xi,t):=(v,u)(\xi+st,t), \quad (\tilde{\mathbf{v}},\tilde{\mathbf{u}})(\xi,t)=(\vt,\tilde{u})(\xi+st,t).
\end{align*}
Define the anti-derivative variables of  perturbation $(\mathbf{v}-\mathbf{\tilde{v}},\mathbf{u}-\mathbf{\tilde{u}})$ by
\begin{equation}\label{adv}
	\Phi(\xi,t):=\int_{-\infty}^{\xi}(\mathbf{v}-\mathbf{\tilde{v}})(y,t) \,dy ,\quad \Psi(\xi,t):=\int_{-\infty}^{\xi}(\mathbf{u}-\mathbf{\tilde{u}})(y,t) \,dy
\end{equation}
Taking the difference of  \eqref{VNC-Ion} and \eqref{VNC-T} in $(\xi, t)$-coordinate and integrating the resulting equation on $(-\infty,\xi)$ yield that 
\begin{equation}\label{system2}
	\begin{cases}
		\p_t\Phi-s\p_{\xi}\Phi-\p_{\xi}\Psi=\mathbf{H}_1,\\
		\p_t\Psi-s\p_{\xi}\Psi-\sigma'(\mathbf{\tilde{v}})\p_{\xi}\Phi=\sigma(\mathbf{v}|\mathbf{\tilde{v}})+\mu \p^2_{\xi}\Psi+\mathbf{H}_2,
	\end{cases}
\end{equation}
where
\begin{equation}\label{hterm}
	\sigma(\mathbf{v}|\mathbf{\tilde{v}}):=\sigma(\mathbf{v})-\sigma(\mathbf{\tilde{v}})-\sigma'(\mathbf{\tilde{v}})\p_{\xi}\Phi\geqq 0
\end{equation}
and 
\begin{equation}\label{h1h2}
	\mathbf{H}_1:=\mathbf{F}_1+\mathbf{F}_2, \quad \mathbf{H}_2:=\mathbf{F}_3+\mathbf{F}_4
\end{equation}
with $\mathbf{F}_1(\xi,t)=F_1(\xi+st,t)$ and $\mathbf{F}_3(\xi,t)=F_3(\xi+st,t)$ given by \eqref{VNC-T}, and 
\begin{align*}
	\mathbf{F}_2(\xi,t):=\int_{-\infty}^{\xi}\mathbf{f}_2(y,t)  \,dy=\int_{-\infty}^{\xi}f_2(y+st,t)  \,dy=-\int_{\xi}^{+\infty}f_2(y+st,t)  \,dy,\\
	\mathbf{F}_4(\xi,t):=\int_{-\infty}^{\xi}\mathbf{f}_4(y,t)  \,dy=\int_{-\infty}^{\xi}f_4(y+st,t)  \,dy=-\int_{\xi}^{+\infty}f_4(y+st,t)  \,dy.
\end{align*}
The initial data for $(\Phi,\Psi)$ are given by
\begin{equation}\label{newinitial}
	(\Phi,\Psi)(\xi,0)=(\Phi_0,\Psi_0)(\xi) \in H^2(\R) \times H^2(\R),
\end{equation}

We will derive the following key proposition of a prior estimate for the solution $(\Phi,\Psi)$ to \eqref{system2} and \eqref{newinitial}. 
\begin{proposition}\label{mainpro}
	(A priori estimates). For any $\T>0$, assume that $(\Phi,\Psi)$ is a smooth solution to \eqref{system2} and \eqref{newinitial}. Then there
	exist $\varepsilon_0>0$ and $\delta_0>0$, independent of ~$\T$, such that if 
	\begin{equation}\label{initialsmall}
		\varepsilon=E_0<\varepsilon_0~~\text{and}~~\delta=\mathop{sup}\limits_{t \in (0,\T)}\norm{(\Phi,\Psi)}_2(t)<\delta_0,
	\end{equation}
	then
	\begin{equation}\label{proesti}
		\mathop{sup}\limits_{t \in [0,\T]}\norm{(\Phi,\Psi)}^2_2(t)+\int_{0}^{\T}(\norm{\p_{\xi}\Phi}^2_1+\norm{\p_{\xi}\Psi}^2_2)(t)  \,dt\leq C_0
		(\norm{(\Phi_0,\Psi_0)}^2_2+\varepsilon),
	\end{equation}
	where $C_0>0$ is independent of $\varepsilon$,$\delta$ and $\T$.
\end{proposition}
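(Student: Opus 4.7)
The plan is to close an $H^2$-type energy estimate for the anti-derivative system \eqref{system2} by combining the effective-velocity reformulation (to handle the large amplitude) with a piecewise weighted-and-transformed $L^2$ estimate (to absorb the sign-indefinite contribution of $\sigma''$). A direct $L^2$ energy estimate on $(\Phi,\Psi)$ does not close, since the cross term in $\Phi\Psi$ produced by $\sigma'(\tilde{\mathbf{v}})\p_\xi\Phi$ cannot be controlled by the parabolic dissipation $\mu\|\p_\xi^2\Psi\|^2$ without a smallness assumption on $\avr-\avl$. As the first preparatory step, following \cite{Yao.2016,HehuangJDE,HuangYuanCMP}, I would introduce the anti-derivative $M$ of the effective-velocity perturbation, so that the second equation acquires a zeroth-order damping-type structure in $M$ and the parabolic loss of one derivative is traded for an $L^2$ dissipation in $\p_\xi\Phi$.

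For the non-convexity, differentiating $\sigma(\mathbf{v})-\sigma(\tilde{\mathbf{v}})$ against $\Phi$ generates the obstruction $\sigma''(\tilde{\mathbf{v}})v^S_\xi\Phi^2$, whose sign changes across $\tilde{\mathbf{v}}=0$ by \eqref{SC2}. As outlined at the end of Section 1, I would split the profile range $[\avl,\avr]$ into $[\avl,0]$ and $[0,\avr]$, slightly extended to $[\avl,v_d]$ and $[v_{-1},\avr]$, and on each piece separately construct a nonlinear transform $(\hat\Phi,\hat M)=\T_\pm(\Phi,M)$ together with weight functions $w_{1,\pm},w_{2,\pm}$ depending on $v^S_\xi$, generalising the constructions of \cite{InitialModel,Gaijin} to the large-amplitude regime at the cost of being only piecewise smooth. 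A weighted $L^2$ estimate of the transformed system is then performed on $\{v^S_\xi\le 0\}$ and $\{v^S_\xi\ge 0\}$ separately and summed. Each sub-estimate picks up boundary fluxes at the matching points $v^S_\xi\in\{0,v_d,v_{-1}\}$; the key algebraic requirement is that the induced jump of flux be non-positive at every such point, which is to be verified by the flux computations deferred to the appendices. The outcome is the basic inequality
\[
\|(\Phi,M)\|^2(t)+\int_0^t\!\!\int_{\R}|v^S_\xi|(\Phi^2+M^2)\,d\xi\,d\tau+\int_0^t\|\p_\xi\Phi\|^2\,d\tau\le C\|(\Phi_0,\Psi_0)\|^2+\sR(t),
\]
where $\sR(t)$ collects the contributions of $\mathbf{H}_1,\mathbf{H}_2$, the shift derivatives $\X',\Y'$, and the quadratic nonlinearity $\sigma(\mathbf{v}|\tilde{\mathbf{v}})$; the source terms are handled by the exponential decay from Lemma~\ref{decay} and Lemma~\ref{XYdecay}, while the cubic error is absorbed using Sobolev embedding and the a-priori smallness \eqref{initialsmall}.

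I would then return to the untransformed system \eqref{system2}, differentiate once and twice in $\xi$, and run standard $L^2$ estimates for $\p_\xi^j\Phi,\p_\xi^j\Psi$ ($j=1,2$). At these derivative levels the problematic $\sigma''$ contribution only appears multiplied by $\p_\xi\Phi$ (rather than $\Phi$) and is therefore controlled by the dissipation itself, so the positivity $\sigma'>0$ from \eqref{SC} and the parabolic gain $\mu\|\p_\xi^{j+1}\Psi\|^2$ suffice to close these higher-order estimates. Combining the three levels and absorbing the $O(\delta)$ nonlinearities using \eqref{initialsmall} yields \eqref{proesti}.

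The hardest part is the construction in the middle paragraph: choosing the piecewise transforms $\T_\pm$ and weights $w_{j,\pm}$ so that \emph{both} the interior quadratic form on each of $\{v^S_\xi\le 0\}$ and $\{v^S_\xi\ge 0\}$ is coercive \emph{and} every jump of flux at the non-smooth matching points $v^S_\xi\in\{0,v_d,v_{-1}\}$ has a favourable sign, uniformly in the shock amplitude $\avr-\avl$. This delicate balance, relying on Lemma~\ref{unique}, Lemma~\ref{4cond} and the sign conditions \eqref{SC}-\eqref{SC3}, is precisely what removes the small-amplitude restriction of \cite{InitialModel} and the shock-speed constraint of \cite{Gaijin}.
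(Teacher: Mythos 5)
Your overall strategy coincides with the paper's (effective velocity, then a piecewise transformed/weighted zeroth-order estimate to kill the sign-indefinite $\sigma''$ term, then higher-order estimates), but the step you yourself flag as ``the hardest part'' is precisely where the proof of Proposition \ref{mainpro} lives, and your sketch of it is both unproven and, as described, insufficient. You propose one transform/weight pair on each of the two regions $\{v^S\le 0\}$ and $\{v^S\ge 0\}$ (slightly extended), with flux matching only at the three points $v^S\in\{v_{-1},0,v_d\}$. For a large-amplitude shock this does not work: with a single smooth weight on $[0,\avr]$ one cannot simultaneously enforce $Z(\vsx)\gtrsim|\sigma''(\vsx)|$ and $X(\vsx)\ge 0$ in \eqref{X-form}--\eqref{Z-form}, which is exactly the obstruction behind the shock-speed restriction of \cite{Gaijin} that the proposition is meant to remove. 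The paper's resolution has two quantitative ingredients absent from your plan: (i) a further subdivision of $[0,v_b]$ into $N$ subintervals of length $1/(2H)$ carrying the sawtooth weight $\bar a=[H(\cdot)]^{5/4}$ with alternating transform exponents, where $H$ must be taken large compared to $\tfrac{s^2\sigma''(\avr)}{(s^2-\sigma'(\avr))\sigma'(0)}$, $\tfrac{s^2-\sigma'(0)}{h(v_b)}$ and the constant in \eqref{Hbig}; and (ii) a special weight $C_0/(s^2-\sigma'(\vsx))^2$ on $[v_b,\avr]$, where $X\ge 0$ is rescued by the non-degeneracy \eqref{non-degenerate} together with the fact that $h(\vsx)^2$ vanishes faster than $h(\vsx)(s^2-\sigma'(\vsx))$ as $\vsx\to\avr$. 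Correspondingly, the set of non-smooth matching points is $\{v_{-1},v_1,\dots,v_N=v_b\}\cup\{v_d\}$, and the non-positivity of the flux jumps (which vanish at $v_1,\dots,v_{N-1}$ and are $\le 0$ at $v_N$ only because $H$ is large, cf. \eqref{interact}) has to be verified; asserting that the jumps can be arranged ``uniformly in the shock amplitude'' and deferring this to an appendix leaves the central claim of the proposition unsupported.

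Two further points. First, the obstruction at the lowest order sits in the effective-velocity variable, i.e.\ it is the term $\tfrac{s\sigma''(\vsx)(\vsx)'}{(\sigma'(\vsx))^2}W^2$, not $\sigma''\,v^S{}'\,\Phi^2$, and your claimed basic inequality with dissipation $\int\!\!\int |(\vsx)'|(\Phi^2+M^2)$ overstates what the construction yields: one only gets $\mu X\ph^2+\tfrac{a}{s}(\vsx)'Y(s\ph+\w)^2+(\vsx)'Z\w^2$ with $X,Y\ge 0$ and $Z\gtrsim|\sigma''(\vsx)|$, which degenerates near the inflection point $v=0$; fortunately only $\int\|\p_\xi\Phi\|^2$ (obtained separately from the unweighted $(\Phi,W)$ estimate in which the weighted bound controls $\mathbf{I}_{2,4}$) and the $|\sigma''|$-weighted $W^2$ bound are needed downstream. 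Second, at first order ``standard $L^2$ estimates'' on \eqref{system2} is too quick: the estimate must also produce $\int_0^\T\|\p^2_\xi\Phi\|^2\,dt$, and in the $(\Phi,W)$ formulation \eqref{mainsystem} an unweighted estimate creates a cross term $\sigma''(\vsx)(\vsx)'\p_\xi\Phi\,\p_\xi W$ with no $(\p_\xi W)^2$ control yet available; the paper handles this with the additional monotone weight $\tilde w$ in \eqref{newweight} (so that $\tilde w'\ge 0$ furnishes a good $(\p_\xi W)^2$ term), and only then recovers $\int\|\p_\xi W\|^2$, $\p_\xi\Psi$, $\p^2_\xi\Psi$ and $\p^2_\xi\Phi$ from \eqref{mainsystem} and \eqref{system2}. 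Your route back to \eqref{system2} could be made to work via a compensating cross estimate, but as written it glosses over where the $\p^2_\xi\Phi$ dissipation comes from.
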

In order to prove Proposition \ref{mainpro}, on the one hand, we
follow the idea of \cite{Yao.2016, HehuangJDE, HuangYuanCMP}to consider the effective velocity, $\mathbf{m}:=\mathbf{u}-\mu \p_{\xi}\mathbf{v}$, instead of 
the velocity $\mathbf{u}$, to obtain the energy estimate of lower-order terms even for the large-amplitude shock. On the other hand, to overcome the difficulty from non-convexity, we construct
a weight function and a transform function to obtain the energy estimate of lower-order terms for the equivalent new system \eqref{transystem}, see Lemma \ref{lowterm1} and \ref{lowterm2}. Finally, we turn back to \eqref{system2} 
to obtain the higher-order estimate, see Lemma \ref{lowterm3}.

Define $\mathbf{\tilde{m}}:=\mathbf{\tilde{u}}-\mu \p_{\xi}\mathbf{\tilde{v}}$, and denote the anti-derivative variable of the perturbed effective velocity as
\begin{equation}\label{w}
	W(\xi,t):=\int_{-\infty}^{\xi}(\mathbf{m}-\mathbf{\tilde{m}})(y,t)  \,dy =\Psi(\xi,t)-\mu \p_{\xi}\Phi.
\end{equation}
Then, it follows from \eqref{system2} that
\begin{equation}\label{mainsystem}
	\begin{cases}
		\p_t\Phi-s\p_{\xi}\Phi-\p_{\xi}W=\mu \p^2_{\xi}\Phi + \mathbf{H}_1,\\
		\p_tW-s\p_{\xi}W-\sigma'(\mathbf{\tilde{v}})\p_{\xi}\Phi=\sigma(\mathbf{v}|\mathbf{\tilde{v}})+\mathbf{H}_2-\mu \p_{\xi}\mathbf{H}_1,
	\end{cases}
\end{equation}
The initial datum of $W$ is 
\begin{align*}
	W_0(\xi):=W(\xi,0)=\Psi_0(\xi)-\mu\p_\xi \Phi_0 \in H^1(\R),
\end{align*}
which satisfies that $\norm{W_0}_{H^1(\R)}\leq \norm{\Psi_0}_{H^1(\R)}+C\norm{\Phi_0}_{H^2(\R)} \leq C\varepsilon$.

Under the assumption that $\varepsilon_0$ and $\delta_0$ are both small enough, it follows from \eqref{initialsmall} and Sobolev inequality that 
\begin{equation}\label{small1}
	\begin{aligned}
		\mathop{sup}\limits_{t \in [0,\T]}\norm{(\Phi,\Psi)}_{W^{1,\infty}(\R)}\leq C\mathop{sup}\limits_{t \in [0,\T]}\norm{(\Phi,\Psi)}_2\leq C\delta_0,\\
		\mathop{sup}\limits_{t \in [0,\T]}\norm{W}_{L^{\infty}(\R)}\leq C\mathop{sup}\limits_{t \in [0,\T]}\norm{W}_1\leq C\delta_0,
	\end{aligned}
\end{equation}
and 
\begin{equation}\label{v-bdd}
	\frac{\avl}{2}\leq \mathbf{\tilde{v}}(\xi,t)\leq 2\avr, \quad \frac{\avl}{4}\leq \mathbf{v}(\xi,t)\leq 4\avr,\text{ for }\xi \in \R, t\in[0,T].
\end{equation}

The following decay properties of the
error terms in \eqref{system2} or \eqref{mainsystem} are useful in the a prior estimates, whose proof is the same, so that being omitted here, as the one in \cite{HuangYuanCMP}. The proof can also follow from an abstract lemma developed in \cite{MeiYuan2025} for the decay estimates of error terms in non-localized perturbation problems.
\begin{lemma}\label{decay-error}
	Under the assumptions of Theorem \ref{mainresult}, it holds that
	\begin{align*}
		\norm{\mathbf{H}_1}_2(t),\norm{\mathbf{H}_2}_1(t)\leq C\varepsilon e^{-\alpha t},~t\geqq 0,
	\end{align*}
	where $C>0$ is independent of $\varepsilon$ and $t$, and $\alpha>0$ is the constant given in Lemma \ref{XYdecay}.
\end{lemma}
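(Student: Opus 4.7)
The plan is to exploit the key observation that in the ``background'' configuration---where the periodic corrections $v_{l,r} - \bar v_{l,r}$ and $u_{l,r} - \bar u_{l,r}$ all vanish and the two shifts coincide, $\X \equiv \Y$---the ansatz $(\tilde v, \tilde u)$ collapses to the exact shock profile $(v^S, u^S)$, which by construction solves \eqref{VNC-Ion}. A direct computation using the Rankine--Hugoniot conditions \eqref{RH-cond} and the profile identity \eqref{mono} shows that all four quantities $F_1, F_3, f_2, f_4$ defined in \eqref{F1234} vanish identically in this background. Consequently each of them admits a first-order expansion of the form (smallness prefactor)$\,\times\,$(spatially localized factor), where the prefactor is one of $v_{l,r} - \bar v_{l,r}$, $u_{l,r} - \bar u_{l,r}$, their spatial derivatives, $\X - \Y$, $\X'$, or $\Y'$, and the localized factor is built from $g$, $1-g$, and $g'$ (and their derivatives) evaluated at $x - st - \X$ or $x - st - \Y$.

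The smallness prefactors are all uniformly $O(\varepsilon e^{-\alpha t})$. Indeed, Lemma \ref{decay} applied with $k = 3$ (which the assumption $E_0 \le \varepsilon_0$ permits) yields
\[
\|(v_{l,r}, u_{l,r})(\cdot, t) - (\bar v_{l,r}, \bar u_{l,r})\|_{H^3(0, \pi_{l,r})} \le C \varepsilon e^{-\alpha t},
\]
so that by Sobolev embedding these perturbations and their first two spatial derivatives decay exponentially in $L^\infty(\R)$. Lemma \ref{XYdecay} supplies $|\X'(t)| + |\Y'(t)| \le C \varepsilon e^{-\alpha t}$ together with $|\X(t) - \X_\infty| + |\Y(t) - \Y_\infty| \le C \varepsilon e^{-\alpha t}$, and the zero-mass type assumption \eqref{new0mass} of Theorem \ref{mainresult} is precisely the identity $\X_\infty = \Y_\infty$, upgrading the previous bound to $|\X(t) - \Y(t)| \le C \varepsilon e^{-\alpha t}$. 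Writing $g_{st+\Y}(x) - g_{st+\X}(x) = \int_{\X(t)}^{\Y(t)} g'(x - st - \tau)\, d\tau$ and using the exponential decay of $g'$ away from its transition layer then gives $\|g_{st+\Y} - g_{st+\X}\|_{H^2(\R)} \le C \varepsilon e^{-\alpha t}$. Multiplying these smallness factors against the uniformly $H^3(\R)$-bounded localized factors yields $\|F_1\|_{H^2} + \|F_3\|_{H^1} \le C \varepsilon e^{-\alpha t}$.

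For the anti-derivative contributions $\mathbf{F}_2, \mathbf{F}_4$, I note that $f_2$ and $f_4$ each carry a factor $g'_{st+\X}$ or $g'_{st+\Y}$ inherited from the chain rule in \eqref{VNC-T}, so the same expansion makes them pointwise $O(\varepsilon e^{-\alpha t})$ and spatially concentrated near the shock layer. The ODEs \eqref{XY} defining $\X'$ and $\Y'$ are precisely the conditions $\int_\R f_2(\cdot, t)\, dx = \int_\R f_4(\cdot, t)\, dx = 0$ for $t > 0$, hence
\[
\mathbf{F}_2(\xi, t) = \int_{-\infty}^\xi \mathbf{f}_2(y, t)\, dy = -\int_\xi^{+\infty} \mathbf{f}_2(y, t)\, dy,
\]
which shows that $\mathbf{F}_2$ vanishes at both spatial infinities and, again from the localization of $g'$, is itself exponentially concentrated around the shock region with $\|\mathbf{F}_2\|_{L^2} \le C \varepsilon e^{-\alpha t}$; the identical argument handles $\mathbf{F}_4$, and differentiating in $\xi$ (each derivative falls on a fast-decaying $g$-type factor while the smallness prefactor persists) controls the higher Sobolev norms up to $H^2$ for $\mathbf{H}_1$ and $H^1$ for $\mathbf{H}_2$.

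The principal obstacle is the algebraic bookkeeping for the background vanishing of $F_3$, which mixes $g_{st+\X}$ (hidden in $\tilde v$) with $g_{st+\Y}$ and requires repeated use of the identity $\sigma(v^S) - \sigma(\bar v_{l,r}) = s\mu (v^S)' + s^2(v^S - \bar v_{l,r})$ obtained from \eqref{mono} together with $\bar u_r - \bar u_l = -s(\bar v_r - \bar v_l)$ from \eqref{RH-cond}. Once this algebraic cancellation is secured, the remaining estimates are a routine combination of the decay statements of Lemmas \ref{decay} and \ref{XYdecay} with uniform Sobolev bounds on the shock-profile factors, exactly as in \cite{HuangYuanCMP}.
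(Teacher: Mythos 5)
Your proposal is correct and follows essentially the same route as the paper, which omits the proof of Lemma \ref{decay-error} and defers to the argument of \cite{HuangYuanCMP}: background cancellation of $F_1,f_2,F_3,f_4$ via the Rankine--Hugoniot relations and the integrated profile equation \eqref{mono}, exponential decay of the periodic solutions (Lemma \ref{decay} with $k=3$) and of $\X'$, $\Y'$ and $|\X-\Y|$ (the latter using $\X_\infty=\Y_\infty$, i.e.\ \eqref{new0mass}), and the zero-integral property of $f_2,f_4$ to bound the antiderivatives $\mathbf{F}_2,\mathbf{F}_4$ by two-sided localized majorants. Nothing essential is missing.
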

\subsection{Lower-order Energy Estimates}\label{3.1sec}
The following lemma gives the lower order estimates of solution $(\Phi, W)$ to the system \eqref{mainsystem}. 
\begin{lemma}\label{lowterm1}
	Under the assumptions of Proposition \ref{mainpro}, there exist $\varepsilon_0>0$ and $\delta_0>0$ such that if $\varepsilon<\varepsilon_0$ and $\delta<\delta_0$, then
	\begin{equation}\label{estimate1}
		\begin{aligned}
			\mathop{sup}\limits_{t \in [0,\T]}\norm{(\Phi,W)}^2(t)+\int_{0}^{\T} \norm{\p_{\xi}\Phi}^2(t) \,dt \leq C(\norm{(\Phi_0,W_0)}^2+\varepsilon),
		\end{aligned}
	\end{equation}
	where $C>0$ is independent of $\varepsilon,\delta$ and $\T$.
\end{lemma}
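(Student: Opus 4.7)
The plan is to perform weighted energy estimates on the effective-velocity system \eqref{mainsystem}, exploiting the decomposition strategy announced in the introduction. Because $\sigma''$ changes sign at $v=0$ and by Lemma \ref{4cond} the inflection value lies strictly inside the range $(\avl,\avr)$ of the shock profile, a single global weight cannot dominate the indefinite cross term that arises in the energy identity; instead I split $\R$ into $\Omega_-:=\{\xi:\vsx(\xi)\in[\avl,0]\}$ and $\Omega_+:=\{\xi:\vsx(\xi)\in[0,\avr]\}$ and construct, on each region, a piecewise-smooth pair $(T_i,w_i)$ ($i=1,2$) of transform and weight functions, later extended respectively onto $[\avl,v_d]$ and $[v_{-1},\avr]$ to permit a meaningful matching near $\vsx=0$.

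On each piece I would first rewrite \eqref{mainsystem} in an equivalent transformed form obtained by replacing $\Phi$ with $T_i(\vsx)\Phi$ (or an appropriate linear recombination of $(\Phi,W)$), then multiply the transformed $\Phi$-equation by $w_i(\vsx)\sigma'(\vsx)T_i(\vsx)\Phi$ and the transformed $W$-equation by $w_i(\vsx)W$, add, and integrate over $\Omega_\pm$. Using the profile identity $s\mu\,\p_\xi\vsx=h(\vsx)$ from Lemma \ref{unique} and integrating by parts in $\xi$, the leading quadratic part splits into (i) a time derivative of a positive definite quadratic form in $(\Phi,W)$, (ii) a non-negative dissipation proportional to $\int \mu w_i\sigma'(\vsx)T_i^2|\p_\xi\Phi|^2\,d\xi$ coming from the viscous diffusion on $\Phi$, and (iii) a quadratic form in $(\Phi,W)$ weighted by $\p_\xi\vsx$, provided that $(T_i,w_i)$ satisfies a coupled ODE/inequality along the shock profile that cancels the indefinite cross term $\sigma''(\vsx)\,\p_\xi\vsx\,\Phi W$. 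Such ODEs are solvable explicitly on each subinterval precisely because the sign of $\sigma''$ is constant there, which is how the restriction on the shock speed in \cite{Gaijin} is bypassed.

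The main obstacle, and the step I expect to demand the most care, is the matching at $\vsx=0$. Since $(T_i,w_i)$ are only piecewise $C^1$, summing the two energy identities produces boundary fluxes at $\vsx=0$ (as well as at the extension endpoints $\vsx=v_d$ and $\vsx=v_{-1}$), and a naive normalization of the integration constants yields a jump of the wrong sign. I would fix the one-sided values of $(T_1,w_1)$ and $(T_2,w_2)$ so that the cumulative jump of the flux is non-positive; the verification uses in an essential way the sign hypotheses \eqref{SC2}--\eqref{SC3} and is the kind of explicit algebra the authors announce will be deferred to an appendix.

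Once the identity has a left-hand side bounded below by $\norm{(\Phi,W)}^2(t)+c\int_0^t\norm{\p_\xi\Phi}^2\,d\tau$, the right-hand side is handled routinely: the quadratic remainder $\sigma(\mathbf{v}|\mathbf{\tilde v})$ is controlled by $C\delta|\p_\xi\Phi|^2$ via Taylor's theorem together with \eqref{v-bdd} and \eqref{small1}, and is absorbed by the dissipation once $\delta_0$ is small enough; the error terms $\mathbf{H}_1$, $\mathbf{H}_2$, and $\mu\p_\xi\mathbf{H}_1$ are bounded by Cauchy--Schwarz and Lemma \ref{decay-error}, yielding an $O(\varepsilon e^{-\alpha t})$ contribution whose time integral is $O(\varepsilon)$. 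Combined with $\norm{W_0}\leq C(\norm{\Phi_0}_2+\norm{\Psi_0}_1)\leq CE_0$, this closes \eqref{estimate1}.
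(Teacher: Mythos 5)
Your overall architecture matches the paper's up to the point of the weighted estimate: the split of $\R$ according to $\vsx\in[\avl,0]$ and $\vsx\in[0,\avr]$, the piecewise-smooth transform/weight pairs extended to $[\avl,v_d]$ and $[v_{-1},\avr]$, the requirement that the flux jumps at the non-smooth matching points be non-positive, and the routine treatment of $\sigma(\mathbf{v}|\mathbf{\tilde v})$ and of $\mathbf{H}_1,\mathbf{H}_2$ via Lemma \ref{decay-error}. But there is a genuine gap at the decisive step. You assert that the weighted identity has a left-hand side bounded below by $\norm{(\Phi,W)}^2(t)+c\int_0^t\norm{\p_{\xi}\Phi}^2\,d\tau$. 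In the transformed identity (the analogue of \eqref{transe}) the viscous dissipation is $\mu a(\vsx)(\p_{\xi}\ph)^2$, i.e.\ it controls the derivative of the \emph{transformed} variable $\ph=\Phi/T(\vsx)$, not of $\Phi$ itself. Since $\p_{\xi}\Phi=T'(\vsx)(\vsx)'\ph+T(\vsx)\p_{\xi}\ph$, recovering $\int_0^{\T}\norm{\p_{\xi}\Phi}^2\,dt$ would additionally require a space--time bound on $((\vsx)')^2\ph^2$, which the construction does not supply: the lemma only guarantees $X\geq 0$, $Y\geq 0$, $Z\geq C|\sigma''|$, and on the region $\vsx\in[\avl,0]$ one has $X\equiv Y\equiv 0$ while $Z$ carries the factor $|\sigma''(\vsx)|$, which degenerates at the inflection point $\vsx=0$; in particular no coercive control of $\w^2$ or $\ph^2$ against $(\vsx)'$ survives near $\vsx=0$. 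So the claimed lower bound does not follow from the stated construction, and the lemma does not close in a single pass.

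The paper's proof is genuinely a two-step argument that you are missing. The transformed/weighted estimate (Lemma \ref{construction}) is used only to control the specific space--time quantity $\int_0^{\T}\int_{\R}|(\vsx)'\sigma''(\vsx)|\,W^2\,d\xi\,dt$, at the price of a term $C\delta\int_0^{\T}\norm{\p_{\xi}\Phi}^2\,dt$ on the right of \eqref{transformestimate}. Then one returns to the \emph{untransformed} system \eqref{mainsystem}, multiplies the first equation by $\Phi$ and the second by $W/\sigma'(\vsx)$, and obtains the identity \eqref{lowlemma}, in which the dissipation $\mu(\p_{\xi}\Phi)^2$ appears with the right sign and the only sign-indefinite contribution is $\frac{s\sigma''(\vsx)(\vsx)'}{2(\sigma'(\vsx))^2}W^2$ --- note this is a $W^2$ term, not the $\Phi W$ cross term you name --- which is exactly the quantity controlled by the first step; the small factors $\varepsilon,\delta$ then allow absorption of $\int\norm{\p_{\xi}\Phi}^2$. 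Two further minor corrections: the subdivision of $[0,v_b]$ into many small intervals is not dictated by sign changes of $\sigma''$ (it is positive throughout $(0,\avr]$); it is a sawtooth device to make the weight's slope of order $H$ so that $Z\geq C|\sigma''|$ even when $\sigma''$ is large, while keeping the weight bounded above and below. And near $\avr$ one must exploit that $((\vsx)')^2\sim h(\vsx)^2$ vanishes faster than $h(\vsx)(s^2-\sigma'(\vsx))$ to get $X\geq 0$ on $[v_b,\avr]$; this is where the non-degeneracy \eqref{non-degenerate} enters, a point your sketch does not address.
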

The proof of lemma \ref{lowterm1} should not follow the classical energy method as in \cite{HuangYuanCMP} did for the compressible Navier-Stokes equations
with convex pressure. This is because that the bad term $\frac{s\sigma''(\vsx)(\vsx)'}{\sigma'(\vsx)}W^2$ would appear in the energy estimates and change the sign, when $\sigma(v)$ is non-convex. This difficulty of non-convexity was overcame in Kawashima and Matsumura \cite{InitialModel} by weighted energy estimate method for the stability of the viscous shock with a small amplitude to the system \eqref{VNC-Ion}-\eqref{SC2}. However, this weighted energy estimate method can not be directly applied to the large amplitude shock studied here. 
When $s$ is assumed to satisfy the condition (condition (2.18) in \cite{Gaijin}):
\begin{equation}\label{mei's condition}
	s^2<\sigma'(\avl)+\frac{1}{2}\sigma''(\avl)[2(v^*-\avr)+\avl-\avr],
\end{equation}
Matsumura and Mei \cite{Gaijin} proved the stability of the viscous shock profile with non-convex relation of the opposite sign to \eqref{SC2} and \eqref{SC3}by introducing a transform function $T(v)$ and using weighted energy estimates for the transformed system. However, a transform function $T(v)$ introduced in  \cite{Gaijin} is not adaptable for the Lax's shock condition \eqref{non-degenerate} here even for the non-convex $\sigma(v)$ with the sign \eqref{SC2}-\eqref{SC3}. To remove the condition \eqref{mei's condition} and get the stability of shock with large amplitude, we have to introduce the system \eqref{mainsystem} of $(\Phi, W)$, where $W$ is the anti-derivative of the perturbed effective velocity in \eqref{w}, and modify the construction of transform and weighted functions in \cite{Gaijin} based on the structure of transformed system of \eqref{mainsystem}.

To show our idea more precisely, let us briefly introduce the idea of  Matsumura and Mei in \cite{Gaijin}. Under the shock speed condition \eqref{mei's condition}, they constructed the transform function $T(v^S)$ and weighted one $w(v^S)$, which are both $C^1([\bar{v}_l,\bar{v}_r])$, and performed the weighted energy estimates for the transformed system of  $(\ph,\mathbf{\Psi})$, which are defined by $$(\Phi,\Psi)(t,\xi)=T(v^S(\xi))(\ph(t,\xi),\mathbf{\Psi}(t,\xi)),$$ 
to obtain the following weighted energy identity
\begin{equation}\label{mei1}
	\begin{aligned}
		&\frac{1}{2}\p_t\left\{(w\sigma')(v^S)\ph^2+w(v^S)\mathbf{\Psi}^2\right\}-\p_{\xi}\left\{\dots\right\}+\mu w(v^S)\p_{\xi}\mathbf{\Psi}^2\\
		&+\frac{|\p_{\xi}v^S|}{2s}w(v^S)Y(v^S)(s\ph+\mathbf{\Psi})^2+\frac{|\p_{\xi}v^S|}{s}w(v^S)Z(v^S)\mathbf{\Psi}^2=\cdots,
	\end{aligned}
\end{equation}
for some $Y(v^S)$ and $Z(v^S)$ (c.f. $(4.8)-(4.9)$ in \cite{Gaijin}) satisfying
 \begin{equation}\label{YZ-prope}
 	Y(v^S)\geqq0,~~Z(v^S)\geqq C|\sigma''(v^S)|\text{ in }[\avl,\avr],
 \end{equation}
  whenever \eqref{mei's condition} is assumed. However, this property of $Y(v^S)$ and $Z(v^S)$ in \eqref{YZ-prope} can not be guaranteed by the $T$ and $w$ constructed in \cite{Gaijin} when the amplitude of shock is large. 
  
  Inspired by \cite{Yao.2016, HehuangJDE, HuangYuanCMP} and \cite{Gaijin}, we will apply the method of transformed and  weighted energy estimates to the system \eqref{mainsystem} so that we can obtain the weighted energy estimate with modified $Y(v^S)$ and $Z(v^S)$ and an extra term $X(v^S)$ in \eqref{X-form}-\eqref{Z-form} satisfying 
  \begin{equation}\label{XYZ-cond}
  	X(v^S)\geqq 0, ~~Y(v^S)\geqq0,~~Z(v^S)\geqq C|\sigma''(v^S)|\text{ in }[\avl,\avr].
  \end{equation}
     To achieve this, we have to construct new transform and weighted functions. However, it seems difficult to construct $C^1$ transform and weighted functions in the whole interval $[\bar{v}_l,\bar{v}_r]$ for the system \eqref{mainsystem} with non-degenerate shock in \eqref{non-degenerate}. In particular, when $\vsx \in [0,\avr]$ and no restriction of the shock speed $s$ like \eqref{mei's condition},  $Z(\vsx)$ defined in \eqref{Z-form} may not satisfy \eqref{YZ-prope} because $\sigma''(\vsx)$ could be very large and out of control. Our strategy is to divide the interval $[0,\avr]$ into finite sub-intervals  and construct transform and weighted functions in each intervals so that  $Z(\vsx)\geqq C|\sigma''(v^S)|$ in each sub-intervals. But this method only works for $\vsx$ away from $\avr$, because near the point $\avr$, $X(\vsx)$ may be negative. Therefore, we firstly take advantage of the property that $((\vsx)')^2=(\frac{h(\vsx)}{s\mu} )^2\to 0$ is faster than $\frac{h(\vsx)(s^2-\sigma'(\vsx))}{(s\mu)^2}$ as $\vsx \to \avr$ to construct weight and 
transform functions in the interval $[v_b,v_r] $ with $v_b$ close enough to $\avr$, so that both $Z(\vsx)\geqq C|\sigma''(v^S)|$ and $X(\vsx)\geqq0$ holds true on $[v_b,v_r] $. As a cost of our strategy, our weighted and transform functions are no longer $C^1$, but only piecewise smooth functions. Therefore, 
our construction of weight and transform functions would ensure that the jump of flux $\zeta$, defined by \eqref{zeta} below, on each non-smooth point is non-positive in the process of 
energy estimate. In the following, we will construct the weighted and transform functions separately in  $[\avl,0]$ and $[0,\avr]$ and extend them a little bit to $[\avl,v_d]$ and $[v_{-1},\avr]$, and divide the energy estimates on $\mathbb{R}$ into two parts with
$\vsx$ in $[\avl,0]$ and $[0,\avr]$.
It should be remarked that the extension of our weighted and transform functions is used to get the uniform lower bounds of $(\ph,\w)$ and make the jump of flux $\zeta$ non-positive.

\subsubsection{A new system after transformation}

\

Let $T(\vsx(\xi))$ be a transform function  to be constructed below, where $\vsx(\xi)$ is the viscous shock profile with the shift $\mathcal{X}(t)$. Define 
\begin{equation}\label{tran}
	(\Phi,W)(\xi,t)=T(\vsx(\xi))(\ph(\xi,t),\w(\xi,t)).
\end{equation}
Then the system \eqref{mainsystem} can be transformed into a new one in terms of $(\ph,\w)$ as
\begin{equation}\label{transystem}
	\begin{cases}
		\p_t\ph+\frac{\p_tT}{T}\ph-s\p_{\xi}\ph-\p_{\xi}\w-s\frac{\p_{\xi}T}{T}\ph-\frac{\p_{\xi}T}{T}\w-2\mu\frac{\p_{\xi}T}{T}\p_{\xi}\ph-\mu\p^2_{\xi}\ph-\mu\frac{\p^2_{\xi}T}{T}\ph=\frac{\mathbf{H}_1}{T},\\
		\p_t\w+\frac{\p_tT}{T}\w-s\p_{\xi}\w-\sigma'(\vsx)\p_{\xi}\ph-s\frac{\p_{\xi}T}{T}\w-\sigma'(\vsx)\frac{\p_{\xi}T}{T}\ph\\
		\qquad\qquad\qquad\qquad\qquad=\frac{\sigma(\mathbf{v}|\mathbf{\tilde{v}})+\mathbf{H}_2-\mu\p_{\xi}\mathbf{H}_1}{T}+(\sigma'(\mathbf{\tilde{v}})-\sigma'(\vsx))(\p_{\xi}\ph+\frac{\p_{\xi}T}{T}\ph),
	\end{cases}
\end{equation}
where $\p_tT=\frac{\partial(T(\vsx(\xi)))}{\partial t}$, $\p_\xi T=\frac{\partial(T(\vsx(\xi)))}{\partial \xi}$ and 
$\p_\xi^2T=\frac{\partial^2(T(\vsx(\xi)))}{\partial\xi^2}$. 
We will derive the weighted energy estimates for the transformed system \eqref{transystem}. To do so, let $w(\vsx)$ be a weighted function to be chosen below, and let $a(\vsx):=\sigma'(\vsx)w(\vsx)$. Multiplying the first and second equation in \eqref{transystem} by $a(\vsx)\ph$ and $w(\vsx)\w$ respectively and summing the resulting equations, we can get
\begin{equation}\label{transe}
	\begin{aligned}
		&\p_t\left(\frac{a(\vsx)}{2}\ph^2+\frac{w(\vsx)}{2}\w^2\right)+\mu a(\vsx)(\p_{\xi}\ph)^2\\
		&+\mu X(\vsx)\ph^2+\frac{a(\vsx)}{s}(\vsx)'Y(\vsx)(s\ph+\w)^2+(\vsx)'Z(\vsx)\w^2\\
		&=\underbrace{\frac{\X'(\vsx)'(\ph^2(2a(\vsx)T'(\vsx)-a'(\vsx)T(\vsx))+\w^2(2w(\vsx)T'(\vsx)-w'(\vsx)T(\vsx)))}{2T(\vsx)}}_{\mathbf{I}_{1,1}}\\
		&+\underbrace{\frac{\sigma(\mathbf{v}|\mathbf{\tilde{v}})w(\vsx)\w+(\mathbf{H}_2-\mu\p_{\xi}\mathbf{H}_1)w(\vsx)\w}{T(\vsx)}}_{\mathbf{I}_{1,2}}+\underbrace{\frac{\mathbf{H}_1\ph a(\vsx)}{T(\vsx)}}_{\mathbf{I}_{1,3}}+\underbrace{(\sigma'(\mathbf{\tilde{v}})-\sigma'(\vsx))w(\vsx)\p_{\xi}\ph\w}_{\mathbf{I}_{1,4}}\\
		&+\underbrace{\frac{T'(\vsx)}{T(\vsx)}(\sigma'(\mathbf{\tilde{v}})-\sigma'(\vsx))w(\vsx)(\vsx)'\ph\w}_{\mathbf{I}_{1,5}}+\underbrace{\p_{\xi}\zeta(\xi)}_{\mathbf{I}_{1,6}},\\
	\end{aligned}
\end{equation}
where
\begin{align}\label{X-form}
	&X(\vsx):=\p_{\xi}T\p_{\xi}\left(\frac{a}{T}\right)-\frac{1}{2}\p_{\xi}^2a\nonumber\\
	=&\left(\frac{T'(\vsx)}{T(\vsx)}a'(\vsx)-\left(\frac{T'(\vsx)}{T(\vsx)}\right)^2a(\vsx)-\frac{a''(\vsx)}{2}\right)\frac{h^2(\vsx)}{s^2\mu^2}-\frac{a'(\vsx)h(\vsx)h'(\vsx)}{2s^2\mu^2}
\end{align}
with $h(\vsx)$ being defined in Lemma \ref{unique}.
\begin{equation}\label{Y-form}
	Y(\vsx):=\frac{a'(\vsx)}{2a(\vsx)}-\frac{T'(\vsx)}{T(\vsx)}.
\end{equation}
and 
\begin{align}\label{Z-form}
	Z(\vsx):=&\frac{sw'(\vsx)}{2}-s\frac{T'(\vsx)}{T(\vsx)}w(\vsx)+\frac{T'(\vsx)}{sT(\vsx)}a(\vsx)-\frac{a'(\vsx)}{2s}\nonumber\\
	=&sw(\vsx)\left(\frac{s^2-\sigma'(\vsx)}{s^2}Y(\vsx)-\frac{\sigma''(\vsx)}{2\sigma'(\vsx)}\right),
\end{align}
Moreover, the flux $\zeta(\xi)$ is given by 
\begin{align}\label{zeta}
	\zeta(\xi):=&\frac{s a(\vsx)}{2}\ph^2+\frac{sw(\vsx)}{2}\w^2+\w\ph a(\vsx)+\mu a(\vsx)\p_{\xi}\ph\ph-\mu a(\vsx)Y(\vsx)(\vsx)'\ph^2.
\end{align}
\subsubsection{Construction of Weight Function and Transform Function}\label{construction of function}

\

In order to get weighted energy estimates, we have to construct suitable $T(\vsx)$ and $w(\vsx)$ such that $X(\vsx)$, $Y(\vsx)$ and $Z(\vsx)$ in \eqref{transe} satisfying \eqref{XYZ-cond}. As explained in the paragraph below Lemma \ref{lowterm1}, we will construct separately the weighted and transform functions in $[0,\avr]$ and $[\avl,0]$. We firstly construct the weight and transform functions of $\vsx$ in $[v_b,\avr]$ with $v_b$ sufficiently close to $\avr$, so that \eqref{XYZ-cond} holds for $\vsx\in [v_b,\avr]$. Secondly, we divide $[0,v_b]$ into finite sub-intervals and construct the weighted and transform functions in each sub-interval so that \eqref{XYZ-cond} are fulfilled. Furthermore, we also need to slightly extend the weighted and transform functions to $[v_{-1},\avr]$ so that the jump of flux $\zeta$ on each non-smooth points vanishes in the process of energy estimates. Finally, we construct the other transform and weight functions in $[\avl,0]$ and also slightly extend them to $[\avl,v_d]$ with $v_d>0$ so that the jump of flux $\zeta$ on each non-smooth points vanishes in the process of energy estimates.

\begin{part}{Construction of $T$ and $w$ on $[0,\avr]$}

First we introduce the construction of $T(\vsx)$ and $w(\vsx)$ on the interval $[0,\avr]$.

 For the non-degenerate shock \eqref{non-degenerate}, when  $\vsx\in[v_b,\avr]$ for some $v_b$, which will be determined in \eqref{XVb}, close to $\avr$, we define the weighted function 
 \begin{equation}
 	w(\vsx)=\frac{C_0}{(s^2-\sigma'(\vsx))^2},\quad \vsx\in[v_b,\avr]
 \end{equation}
and the transform function
\begin{equation}
	T(\vsx)=C_b,\quad \vsx\in[v_b,\avr]
\end{equation}
where $C_0,C_b$ are constants to be given exactly later. Then, it is obvious that
\begin{equation}
	Y(\vsx)=\frac{\sigma''(\vsx)(s^2+\sigma'(\vsx))}{2\sigma'(s^2-\sigma'(\vsx))}\geq 0
\end{equation}
and 
\begin{equation}
	Z(\vsx)=\frac{w\sigma''(\vsx)}{2s}\geq \frac{C_0}{2s(s^2-\sigma'(\avl))^2}\sigma''(\vsx).
\end{equation}
For $X(\vsx)$ on the interval $[v_b,\avr]$, we have 
\begin{equation}\label{part11}
	X(\vsx)=\frac{C_0}{s^2\mu^2(s^2-\sigma'(\vsx))^2}\left[(s^2+\sigma'(\vsx))\sigma''(\vsx)h(\vsx)- \frac{\vartheta(\vsx)}{(s^2-\sigma'(\vsx))^2}h^2(\vsx)\right],
\end{equation}
where
\begin{align*}
	\vartheta(\vsx):=[\sigma'''(\vsx)(s^2+\sigma'(\vsx))+(\sigma''(\vsx))^2](s^2-\sigma'(\vsx))+3(\sigma''(\vsx))^2(s^2+\sigma'(\vsx)).
\end{align*}
$\vartheta(\vsx)$ is bounded for $\vsx \in [\avl,\avr]$. Since $h(\vsx)\to 0$ as $\vsx \to \avr$, there exist a $v_b\in(0,\avr)$  such that  
\begin{equation}\label{XVb}
	X(\vsx)\geq 0, \quad \vsx\in[v_b,\avr].
\end{equation}
for the non-degenerate shock \eqref{non-degenerate}.

Next, we construct  $T(\vsx)$ and $w(\vsx)$ on $[0,v_b]$. 

Let $H>1$ be a constant to be chosen, and assume $N:=[v_b\cdot 2H]+1$ is even with loss of generality. Set  $v_i=\frac{i}{2H}(i=0,1,2\dots,N-1), v_N=v_b$. Then, $[0,v_b]$ can be divided into $N$ intervals $[v_i,v_{i+1}], i=0,\cdots, N-1$.  Define
	\begin{align}
	\bar{a}(\vsx):=\bar{w}(\vsx)\sigma'(\vsx)=
	\begin{cases}
		[H(\frac{1}{2H}+\vsx-v_{i})]^{\frac{5}{4}},&\vsx \in [v_{i},v_{i+1}], ~i=2k-2,\\
		[H(\frac{1}{H}+v_{i}-\vsx)]^{\frac{5}{4}},&\vsx \in [v_{i},v_{i}],~ i=2k-1,
	\end{cases}
\end{align}
for $k=1,2,\dots,\frac{N}{2}$. We further extend the definition of $a(\vsx)$ to the interval $[v_{-1}, v_0]$ with $v_{-1}=-\frac{1}{2H}$ as
\begin{equation}
	\bar{a}(\vsx):=\bar{w}(\vsx)\sigma'(\vsx)=[H(\frac{1}{2H}+\vsx)]^{\frac{5}{4}},~~\vsx \in[v_{-1},v_0].
\end{equation}
To ensure $s^2-\sigma'(\vsx)>0$ on $[v_{-1},v_0]$, we choose $H>-\frac{1}{2v^\ast}$, where $v^\ast<0$ is the constant in Lemma \ref{4cond}. 
The graph of $\bar{a}(\vsx)$ is shown as follows
\begin{figure}[htb]\label{abar}
	\begin{center}
		\begin{tikzpicture}
			\draw[->] (0,0)--(8.5,0);
			\draw[->] (0,0)--(0,4);
			\draw[thick,->] (-1.5,0)--(8.5,0) node[anchor=north west] {$\vsx$};
			\draw[thick,->] (0,0)--(0,4) node[anchor=north east] {$\bar{a}$};
			\node[below] at (0.1,0){0=$v_0$};
			\node[below] at (-1,0){$v_{-1}$};
			\node[below] at (1,0){$v_1$};
			\node[below] at (2,0){$v_2$};
			\node[below] at (3,0){$v_3$};
			\node[below] at (4,0){$v_4$};
			\node[below] at (5,0){$v_5$};
			\node[below] at (6,0){$v_6$};
			\node[below] at (7,0){$\dots$};
			\node[below] at (8,0){$v_N$=$v_b$};
			\node[left] at (0,1){1};
			\foreach \x in {0,1,...,9}
			{
				\draw[xshift=\x cm] (-1,0) -- (-1,0.1);
			};
			\foreach \x in {0}
			{
				\draw[yshift=\x cm] (0,1) -- (0.1,1);
			};
			\draw[domain=-1:1] plot(\x,{(1/2+\x/2)^(5/4)});
			\draw[domain=1:2] plot(\x,{(3/2-\x/2)^(5/4)});
			\draw[domain=2:3] plot(\x,{(\x/2-1/2)^(5/4)});
			\draw[domain=3:4] plot(\x,{(5/2-\x/2)^(5/4)});
			\draw[domain=4:5] plot(\x,{(\x/2-3/2)^(5/4)});
			\draw[domain=5:6] plot(\x,{(7/2-\x/2)^(5/4)});
			\draw[domain=6:7] plot(\x,{(\x/2-5/2)^(5/4)});
			\draw[domain=7:8] plot(\x,{(9/2-\x/2)^(5/4)});
		\end{tikzpicture}
	\end{center}
	\caption{Graph of $\bar{a}(\vsx)$}
\end{figure}

Now we choose the parameter $H>\max\{1,-\frac{1}{2v^\ast}\}$ properly to ensure $X(\vsx)$, $Y(\vsx)$ and $Z(\vsx)$ satisfy \eqref{XYZ-cond} on each interval $[v_i,v_{i+1}], i=0,\cdots, N-1$. 

Note that, for $k=1,2,3,\dots,\frac{N}{2},$
	\begin{align*}
	\bar{a}'(\vsx)=
	\begin{cases}
		\frac{5H}{4}[H(\frac{1}{2H}+\vsx)]^{\frac{1}{4}}\geq 0,&\vsx \in [v_{-1},v_{0}],\\
		\frac{5H}{4}[H(\frac{1}{2H}+\vsx-v_{i})]^{\frac{1}{4}}\geq5\cdot 2^{-\frac{9}{4}}H,&\vsx \in [v_{i},v_{i+1}], ~i=2k-2,\\
		-\frac{5H}{4}[H(\frac{1}{H}+v_{i}-\vsx)]^{\frac{1}{4}}\leq-5\cdot 2^{-\frac{9}{4}}H,&\vsx \in [v_{i},v_{i+1}],~ i=2k-1,
	\end{cases}
\end{align*}
Define $T(\vsx)$ so that
	\begin{align*}
	\frac{T'(\vsx)}{T(\vsx)}=
	\begin{cases}
		\frac{\bar{a}'(\vsx)}{4\bar{a}(\vsx)},&\vsx \in [v_{-1},v_1],\\
		\frac{\bar{a}'(\vsx)}{4\bar{a}(\vsx)},&\vsx \in [v_{i},v_{i+1}], ~i=2k,\\
		\frac{3\bar{a}'(\vsx)}{4\bar{a}(\vsx)},&\vsx \in [v_{i},v_{i+1}], ~i=2k-1,
	\end{cases}
\end{align*}
for $k=1,2,3,\dots,\frac{N}{2},$. Then,  we have
	\begin{equation}\label{tv}
	T(\vsx)=
	\begin{cases}
		\bar{a}(\vsx)^{\frac{1}{4}},&\vsx \in [v_{-1},v_1],\\
		C_{i} \bar{a}(\vsx)^{\frac{3}{4}}, &\vsx \in [v_{i},v_{i+1}], i=2k-1\\
		C_{i} \bar{a}(\vsx)^{\frac{1}{4}}, &\vsx \in [v_{i},v_{i+1}],i=2k,
	\end{cases}
\end{equation}
where the constants $C_i$ are given by
\begin{align*}
C_{i}=\begin{cases}
	\left(\frac{1}{\bar{a}(v_{2k-1})}\prod_{j=1}^{i-2}\frac{\bar{a}(v_{j+1})}{\bar{a}(v_j)}\right)^{\frac{1}{2}},&i=2k-1\\
	\left(\prod_{j=1}^{i-1}\frac{\bar{a}(v_{j+1})}{\bar{a}(v_j)}\right)^{\frac{1}{2}},&i=2k
\end{cases}
\end{align*}
 Therefore, we have
\begin{align*}
	Y(\vsx)=\left(\frac{a'}{2a}-\frac{T'}{T}\right)(\vsx)=\begin{cases}
		\frac{\bar{a}'(\vsx)}{4\bar{a}(\vsx)},&\vsx \in [v_{-1},v_1],\\
		\frac{\bar{a}'(\vsx)}{4\bar{a}(\vsx)},&\vsx \in [v_{i},v_{i+1}], ~i=2k,\\
		-\frac{\bar{a}'(\vsx)}{4\bar{a}(\vsx)},&\vsx \in [v_{i},v_{i+1}], ~i=2k-1,
	\end{cases}
\end{align*}
which yields that 
\begin{equation}
	Y(\vsx)\geq \frac{5}{16}H>0,\text{ for } \vsx\in[v_{-1},v_b].
\end{equation}
To get $Z(\vsx)\geq C|\sigma''(\vsx)|$ on $[v_{-1},v_b]$, it suffice to ensure that
\begin{equation*}
	\frac{s^2-\sigma'(\vsx)}{s^2}Y(\vsx)-\frac{\sigma''(\vsx)}{2\sigma'(\vsx)}\geq \frac{|\sigma''(\vsx)|}{2\sigma'(\vsx)}.
\end{equation*}
When $\vsx\in[0,v_b]$, it follows from \eqref{SC}-\eqref{SC3} that, $s^2-\sigma'(\vsx)\geq s^2-\sigma'(\avr)$, $\sigma''(\vsx)\leq\sigma''(\avr)$, $\sigma'(\vsx)>\sigma'(0)$. Thus, 
\begin{equation*}
	\frac{s^2-\sigma'(\vsx)}{s^2}Y(\vsx)-\frac{\sigma''(\vsx)}{2\sigma'(\vsx)}\geq\frac{s^2-\sigma'(\avr)}{s^2}\frac{5}{16}H\geq \frac{\sigma''(\avr)}{\sigma'(0)}\geq \frac{\sigma''(\vsx)}{\sigma'(\vsx)}
\end{equation*}
when $H$ is chosen large enough so that
\begin{equation}
	H\geq \frac{16s^2\sigma''(\avr)}{5(s^2-\sigma'(\avr))\sigma'(0)}.
\end{equation}
Therefore, we have
\begin{equation}
	Z(\vsx)\geq sw(\vsx)\frac{\sigma''(\vsx)}{\sigma'(\vsx)}\geq C\sigma''(\vsx), \text{ for }\vsx\in[v_0,v_b].
\end{equation} 
When $\vsx\in[v_{-1},0]$, it follows from \eqref{SC2} that $\sigma''(\vsx)<0$ and $s^2-\sigma'(\vsx)\geq 0$. Therefore, 
we have
\begin{equation}
	Z(\vsx)\geq -\frac{\sigma''(\vsx)}{2\sigma'(\vsx)}\geq C|\sigma''(\vsx)|, \text{ for }\vsx\in[v_{-1},v_0].
\end{equation}
It remains to show $X(\vsx)\geq 0$ on $[v_{-1},v_b]$ by choosing sufficiently large $H$. Note that
\begin{align*}
	\bar{a}''(\vsx)=
	\begin{cases}
		\frac{5H^2}{16}[H(\frac{1}{2H}+\vsx-v_{i})]^{-\frac{3}{4}} ,&\vsx \in [v_{i},v_{i+1}], ~i=2k-2,\\
		\frac{5H^2}{16}[H(\frac{1}{H}+v_{i}-\vsx)]^{-\frac{3}{4}},&\vsx \in [v_{i},v_{i+1}],~ i=2k-1,
	\end{cases}
\end{align*}
and 
$$h'(\vsx)=-s^2+\sigma'(\vsx)<0,\text{ for }\vsx\in[v_{-1},v_b]$$
which yields that
\begin{equation*}
	h(\vsx)\geq h(v_b)>h(\avr)=0
\end{equation*}
When $\vsx \in [v_{i},v_{i+1}],~~i=2k-1,k=1,\cdots,\frac{N}{2}$, we have 
\begin{equation}\label{Xodd}
	\begin{aligned}
		X(\vsx)&=\left(\frac{3}{16}\frac{a'^2(\vsx)}{a(\vsx)}-\frac{a''(\vsx)}{2}\right)\frac{h^2(\vsx)}{s^2\mu^2}+\frac{a'(\vsx)h(\vsx)(s^2-\sigma'(\vsx))}{2s^2\mu^2}\\
		&=\frac{35H^2}{256}(H(\frac{1}{H}+v_i-\vsx))^{-\frac{3}{4}}(\frac{h(\vsx)}{s\mu})^2-\frac{5H(H(\frac{1}{H}+v_i-\vsx))^\frac{1}{4}h(\vsx)(s^2-\sigma'(\vsx))}{8(s\mu)^2}\\
		&\geq\frac{h(\vsx)}{s^2\mu^2}\left(\frac{35H^2}{256}h(v_b)-\frac{5H(s^2-\sigma'(\vsx))}{8}\right)\geq0,
	\end{aligned}
\end{equation}
when $H$ is chosen large enough so that
\begin{equation}
	H\geq \frac{8(s^2-\sigma'(0))}{h(v_b)}.
\end{equation}
When $\vsx \in [v_{i},v_{i+1}],~~i=2k,k=1,\cdots,\frac{N}{2}$ or $\vsx\in[v_{-1},v_1]$, we have
\begin{equation}\label{Xeven}
	\begin{aligned}
		X(\vsx)&=\left(\frac{3}{16}\frac{a'^2(\vsx)}{a(\vsx)}-\frac{a''(\vsx)}{2}\right)\frac{h^2(\vsx)}{s^2\mu^2}+\frac{a'(\vsx)h(\vsx)(s^2-\sigma'(\vsx))}{2s^2\mu^2}\\
		&\geq \left(\frac{3}{16}\frac{a'^2(\vsx)}{a(\vsx)}-\frac{a''(\vsx)}{2}\right)\frac{h^2(\vsx)}{s^2\mu^2}\\
		&=\frac{35H^2}{256}(H(\frac{1}{2H}+\vsx-v_i))^{-\frac{3}{4}}(\frac{h(\vsx)}{s\mu})^2\geqq0,
	\end{aligned}
\end{equation}
where we have used $a'(\vsx)>0,h(\vsx)>0,s^2-\sigma'(\vsx)\geq0$ in the second inequality.

Also, to ensure the integral of $\zeta(\xi)$ is non-positive we should also let 
\begin{equation}\label{Hbig}
	H\geq\frac{4(s^2+\sigma'(v_b))\sigma''(v_b)}{5(s^2-\sigma'(v_b))},
\end{equation}
which is used in \eqref{interact}. We set $C_0=(s^2-\sigma'(v_b))^2\bar{w}(v_b)$, $C_b=C_N\bar{a}^\frac{3}{4}(v_b)$ so that $T(\vsx), w(\vsx) \in C[v_{-1},\avr]$. 
\end{part}
\begin{part}{Construction of $T$ and $w$ in $[\avl,0]$}
	\
	Finally, we construct  $T(\vsx)$ and $w(\vsx)$ on  $[\avl,0]$. To distinguish with the weight function and transform one on $[0,\avr]$, we denote them by $\wh(\vsx)$ and $\th(\vsx)$. Since $\sigma''(\vsx)<0$ on $\vsx\in[\avl,0]$, we define $\wh(\vsx)=\frac{C_d}{\sigma'(\vsx)},$ $\th(\vsx)=C_d^{\frac{3}{4}}$ for some constant on $C_d$. It is clear from \eqref{X-form}-\eqref{Z-form} that
	 \begin{equation}
	 	X(\vsx)=Y(\vsx)=0,\quad Z(\vsx)=-\frac{\sigma''(\vsx)}{2\sigma'(\vsx)}\geq \frac{|\sigma''(\vsx)|}{2\sigma'(\avl)},\text{ for }\vsx\in[\avl,0].
	 \end{equation} 
	We further extend the definition of $\hat{a}(\vsx)$ to the interval $[0,v_d]$ for some $v_d>0$ to be chosen such that $\wh(v_d)=0$.
	Define $\hat{a}(\vsx):=\wh(\vsx)\sigma'(\vsx)=v_d^2-(\vsx)^2$, and $\th(\vsx)=\hat{a}^{\frac{3}{4}}(\vsx)$.
	Then, plugging the formula of $\hat{a}(\vsx)$ and $\th(\vsx)$ into \eqref{X-form}-\eqref{Z-form} yields that
	\begin{equation}
		\begin{aligned}
			X(\vsx)&=-\frac{a''(\vsx)}{2}(\frac{h(\vsx)}{s\mu})^2-\frac{a'(\vsx)h(\vsx)h'(\vsx)}{2(s\mu)^2}\\
			&=\frac{h(\vsx)}{\mu ^2s^2}[h(\vsx)-\vsx(s^2-\sigma'(\vsx))],
		\end{aligned}
	\end{equation}
	\begin{equation}
		Y(\vsx)=-\frac{\hat{a}'(\vsx)}{4\hat{a}(\vsx)}=\frac{\vsx}{2(v_d^2-(\vsx)^2)}\geq 0,\text{ for }\vsx\in[0,v_d],
	\end{equation}
and
	\begin{equation}
		Z(\vsx)=s\wh(\vsx)\left(\frac{\vsx(s^2-\sigma'(\vsx))}{2s^2(v_d^2-(\vsx)^2)}-\frac{\sigma''(\vsx)}{2\sigma'(\vsx)}\right)
	\end{equation}
	Note that $X(0)>0$ and $X(\vsx)$ is continuous in $\vsx$. Then, there exists a $v_d \in (0,\avr]$ such that 
	\begin{align*}
		X (\vsx)\geqq 0, \quad \text{for}~\vsx \in [0,v_d].
	\end{align*}
For $Z(\vsx)$, it is obvious that
\begin{align*}
	Z(\vsx)\geq s\wh\left(\frac{\vsx(s^2-\sigma'(\avr))}{2s^2v_d^2}-\frac{\sigma''(\vsx)}{2\sigma'(0)}\right).
\end{align*}
	Since $\mathop{lim}\limits_{\vsx \to 0}\frac{\sigma''(\vsx)}{\vsx} = \sigma'''(0)$, we can find $v_d \in (0,\avr]$ sufficiently small such that 
	\begin{equation}\label{hatc1}
			\frac{\vsx(s^2-\sigma'(\avr))}{2s^2v_d^2}-\frac{\sigma''(\vsx)}{2\sigma'(0)}\geqq\sigma''(\vsx), \quad \text{for all}~\vsx \in [0,v_d].
	\end{equation}
	Therefore, we have $Y(\vsx)\geq0$, $Z(\vsx)\geq s\wh(\vsx)\sigma''(\vsx)$, $X(\vsx)\geqq 0$ for $\vsx \in [0,v_d]$.
	
	To ensure $w(\vsx)$ and $T(\vsx)$ being $C^1$ functions, it suffices to choose $C_d=v_d^2$. Therefore, the transform and weighted functions are defined on $[\avl, v_d]$ for sufficiently small $v_d>0$ by 
	\begin{equation}\label{wh}
		\wh(\vsx)=
		\begin{cases}
			\frac{(v_d)^2-(\vsx)^2}{\sigma'(\vsx)},\quad \vsx \in [0,v_d],\\
			\frac{(v_d)^2}{\sigma'(\vsx)},\quad \vsx \in [\avl,0],
		\end{cases}
	\end{equation}
	\begin{equation}\label{th}
		\th(\vsx)=
		\begin{cases}
			\ah^\frac{3}{4}(\vsx),\quad \vsx \in [0,v_d],\\
			\ah^\frac{3}{4}(0),\quad \vsx \in [\avl,0],
		\end{cases}
	\end{equation}
	obviously, $\wh \in C^1[\avl,v_d]$, $\th \in C^1[\avl,v_d]$.
\end{part}

\subsubsection{Energy Estimates for the transformed system}\label{construction of function2}


To prove Lemma \ref{lowterm1}, we firstly have the energy estimate of \eqref{mainsystem} as follows.
\begin{lemma}\label{construction}
	Under the assumption of Proposition \ref{mainpro}, there exists $\varepsilon_0>0$ and $\delta_0>0$ and such that if $\varepsilon<\varepsilon_0$ and $\delta<\delta_0$, then we have the estimate as follows,
	\begin{equation}\label{transformestimate}
		\begin{aligned}
			&\mathop{sup}\limits_{t \in [0,\T]}\norm{\Phi,W}^2(t) +\int_{0}^{\T}\int_{\R} |(\vsx)'\sigma''(\vsx)|W^2\,d\xi \,dt\\
			&\leq C(\varepsilon+\norm{\Phi_0,W_0}^2+\delta\int_{0}^{T} \norm{\p_{\xi}\Phi}^2 \,dt ),
		\end{aligned}
	\end{equation}
	where $C>0$ is independent of $\varepsilon,\delta$ and $\T$.
\end{lemma}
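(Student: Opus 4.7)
The plan is to integrate the weighted energy identity \eqref{transe} in $\xi$ and $t$, treating the two regions $\{\xi : \vsx \in [\avl, 0]\}$ and $\{\xi : \vsx \in [0,\avr]\}$ separately with the piecewise weight/transform pairs $(\wh,\th)$ and $(w,T)$ built in Section~\ref{construction of function}. Since both $\th$ and $T$ are continuous and pinched between positive constants, a bound on $\norm{(\ph,\w)}^2$ is equivalent to a bound on $\norm{(\Phi,W)}^2$ via \eqref{tran}, so it suffices to estimate the transformed variables. After integrating in $\xi$, the left-hand side of \eqref{transe} produces $\frac{d}{dt}\norm{(\ph,\w)}^2$ together with the positive dissipative quantities $\int \mu a(\vsx)(\p_{\xi}\ph)^{2}$, $\int \mu X(\vsx)\ph^{2}$, $\int \frac{a(\vsx)}{s}(\vsx)'Y(\vsx)(s\ph+\w)^{2}$ and $\int(\vsx)'Z(\vsx)\w^{2}$. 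By the construction of Section~\ref{construction of function} one has $X,Y\geq 0$ on both regions and $Z(\vsx)\geq C|\sigma''(\vsx)|$, so the $Z$-term dominates $\int|(\vsx)'\sigma''(\vsx)|\w^{2}$, which after passing back through \eqref{tran} furnishes the second term on the left of \eqref{transformestimate}.

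Next I would estimate the six source terms $\mathbf{I}_{1,1},\dots,\mathbf{I}_{1,6}$ on the right of \eqref{transe}. The term $\mathbf{I}_{1,1}$ carries the factor $\X'$, which decays as $\varepsilon e^{-\alpha t}$ by Lemma~\ref{XYdecay}, while the $(\mathbf{H}_2-\mu\p_{\xi}\mathbf{H}_1)\w/T$ part of $\mathbf{I}_{1,2}$ and the whole of $\mathbf{I}_{1,3}$ are exponentially small in $t$ by Lemma~\ref{decay-error}. Cauchy--Schwarz against $\norm{(\ph,\w)}^{2}$ then produces a total $O(\varepsilon)$ contribution after time integration. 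The quadratic piece $\sigma(\mathbf{v}|\tilde{\mathbf{v}})\w$ in $\mathbf{I}_{1,2}$ is controlled by \eqref{hterm} via $|\sigma(\mathbf{v}|\tilde{\mathbf{v}})|\leq C(\p_{\xi}\Phi)^{2}$ together with $\norm{\w}_{L^{\infty}}\leq C\delta$ from \eqref{small1}, yielding precisely the tolerated $C\delta\int_{0}^{\T}\norm{\p_{\xi}\Phi}^{2}\,dt$ on the right of \eqref{transformestimate}. Finally $\mathbf{I}_{1,4}$ and $\mathbf{I}_{1,5}$ involve $\sigma'(\tilde{\mathbf{v}})-\sigma'(\vsx)$, which by the ansatz \eqref{ansatz} and Lemma~\ref{decay} is bounded pointwise by $|\tilde{\mathbf{v}}-\vsx|\lesssim\varepsilon e^{-\alpha t}$ uniformly in $\xi$, so Cauchy--Schwarz against the dissipation $\mu a(\vsx)(\p_{\xi}\ph)^{2}$ absorbs them into another $O(\varepsilon)$ remainder.

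The delicate step is the flux term $\mathbf{I}_{1,6}=\p_{\xi}\zeta$. After integrating in $\xi$, the values at $\pm\infty$ vanish because $(\ph,\w)\in H^{1}(\R)$ and $(\vsx)'\to 0$, but finitely many interior jumps survive at the $\xi$-values where $\vsx$ crosses a non-smooth node ($v_i$, $v_b$, $0$, or $v_d$) of the weight/transform construction. The gluing constants $C_i$, $C_0=(s^2-\sigma'(v_b))^2\bar w(v_b)$, $C_b$, and $C_d=v_d^{2}$ have been chosen so that $w$ and $T$ are continuous across every node, which reduces each jump $[\zeta]$ read off from \eqref{zeta} to a multiple of $\ph^{2}$ times a jump of the coefficient $a(\vsx)Y(\vsx)(\vsx)'$. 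The lower bound \eqref{Hbig} on $H$ and the smallness of $v_d$ chosen in \eqref{hatc1} are designed precisely so that every such jump is non-positive; at the node $v_b$ the rapid vanishing $h(\vsx)=O(\avr-\vsx)$ built into the construction on $[v_b,\avr]$ dominates the mismatch between the two weight forms. These nonpositive jumps can therefore be moved to the left-hand side and discarded. Verifying the sign at every node is expected to be the main technical obstacle, and I would defer the node-by-node algebra to the appendices announced in the introduction.

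Assembling the two regional estimates, integrating in $t\in[0,\T]$, and choosing $\varepsilon_{0}$ and $\delta_{0}$ small enough to absorb the $O(\varepsilon)$ remainders into $\mathop{sup}_{t}\norm{(\Phi,W)}^{2}$ and the $O(\delta\norm{\p_{\xi}\Phi}^{2})$ remainder into the $\delta\int_{0}^{\T}\norm{\p_{\xi}\Phi}^{2}\,dt$ budget already present on the right of \eqref{transformestimate}, and finally undoing the transform \eqref{tran}, yields the claimed bound \eqref{transformestimate}.
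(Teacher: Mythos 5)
Your treatment of $\mathbf{I}_{1,1}$--$\mathbf{I}_{1,5}$, the use of \eqref{XYZ-cond}, and the passage between $(\ph,\w)$ and $(\Phi,W)$ all match the paper, but your handling of the flux term $\mathbf{I}_{1,6}$ contains a genuine gap at the interface between the two constructions. You split $\R$ into the two \emph{non-overlapping} regions where $\vsx\in[\avl,0]$ and $\vsx\in[0,\avr]$ and assert that the gluing constants make $w$ and $T$ ``continuous across every node,'' so that every surviving jump of $\zeta$ is a sign-definite multiple of $\ph^2$. That is not true at the node $\vsx=0$ where your two regions meet: there the hat pair gives $\wh(0)=v_d^2/\sigma'(0)$, $\th(0)=v_d^{3/2}$, while the other pair gives $w(0)=(1/2)^{5/4}/\sigma'(0)$, $T(0)=(1/2)^{5/16}$ (the constant $C_d=v_d^2$ only matches $\wh,\th$ across $0$ \emph{within} the hat construction, between $[\avl,0]$ and $[0,v_d]$). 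Since $T$ jumps there, the transformed variables $(\ph,\w)$ themselves jump, and the flux mismatch at that point involves the sign-indefinite terms $a\ph\w$ and $\mu a\,\ph\,\p_\xi\ph$ in \eqref{zeta}; it cannot be reduced to a jump of $aY(\vsx)'$ times $\ph^2$. Even if you rescaled the constants to force continuity of $w$ and $T$ at $0$, the jump would be $\mu(\vsx)'\ph^2\bigl(aY(0^+)-aY(0^-)\bigr)=\mu(\vsx)'\ph^2\,\bar a'(0)/4>0$, since $Y\equiv 0$ on $[\avl,0]$ and $Y>0$ on $[0,v_b]$ — a positive jump that cannot be discarded. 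Neither \eqref{Hbig} nor \eqref{hatc1} addresses this interface, so the ``delicate step'' of your argument fails exactly where it matters.

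The missing idea is the paper's use of two \emph{overlapping} integration domains, which is the whole point of extending the constructions beyond $[\avl,0]$ and $[0,\avr]$: the pair $(w,T)$ is used on $\{\vsx\in[v_{-1},\avr]\}$, where the extension makes $a(v_{-1})=w(v_{-1})=aY(v_{-1})=0$ so that the boundary flux $\zeta(\xi_{-1}+\X(t))$ vanishes, and the pair $(\wh,\th)$ is used on $\{\vsx\in[\avl,v_d]\}$, where $\hat a(v_d)=\wh(v_d)=0$ and the residual boundary term $-\mu\hat aY(\vsx)'\ph^2$ is non-positive (this is \eqref{interact2}). In this way the two weighted identities are closed \emph{independently}, with no matching condition at $\vsx=0$ at all; the only interior jumps are those at $v_1,\dots,v_N$ inside a single construction, which are zero or non-positive by \eqref{interact}. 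One then uses the non-negativity of the integrands to shrink the first estimate to $\{\vsx\in[0,\avr]\}$ and the second to $\{\vsx\in[\avl,0]\}$ — where the weights do have uniform positive lower bounds — and adds them to cover $\R$, as in \eqref{pst-integral} and \eqref{neg-integral}. Your proposal never engages with why $v_{-1}$ and $v_d$ exist (you list them as ordinary nodes of a single global integration), and without this overlap mechanism the estimate \eqref{transformestimate} does not follow from the decomposition you describe.
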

\begin{proof}
	The estimate \eqref{transformestimate} would follows from integrating \eqref{transe} with respect to $\xi$ on $\mathbb{R}$, \eqref{XYZ-cond} and estimates for $\textbf{I}_{1,i}$,$i=1,\cdots,6$. Due to our construction of the weighted and transform functions, we will integrate separately with respect to $\xi$ on the intervals for $\xi$ corresponding to $\vsx$ in $[0,\avr]$ and $[\avl,0]$. Since $\vsx$ is an increasing function of $\xi$, let
	$$\vsx(\xi_i+\X(t))=v^S(\xi_i)=v_i, i=-1,0,1,\cdots,N$$
	$$\vsx(\xi_d+\X(t))=v^S(\xi_d)=v_d$$
	with $v_i=\frac{i}{2H},i=-1,\cdots, N-1$, $v_N=v_b$, $v_d$ defined in section 3.1.2.  
	
	First, we integrate \eqref{transe} over $[0,\T]\times[\xi_{-1}+\X(t),\xi_0+\X(t))$,  \dots, $[0,\T]\times[\xi_{N-1}+\X(t),\xi_N+\X(t)]$ and 
	$[0,\T]\times[\xi_N+\X(t),+\infty)$ and add the resulting equations together to get
\begin{equation}\label{pst-part}
	\begin{aligned}
		&\int_{\xi_{-1}+\X(t)}^{+\infty}\left(\frac{a(\vsx)}{2}\ph^2+\frac{w(\vsx)}{2}\w^2\right)(t)\,d\xi+\int_{0}^{\T}\int_{\xi_{-1}+\X(t)}^{+\infty}\mu a(\vsx)(\p_{\xi}\ph)^2\,d\xi dt\\
		&+\int_{0}^{\T}\int_{\xi_{-1}+\X(t)}^{+\infty}\mu X(\vsx)\ph^2+\frac{a(\vsx)}{s}(\vsx)'Y(\vsx)(s\ph+\w)^2+(\vsx)'Z(\vsx)\w^2\,d\xi dt\\
		&=\sum_{i=1}^{6}\int_{0}^{\T}\int_{\xi_{-1}+\X(t)}^{+\infty}\textbf{I}_{1,i}\,d\xi dt.
	\end{aligned}
\end{equation}
	It remains to control the right hand side of \eqref{pst-part}. By the construction of $T(\vsx),w(\vsx)$ for $\vsx\in[v_{-1},\avr]$, we have $T(\vsx),w(\vsx) \in C[v_{-1},\avr]$,
	but not in $C^1[v_{-1},\avr]$.  Thus,
	$$\zeta(\xi_{-1}+\X(t))=\zeta(+\infty)=0,$$
	and
	$$[\zeta](\xi_i+\X(t)):=\zeta_{-\X}(\xi_i-)-\zeta_{-\X}(\xi_i+)=\mu aY(\vsx)'\ph^2(v_i^+)-\mu aY(\vsx)'\ph^2(v_i^-),i=1,\cdots, N.$$
	It is easy to check (c.f. Appendix) that
	\begin{equation}\label{interact}
		\begin{aligned}
			&[\zeta](v_i)=0,~~i=1,2,3,\dots,N-1,\\
			&[\zeta](v_N)=\frac{\mu}{2}[-\frac{5H}{2^{\frac{13}{4}}}+\frac{(s^2+\sigma'(v_b))\sigma''(v_b)}{2^{\frac{5}{4}}(s^2-\sigma'(v_b))}]\leq 0,
		\end{aligned}
	\end{equation}
	for $H$ being chosen large enough in section 3.1.2, especially in \eqref{Hbig}. 
 Therefore, 
	\begin{equation}\label{flux-term1}
		\int_{0}^{\T}\int_{\xi_{-1}+\X(t)}^{+\infty}\textbf{I}_{1,6}\,d\xi dt=\int_{0}^{\T}\sum_{i=1}^{N}[\zeta](v_i)\leq 0
	\end{equation}
For other terms $\int_{0}^{\T}\int_{\xi_{-1}+\X(t)}^{+\infty}\textbf{I}_{1,i}\,d\xi dt,i=1,\cdots,5$,	it follows from Lemma \ref{decay} and \ref{XYdecay} that
	\begin{equation}\label{traveling-wave-decay}
		\norm{\vsx-\mathbf{\tilde{v}}}_{L^{\infty}(\R)}=\norm{(\mathbf{v}_l-\avl)(1-g_{\X})+(\mathbf{v}_r-\avr)g_{\X}}_{L^{\infty}(\R)}\leq C\varepsilon e^{-\alpha t}.
	\end{equation}
	\begin{equation}\label{X'decay}
		|\X'(t)|\leq C\varepsilon e^{-\alpha t}.
	\end{equation}
Note that $T(\vsx),w(\vsx),T'(\vsx),w'(\vsx)$ is bounded for $\vsx\in[v_{i},v_{i+1}]$, $i=-1,\cdots,N$, and$\vsx\in[v_N, \avr]$, and $T(\vsx)>0$ is bounded from below. By \eqref{X'decay}, we have 
\begin{equation}\label{X'est}
	\begin{aligned}
		\int_{0}^{\T}\int_{\xi_{-1}+\X(t)}^{+\infty}|\mathbf{I}_{1,1}|\,d\xi\,dt&\leq \int_{0}^{\T}\int_{\xi_{-1}+\X(t)}^{+\infty}|\X'(t)|(\ph^2+\w^2)\,d\xi\,dt\\
		&\leq C\varepsilon\int_{0}^{\T}e^{-\alpha t}(\norm{\ph}^2+\norm{\w}^2) \,dt \\
		&\leq C\varepsilon(\supt\norm{\ph}^2+\supt\norm{\w}^2).
	\end{aligned}
\end{equation}
	By \eqref{hterm}, one has $|\sigma(\mathbf{v}|\mathbf{\tilde{v}})|\leq C|\p_{\xi}\Phi|^2$. Then, we can get from \eqref{small1} and Lemma  \ref{decay-error} that,
	\begin{equation}\label{nondegenerate1}
		\begin{aligned}
			&\int_{0}^{\T} \int_{\xi_{-1}+\X(t)}^{+\infty}|\mathbf{I}_{1,2}|\,d\xi\,dt\leq\int_{0}^{\T} \int_{\xi_{-1}+\X(t)}^{+\infty}|(|\p_\xi\Phi|^2+|\mathbf{H}_2|+|\p_{\xi}\mathbf{H}_1|)|\w|\,d\xi\,dt\\
			&\leq C\supt\norm{W}_{L^{\infty}(\R)}\int_{0}^{\T}\norm{\p_{\xi}\Phi}^2\,dt+C\int_{0}^{\T} (\norm{\p_{\xi}\mathbf{H}_1}+\norm{\mathbf{H}_2})\norm{\w} \,dt\\
			&\leq C\delta\int_{0}^{\T}\norm{\p_{\xi}\Phi}^2\,dt+C\varepsilon\int_{0}^{\T} e^{-\alpha t}\norm{\w} \,dt\\
			&\leq C\delta\int_{0}^{\T}\norm{\p_{\xi}\Phi}^2\,dt+C\varepsilon+C\varepsilon \supt\norm{\w}^2,
		\end{aligned}
	\end{equation}
	and
	\begin{equation}\label{nondegenerate2}
		\begin{aligned}
			\int_{0}^{\T} \int_{\xi_{-1}+\X(t)}^{+\infty}|\mathbf{I}_{1,3}|\,d\xi\,dt\leq C\int_{0}^{\T}\norm{\mathbf{H}_1}\norm{\ph}  \,dt\leq C\varepsilon+C\varepsilon \supt\norm{\ph}^2.
		\end{aligned}
	\end{equation}
	By \eqref{traveling-wave-decay}, we have
	\begin{equation}\label{nondegenerate3}
		\begin{aligned}
			\int_{0}^{\T} \int_{\xi_{-1}+\X(t)}^{+\infty}|\mathbf{I}_{1,4}|\,d\xi\,dt&\leq C\int_{0}^{\T} \int_{\xi_{-1}+\X(t)}^{+\infty}|\mathbf{\tilde{v}}-\vsx||\p_{\xi}\ph||\w|\,d\xi\,dt\\
			&\leq C\varepsilon\int_{0}^{\T}e^{-\alpha t}(\norm{\p_{\xi}\ph}^2+\norm{\w}^2)  \,dt\\
			&\leq C\varepsilon\int_{0}^{\T} \norm{\p_{\xi}\ph}^2 \,dt+C\varepsilon\supt\norm{\w}^2.
		\end{aligned}
	\end{equation}
	Similarly, by \eqref{traveling-wave-decay} and \eqref{X'decay}, one has that
	\begin{equation}\label{nondegenerate4}
		\begin{aligned}
			\int_{0}^{\T} \int_{\xi_{-1}+\X(t)}^{+\infty}|\mathbf{I}_{1,5}|\,d\xi\,dt&\leq C\int_{0}^{\T} \int_{\xi_{-1}+\X(t)}^{+\infty}|\mathbf{\tilde{v}}-\vsx||\ph||\w|\,d\xi\,dt\\
			&\leq C\varepsilon\int_{0}^{\T} e^{-\alpha t}(\norm{\ph}^2+\norm{\w}^2) \,dt\\
			&\leq C\varepsilon(\supt\norm{\ph}^2+\supt\norm{\w}^2).
		\end{aligned}
	\end{equation}
	Plugging \eqref{flux-term1},\eqref{X'est}-\eqref{nondegenerate4} into \eqref{pst-part} and using \eqref{XYZ-cond} for $\vsx\in[v_{-1},\avr]$, we have
	\begin{equation}\label{pst-integral}
		\begin{aligned}
			&\int_{\xi_{0}+\X(t)}^{+\infty}\left(\frac{a(\vsx)}{2}\ph^2+\frac{w(\vsx)}{2}\w^2\right)(t)\,d\xi+\int_{0}^{\T}\int_{\xi_{0}+\X(t)}^{+\infty}(\vsx)'|\sigma''(\vsx)|\w^2\,d\xi dt\\
			&\leq\int_{\xi_{-1}+\X(t)}^{+\infty}\left(\frac{a(\vsx)}{2}\ph^2+\frac{w(\vsx)}{2}\w^2\right)(t)\,d\xi+\int_{0}^{\T}\int_{\xi_{-1}+\X(t)}^{+\infty}(\vsx)'|\sigma''(\vsx)|\w^2\,d\xi dt\\
			&\leq C\varepsilon(\supt\norm{\ph}^2+\supt\norm{\w}^2)+C\varepsilon+C\delta\int_{0}^{\T}\norm{\p_{\xi}\Phi}^2\,dt
		\end{aligned}
	\end{equation}
	where the first inequality follows from the non-negative of integrands.
	
	 Next, we integrate \eqref{transe} $[0,\T]\times[-\infty,\xi_0+\X(t))$ and $[0,\T]\times[\xi_0+\X(t),\xi_d+\X(t))$, and add the resulting equations together to yield that
	 \begin{equation}\label{neg-part}
	 	\begin{aligned}
	 		&\int_{-\infty}^{\xi_d+\X(t)}\left(\frac{\hat{a}(\vsx)}{2}\ph^2+\frac{\hat{w}(\vsx)}{2}\w^2\right)(t)\,d\xi+\int_{0}^{\T}\int_{\xi_{-1}+\X(t)}^{+\infty}\mu \hat{a}(\vsx)(\p_{\xi}\ph)^2\,d\xi dt\\
	 		&+\int_{0}^{\T}\int_{-\infty}^{\xi_d+\X(t)}\mu X(\vsx)\ph^2+\frac{a(\vsx)}{s}(\vsx)'Y(\vsx)(s\ph+\w)^2+(\vsx)'Z(\vsx)\w^2\,d\xi dt\\
	 		&=\sum_{i=1}^{6}\int_{0}^{\T}\int_{-\infty}^{\xi_d+\X(t)}\textbf{I}_{1,i}\,d\xi dt.
	 	\end{aligned}
	 \end{equation}
	
	Note that $T(\vsx),w(\vsx) \in C^1[\avl,v_d]$, it is easy to check (c.f. Appendix) that
	\begin{equation}\label{interact2}
		\zeta(-\infty)=0,\quad \zeta(\xi_d+\X(t))\leq 0.
	\end{equation}
	Therefore, 
	\begin{equation}\label{flux-term2}
		\int_{0}^{\T}\int_{-\infty}^{\xi_{d}+\X(t)}\textbf{I}_{1,6}\,d\xi dt\leq 0.
	\end{equation}
%

Similar to \eqref{X'est}-\eqref{nondegenerate4}, we have the following estimates for other terms on the right hand side of \eqref{neg-part}
	\begin{equation}\label{hX'est}
		\begin{aligned}
			\int_{0}^{\T}\int_{-\infty}^{\xi_{d}+\X(t)}|\mathbf{I}_{1,1}|\,d\xi\,dt\leq C\varepsilon(\supt\norm{\ph}^2+\supt\norm{\w}^2),
		\end{aligned}
	\end{equation}
	\begin{equation}\label{hnondegenerate1}
			\int_{0}^{\T} \int_{-\infty}^{\xi_{d}+\X(t)}|\mathbf{I}_{1,2}|\,d\xi\,dt\leq C\delta\int_{0}^{\T}\norm{\p_{\xi}\Phi}^2\,dt+C\varepsilon+C\varepsilon \supt\norm{\w}^2,
	\end{equation}
	\begin{equation}\label{hnondegenerate2}
		\begin{aligned}
			\int_{0}^{\T} \int_{-\infty}^{\xi_{d}+\X(t)}|\mathbf{I}_{1,3}|\,d\xi\,dt\leq C\varepsilon+C\varepsilon \supt\norm{\ph}^2,
		\end{aligned}
	\end{equation}
	\begin{equation}\label{hnondegenerate3}
		\begin{aligned}
			\int_{0}^{\T} \int_{-\infty}^{\xi_{d}+\X(t)}|\mathbf{I}_{1,4}|\,d\xi\,dt
			\leq C\varepsilon\int_{0}^{\T} \norm{\p_{\xi}\ph}^2 \,dt+C\varepsilon\supt\norm{\w}^2,
		\end{aligned}
	\end{equation}
	\begin{equation}\label{hnondegenerate4}
		\begin{aligned}
			\int_{0}^{\T} \int_{-\infty}^{\xi_{d}+\X(t)}|\mathbf{I}_{1,5}|\,d\xi\,dt\leq C\varepsilon(\supt\norm{\ph}^2+\supt\norm{\w}^2).
		\end{aligned}
	\end{equation}
	Plugging \eqref{flux-term2}-\eqref{hnondegenerate4} into \eqref{neg-part} and using \eqref{XYZ-cond} for $\vsx\in[\avl,v_d]$, we have
	\begin{equation}\label{neg-integral}
		\begin{aligned}
			&\int_{-\infty}^{\xi_{0}+\X(t)}\left(\frac{\hat{a}(\vsx)}{2}\ph^2+\frac{\hat{w}(\vsx)}{2}\w^2\right)(t)\,d\xi+\int_{-\infty}^{\xi_{d}+\X(t)}(\vsx)'|\sigma''(\vsx)|\w^2\,d\xi dt\\
			&\leq\int_{-\infty}^{\xi_{0}+\X(t)}\left(\frac{\hat{a}(\vsx)}{2}\ph^2+\frac{\hat{w}(\vsx)}{2}\w^2\right)(t)\,d\xi+\int_{-\infty}^{\xi_{d}+\X(t)}(\vsx)'|\sigma''(\vsx)|\w^2\,d\xi dt\\
			&\leq C\varepsilon(\supt\norm{\ph}^2+\supt\norm{\w}^2)+C\varepsilon+C\delta\int_{0}^{\T}\norm{\p_{\xi}\Phi}^2\,dt
		\end{aligned}
	\end{equation}
	Note that $w(\vsx)$ and $\wh(\vsx)$ have uniform lower bound $min\big\{\frac{1}{2H},\frac{(v_d)^2}{\sigma'(0)}\big\}$ whenever $\vsx \in [0,\avr]$ and $\vsx \in [\avl,0]$, and $T$ is also bounded from below and above so that $\norm{\ph,\w} \sim \norm{\Phi,W}$.  Summing \eqref{pst-integral} with \eqref{neg-integral} and using the lower bounds of $w$ and $a$, we can get \eqref{transformestimate}.
%
%
%
\end{proof}
\subsubsection{Proof of Lemma \ref{lowterm1}}\label{pflow1}
To prove Lemma \ref{lowterm1}, it suffices to control $\int_{0}^{\T}\|\p_\xi\Phi\|^2\,d\xi dt$. Multiplying \subeqref{mainsystem}{1} by $\Phi$ and  \subeqref{mainsystem}{2} by $\frac{W}{\sigma'(\vsx)}$, and summing together,
we have
\begin{equation}\label{lowlemma}
	\begin{aligned}
		&\p_t\left(\frac{\Phi^2}{2}+\frac{W^2}{2\sigma'(\vsx)}\right)+\mu(\p_{\xi}\Phi)^2\\
		&=\underbrace{\frac{\X'(\vsx)'\sigma''(\vsx)}{2(\sigma'(\vsx))^2}W^2}_{\mathbf{I}_{2,1}}+\underbrace{\frac{\sigma'(\mathbf{\tilde{v}})-\sigma'(\vsx)}{\sigma'(\vsx)}\p_{\xi}\Phi W+\frac{\sigma(\mathbf{v}|\mathbf{\tilde{v}})}{\sigma'(\vsx)W}}_{\mathbf{I}_{2,2}}\\
		&\quad+\underbrace{\frac{\mathbf{H}_2-\mu\p_{\xi}\mathbf{H}_1}{\sigma'(\vsx)}W+\mathbf{H}_1\Phi}_{\mathbf{I}_{2,3}}+\underbrace{\frac{s\sigma''(\vsx)(\vsx)'}{2(\sigma'(\vsx))^2}W^2}_{\mathbf{I}_{2,4}}+\p_{\xi}\left\{\dots\right\},
	\end{aligned}
\end{equation}
where $\left\{\dots\right\}=\frac{s\Phi^2}{2}+W\Phi+\frac{sW^2}{2\sigma'(\vsx)}+\mu\p_{\xi}\Phi\Phi$, which vanishes at far fields $\xi=\pm\infty$. By \eqref{X'decay}, we have
\begin{equation}\label{low1}
	\int_{0}^{\T} \int_{\R} |{\mathbf{I}_{2,1}}| \,d\xi\,dt \leq C\varepsilon\int_{0}^{\T} e^{-\alpha t}\norm{W}^2 \,dt\leq C\varepsilon\supt\norm{W}^2.
\end{equation}
It follows from \eqref{traveling-wave-decay},  $|\sigma(\mathbf{v}|\mathbf{\tilde{v}})|\leq C|\p_{\xi}\Phi|^2$, and \eqref{small1} that
\begin{equation}\label{I2.1}
	\begin{aligned}
		\int_{0}^{\T} \int_{\R}|\mathbf{I}_{2,2}| \,d\xi  \,dt&\leq C\varepsilon\int_{0}^{\T}\int_{\R} e^{-\alpha t}|\p_{\xi}\Phi W| \,d\xi\,dt +C\delta\int_{0}^{\T}\norm{\p_{\xi}\Phi}^2\,dt\\
		&\leq C(\varepsilon+\delta)\int_{0}^{\T}\norm{\p_{\xi}\Phi}^2\,dt+C\varepsilon\supt\norm{W}^2.
	\end{aligned}
\end{equation}
We can obtain from using Lemma \ref{decay-error} that
\begin{equation}\label{I2.2}
	\begin{aligned}
		\int_{0}^{\T} \int_{\R}|\mathbf{I}_{2,3}|  \,d\xi  \,dt&\leq C\int_{0}^{\T}(\norm{\mathbf{H}_2}+\norm{\p_{\xi}\mathbf{H}_1})\norm{W}+\norm{\mathbf{H}_1}\norm{\Phi}  \,dt \\
		&\leq C\varepsilon+C\varepsilon(\supt\norm{W}^2+\supt\norm{\Phi}^2).
	\end{aligned}
\end{equation}
Moreover, by lemma \ref{construction}, we have
\begin{equation}\label{I2.3}
	\begin{aligned}
		&\int_{0}^{\T} \int_{\R}|\mathbf{I}_{2,4}| \,d\xi  \,dt \leq C(\varepsilon+\norm{\Phi_0,W_0}^2+\delta\int_{0}^{\T} \norm{\p_{\xi}\Phi}^2 \,dt ).
	\end{aligned}
\end{equation}
 Integrating \eqref{lowlemma} over $\R\times[0,\T]$, plugging in the above estimates, and choosing $\varepsilon_0,\delta_0$ small enough, we can 
prove Lemma \ref{lowterm1}.
\subsection{Higher-order Energy Estimates}\label{3.2sec}
Now we start to prove the higher-order estimates of solution $(\Phi,W)$ to \eqref{mainsystem}. Due to the non-convexity of constitutive relation,  we still can not directly
use the energy estimate method as in \cite{HuangYuanCMP} to control the first order energy.  However, Lemma \ref{lowterm1} allows us to apply the weighted energy estimates method via a newly constructed weight function that guarantees the term involving $(\partial_{\xi}W)^2$ is positive. The following lemma gives the first order energy estimates of $(\Phi, W)$
\begin{lemma}\label{lowterm2}
	Under the assumptions of Proposition \ref{mainpro}, there exist $\varepsilon_0>0$ and $\delta_0>0$ such that if $\varepsilon<\varepsilon_0$ and $\delta<\delta_0$, then
	\begin{equation}\label{estimate2}
		\supt\norm{\p_{\xi}(\Phi,W)}^2(t)+\int_{0}^{\T}\norm{\p^2_{\xi}\Phi}^2 \,dt\leq C\varepsilon+C\norm{(\Phi_0,W_0)}^2_1,
	\end{equation}
	where $C>0$ is independent of $\varepsilon,\delta$ and $\T$.
\end{lemma}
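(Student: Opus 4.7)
The plan is to differentiate the system \eqref{mainsystem} once in $\xi$ and carry out a weighted $L^2$ energy estimate on the resulting system for $(\partial_\xi\Phi,\partial_\xi W)$, with the goal of extracting the viscous dissipation $\mu(\partial_\xi^2\Phi)^2$ from the first equation. Differentiating \eqref{mainsystem} in $\xi$ yields
\begin{align*}
	\partial_t\partial_\xi\Phi-s\partial_\xi^2\Phi-\partial_\xi^2 W&=\mu\partial_\xi^3\Phi+\partial_\xi\mathbf{H}_1,\\
	\partial_t\partial_\xi W-s\partial_\xi^2 W-\sigma'(\tilde{\mathbf{v}})\partial_\xi^2\Phi-\sigma''(\tilde{\mathbf{v}})\partial_\xi\tilde{\mathbf{v}}\,\partial_\xi\Phi&=\partial_\xi\sigma(\mathbf{v}|\tilde{\mathbf{v}})+\partial_\xi\mathbf{H}_2-\mu\partial_\xi^2\mathbf{H}_1.
\end{align*}
I would multiply the first equation by $\sigma'(\vsx)w_1(\vsx)\partial_\xi\Phi$ and the second by $w_1(\vsx)\partial_\xi W$, where $w_1$ is a new weight to be designed, and then sum. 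With this symmetrization, the cross products $-\sigma' w_1\,\partial_\xi\Phi\,\partial_\xi^2 W$ and $-\sigma' w_1\,\partial_\xi^2\Phi\,\partial_\xi W$ telescope into a spatial flux modulo a remainder proportional to $(\sigma' w_1)'(\vsx)'\,\partial_\xi\Phi\,\partial_\xi W$; integration by parts on the viscous and transport terms produces the dissipation $\mu\sigma' w_1(\partial_\xi^2\Phi)^2$ together with a coefficient $(\vsx)'\bigl(\tfrac{s}{2}w_1'(\vsx)-\tfrac{\sigma''(\vsx)}{2\sigma'(\vsx)}w_1(\vsx)\bigr)$ in front of $(\partial_\xi W)^2$.

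The crux is designing $w_1$ so that the $(\partial_\xi W)^2$-coefficient above is non-negative on all of $[\avl,\avr]$; this is the ``newly constructed weight function'' mentioned by the authors. The required sign condition $(\log w_1)'(\vsx)\ge\sigma''(\vsx)/(s\sigma'(\vsx))$ is a scalar ODE inequality easy to satisfy pointwise but requires care across the inflection point $v=0$ and near $\avr$ where $\sigma'(\vsx)\to s^2$. Following the splitting philosophy of section \ref{construction of function}, I would build $w_1$ piecewise smoothly on suitable sub-intervals of $[\avl,\avr]$ so that the inequality holds strictly on each piece, and arrange the jumps of the associated boundary flux to be non-positive at the patching points, exactly as in \eqref{interact}--\eqref{interact2}. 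The extra source $\sigma''(\tilde{\mathbf{v}})\partial_\xi\tilde{\mathbf{v}}\,\partial_\xi\Phi$ in the differentiated second equation, after multiplication by $w_1\,\partial_\xi W$, is absorbed by Young's inequality into a small fraction of $\mu\sigma' w_1(\partial_\xi^2\Phi)^2$ plus the non-negative $(\partial_\xi W)^2$-reservoir, using $\tilde{\mathbf{v}}\to\vsx$ in $L^\infty$ with rate $\varepsilon e^{-\alpha t}$ by \eqref{traveling-wave-decay}.

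The remaining source terms are routine: the $\X'(t)$-correction on $\partial_t w_1(\vsx)$ is handled by Lemma \ref{XYdecay}; the nonlinear remainder $\partial_\xi\sigma(\mathbf{v}|\tilde{\mathbf{v}})$, bounded using $|\sigma(\mathbf{v}|\tilde{\mathbf{v}})|\lesssim|\partial_\xi\Phi|^2$, the smallness $\delta<\delta_0$, and Sobolev interpolation, contributes at most $C\delta\int_0^{\T}\norm{\partial_\xi^2\Phi}^2\,dt$; the error contributions from $\partial_\xi\mathbf{H}_1,\partial_\xi\mathbf{H}_2,\partial_\xi^2\mathbf{H}_1$ decay exponentially by Lemma \ref{decay-error}; and the already-controlled $\int_0^{\T}\norm{\partial_\xi\Phi}^2\,dt$ from Lemma \ref{lowterm1} closes any residual $\varepsilon$-small coupling. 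Combining everything and choosing $\varepsilon_0,\delta_0$ small yields \eqref{estimate2}. The main obstacle is the weight construction itself — achieving the sign condition on $w_1'/w_1-\sigma''/(s\sigma')$ uniformly across the non-convex region without any shock-amplitude smallness — which is the direct analogue of the delicate piecewise construction already carried out in section \ref{construction of function} for the zeroth-order weight.
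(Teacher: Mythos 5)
Your overall strategy --- a first-order weighted energy estimate that extracts the dissipation $\mu(\p_\xi^2\Phi)^2$, absorbs every remainder proportional to $(\p_\xi\Phi)^2$ by the bound on $\int_0^\T\norm{\p_\xi\Phi}^2\,dt$ from Lemma \ref{lowterm1}, and treats the shift and error terms with Lemma \ref{XYdecay}, \eqref{traveling-wave-decay} and Lemma \ref{decay-error} --- is essentially the paper's proof of Lemma \ref{lowterm2}; the paper merely avoids differentiating the system by multiplying \eqref{mainsystem} directly by $-\at(\vsx)\p_\xi^2\Phi$ and $-\wt(\vsx)\p_\xi^2 W$, which coincides with your differentiate-then-multiply computation up to integration by parts (compare \eqref{relt2}--\eqref{relt3}).

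Where you go astray is in identifying the role of the non-convexity, and hence in the weight you think you must build. With your own multipliers $\sigma'(\vsx)w_1(\vsx)\p_\xi\Phi$ and $w_1(\vsx)\p_\xi W$, the antisymmetric pair coming from $-\p_\xi^2W$ and $-\sigma'(\tilde{\mathbf{v}})\p_\xi^2\Phi$ leaves, after integration by parts, the remainder $(\sigma'w_1)'(\vsx)'\,\p_\xi\Phi\,\p_\xi W$, while the differentiated source $-\sigma''(\tilde{\mathbf{v}})\p_\xi\tilde{\mathbf{v}}\,\p_\xi\Phi$ contributes, to leading order, $-\sigma''(\vsx)w_1(\vsx)(\vsx)'\,\p_\xi\Phi\,\p_\xi W$. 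Since $(\sigma'w_1)'=\sigma'w_1'+\sigma''w_1$, the $\sigma''$-parts cancel, and the only surviving cross term is $\sigma'(\vsx)w_1'(\vsx)(\vsx)'\,\p_\xi\Phi\,\p_\xi W$; the coefficient in front of $(\p_\xi W)^2$ is therefore just $\tfrac{s}{2}w_1'(\vsx)(\vsx)'$, not $(\vsx)'\bigl(\tfrac{s}{2}w_1'-\tfrac{\sigma''}{2\sigma'}w_1\bigr)$. Consequently the only requirements on the weight are the monotonicity conditions $w_1'\geq 0$ and $(\sigma'w_1)'\geq 0$ along the profile, which the paper meets with the simple explicit $C^1$ weight \eqref{newweight} (i.e. $\wt=\sigma'/(\sigma'(0))^2$ on $[0,\avr]$, $\wt=1/\sigma'$ on $[\avl,0]$, $\at=\wt\sigma'$): there is no analogue at this order of the sign condition $(\log w_1)'\geq\sigma''/(s\sigma')$, no piecewise construction, and no jump-flux bookkeeping as in \eqref{interact}--\eqref{interact2} (and even your stronger condition would be solvable by a single smooth weight, e.g. $w_1=(\sigma')^{1/s}$, so the ``main obstacle'' you defer to does not exist). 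The genuinely indispensable ingredient is the one you do invoke, namely Lemma \ref{lowterm1}, which is exactly how the paper disposes of the Young remainders $\mathbf{I}_{3,5}$ in \eqref{relt3}; with the coefficient corrected, the rest of your argument (the $\p_\xi\sigma(\mathbf{v}|\tilde{\mathbf{v}})$ bound as in \eqref{high1.2}, the exponentially decaying errors) closes as in the paper.
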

\begin{proof}
	Let $\tilde{w}(\vsx)$ be a weight function defined by
	\begin{equation}\label{newweight}
		\tilde{w}(\vsx)=
		\begin{cases}
			\frac{\sigma'(\vsx)}{(\sigma'(0))^2},\quad \vsx \in [0,\avr]\\
			\frac{1}{\sigma'(\vsx)},\quad \vsx \in [\avl,0],
		\end{cases}
	\end{equation}
and let $\at(\vsx):=\wt(\vsx)\sigma'(\vsx)$. In view of \eqref{SC}-\eqref{SC2}, it is easy to show that
\begin{equation}
	\tilde{w}'(\vsx) \geq 0,\quad \tilde{a}'(\vsx) \geq 0, \text{ on } [\avl,\avr].
\end{equation}
Multiplying \subeqref{mainsystem}{1}, \subeqref{mainsystem}{2} by $-\at(\vsx)\p^2_{\xi}\Phi$ and $-\wt(\vsx)\p^2_{\xi}W$ respectively, and summing the resulting equations, we have
	\begin{equation}\label{relt2}
		\begin{aligned}
			&\p_t\left(\frac{\at(\vsx)}{2}(\p_{\xi}\Phi)^2+\frac{\wt(\vsx)}{2}(\p_{\xi}W)^2\right)+\mu\at(\vsx)(\p^2_{\xi}\Phi)^2\\
			&+\frac{s(\vsx)'\at'(\vsx)}{2}(\p_{\xi}\Phi)^2+\frac{s(\vsx)'\wt'(\vsx)}{2}(\p_{\xi}W)^2\\
			&=-\sigma'(\vsx)\wt'(\vsx)(\vsx)'\p_{\xi}\Phi\p_{\xi}W-\mu\at'(\vsx)(\vsx)'\p^2_{\xi}\Phi\p_{\xi}\Phi\\
			&+\underbrace{(\p_{\xi}\mathbf{H}_2-\mu\p^2_{\xi}\mathbf{H}_1)\wt(\vsx)\p_{\xi}W-\mathbf{H}_1\at'(\vsx)(\vsx)'\p_{\xi}\Phi-\mathbf{H}_1\at(\vsx)\p^2_{\xi}\Phi}_{\mathbf{I}_{3,1}}\\
			&+\underbrace{\p_{\xi}\sigma(\mathbf{v}|\mathbf{\tilde{v}})\wt(\vsx)\p_{\xi}W+(\sigma'(\mathbf{\tilde{v}})-\sigma'(\vsx))\wt(\vsx)\p^2_{\xi}\Phi\p_{\xi}W}_{\mathbf{I}_{3,2}}\\
			&+\underbrace{(\p_{\xi}\mathbf{\tilde{v}}-(\vsx)')\sigma''(\vsx)\wt(\vsx)\p_{\xi}W \p_{\xi}\Phi+(\sigma''(\mathbf{\tilde{v}})-\sigma''(\vsx))\wt(\vsx)(\vsx)'\p_{\xi}W\p_{\xi}\Phi,}_{\mathbf{I}_{3,3}}\\
			&+\underbrace{\frac{\X'(\vsx)'[(\p_{\xi}\Phi)^2\at'(\vsx)+\wt'(\vsx)(\p_{\xi}W)^2]}{2}}_{\mathbf{I}_{3,4}}+\p_{\xi}\left\{\dots\right\}
		\end{aligned}
	\end{equation}
	where
	\begin{align*}
		\left\{\dots\right\}=&\at(\vsx)\p_t\Phi\p_{\xi}\Phi-\frac{s\at(\vsx)(\p_{\xi}\Phi)^2}{2}-\at(\vsx)\p_{\xi}\Phi\p_{\xi}W+\wt(\vsx)\p_tW\p_{\xi}W\\&-\frac{s\wt(\vsx)(\p_{\xi}W)^2}{2}-(\sigma'(\mathbf{\tilde{v}})-\sigma'(\vsx))\wt(\vsx)\p_{\xi}W\p_{\xi}\Phi-\sigma(\mathbf{v}|\mathbf{\tilde{v}})\wt(\vsx)\p_{\xi}W\\
		&-(\mathbf{H}_2-\mu\p_{\xi}\mathbf{H}_1)\wt(\vsx)\p_{\xi}W,
	\end{align*}
which vanishes at far fields $\xi=\pm\infty$.
	By using Young inequality, one has
	\begin{equation}\label{cauchyine}
		\begin{aligned}
			&|\wt'(\vsx)\sigma'(\vsx)(\vsx)'\p_{\xi}\Phi\p_{\xi}W|\leq \frac{s\wt'(\vsx)(\vsx)'(\p_{\xi}W)^2}{4}+\frac{\wt'(\vsx)(\sigma'(\vsx))^2(\vsx)'(\p_{\xi}\Phi)^2}{s},\\
			&|\mu\at'(\vsx)(\vsx)'\p^2_{\xi}\Phi\p_{\xi}\Phi|\leq \frac{\mu\at(\vsx)}{4}(\p^2_{\xi}\Phi)^2+\frac{\mu(\at'(\vsx))^2((\vsx)')^2}{\at(\vsx)}(\p_{\xi}\Phi)^2,
		\end{aligned}
	\end{equation}
	Plugging \eqref{cauchyine} into \eqref{relt2}, we have
	\begin{equation}\label{relt3}
		\begin{aligned}
			&\p_t\left(\frac{\at(\vsx)}{2}(\p_{\xi}\Phi)^2+\frac{\wt(\vsx)}{2}(\p_{\xi}W)^2\right)+\frac{3\mu}{4}\at(\vsx)(\p^2_{\xi}\Phi)^2\\
			&+\frac{s(\vsx)'\at'(\vsx)}{2}(\p_{\xi}\Phi)^2+\frac{s(\vsx)'\wt'(\vsx)}{4}(\p_{\xi}W)^2\\
			&\leq\p_{\xi}\left\{\dots\right\}+\mathbf{I}_{3,1}+\mathbf{I}_{3,2}+\mathbf{I}_{3,3}+\mathbf{I}_{3,4}\\
			&+\underbrace{\left(\frac{\wt'(\vsx)(\sigma'(\vsx))^2(\vsx)'}{s}+\frac{\mu(\at'(\vsx))^2((\vsx)')^2}{\at(\vsx)}\right)(\p_{\xi}\Phi)^2}_{\mathbf{I}_{3,5}}.
		\end{aligned}
	\end{equation}
	Since $\mathbf{\tilde{v}}(\xi,t)=\mathbf{v}_l(\xi,t)(1-g_{\X}(\xi))+\mathbf{v}_rg_{\X}(\xi)$, it follows from Lemma \ref{decay} and \ref{XYdecay} that
		\begin{align}\label{vsx'decay}
			\norm{\p_{\xi}\mathbf{\tilde{v}}-(\vsx)'}_{L^{\infty}(\R)}&\leq C(\norm{\p_{\xi}\mathbf{v}_l}_{L^{\infty}(\R)}+\norm{\p_{\xi}\mathbf{v}_r}_{L^{\infty}(\R)}+\norm{\mathbf{v}_r-\avr}_{L^{\infty}(\R)}+\norm{\mathbf{v}_l-\avl}_{L^{\infty}(\R)})\nonumber\\
			&\leq C\varepsilon e^{-\alpha t}.
		\end{align}
	It follows from Lemma \ref{decay-error} that
		\begin{align}\label{high1.1}
		\int_{0}^{\T}\int_{\R}|\mathbf{I}_{3,1}|\,d\xi\,dt&\leq C\int_{0}^{\T} (\norm{\p_{\xi}\mathbf{H}_2}+\norm{\p^2_{\xi}\mathbf{H}_1})\norm{\p_{\xi}W}+(\norm{\p_{\xi}\Phi}+\norm{\p^2_{\xi}\Phi})\norm{\mathbf{H}_1} \,dt\nonumber\\
		&\leq C\varepsilon\int_{0}^{\T}e^{-\alpha t} (\norm{\p_{\xi}W}+\norm{\p_{\xi}\Phi}+\norm{\p^2_{\xi}\Phi}) \,dt\\
		&\leq C\varepsilon(\supt\norm{\p_{\xi}W,\p_{\xi}\Phi})+C\varepsilon\int_{0}^{T}\norm{\p^2_{\xi}\Phi}^2\,dt.\nonumber
		\end{align}
	By \eqref{hterm}, one has that $|\p_{\xi}\sigma(\mathbf{v}|\mathbf{\tilde{v}})|\leq C(\abs{\p_{\xi}\Phi}^2+\abs{\p_{\xi}\Phi}\abs{\p^2_{\xi}\Phi})$. Thus, it follows from \eqref{small1}, \eqref{traveling-wave-decay} and 
	Sobolev inequality that
		\begin{align}\label{high1.2}
			&\int_{0}^{\T}\int_{\R}|\mathbf{I}_{3,2}|\,d\xi\,dt\leq C\int_{0}^{\T}\int_{\R}(\abs{\p_{\xi}\Phi}^2+\abs{\p_{\xi}\Phi}\abs{\p^2_{\xi}\Phi})|\p_{\xi}W|+|\mathbf{\tilde{v}}-\vsx||\p^2_{\xi}\Phi\p_{\xi}W|\,d\xi\,dt\nonumber\\
			&\leq C\int_{0}^{\T}\int_{\R}\norm{\p_{\xi}{\Phi}}_{L^{\infty}}(\abs{\p_{\xi}\Phi}+\abs{\p^2_{\xi}\Phi})\abs{\p_{\xi}W}  \,d\xi\,dt+C \varepsilon\int_{0}^{\T} e^{-\alpha t}(\norm{\p^2_{\xi}\Phi}^2+\norm{\p_{\xi}W}^2)\,dt\nonumber\\
			&\leq C\supt\norm{\p_{\xi}W}\int_{0}^{\T} \norm{\p_{\xi}\Phi}^{\frac{1}{2}}\norm{\p^2_{\xi}\Phi}^{\frac{1}{2}}(\norm{\p_{\xi}\Phi}+\norm{\p^2_{\xi}\Phi})\,dt\\
			&\quad+C\varepsilon\int_{0}^{\T}\norm{\p^2_{\xi}\Phi}^2\,dt+C\varepsilon\supt\norm{\p_{\xi}W}^2\nonumber\\
			&\leq C(\delta+\varepsilon)\int_{0}^{\T}\norm{\p^2_{\xi}\Phi}^2\,dt+C\varepsilon\supt\norm{\p_{\xi}W}^2+C\int_{0}^{\T}\norm{\p_{\xi}\Phi}^2\,dt.\nonumber
		\end{align}
	By \eqref{traveling-wave-decay}, \eqref{vsx'decay}, we have 
	\begin{equation}\label{high1.3}
			\int_{0}^{\T}\int_{\R}|\mathbf{I}_{3,3}|\,d\xi\,dt\leq C\varepsilon\int_{0}^{\T}e^{-\alpha t}(\norm{\p_{\xi}W}^2+\norm{\p_{\xi}\Phi}^2)\,dt\leq C\varepsilon\supt\norm{(\p_{\xi}\Phi,\p_{\xi}W)}^2.
	\end{equation}
	Similar to \eqref{low1}, it follows from \eqref{X'decay} that
	\begin{equation}\label{low2}
			\int_{0}^{\T}\int_{\R}|\mathbf{I}_{3,4}|\,d\xi \,dt\leq C\varepsilon\int_{0}^{\T}e^{-\alpha t} (\norm{\p_{\xi}\Phi}^2+\norm{\p_{\xi}W}^2)\,dt \leq C\varepsilon\supt\norm{(\p_{\xi}\Phi,\p_{\xi}W)}^2,
	\end{equation}
	By using Lemma \ref{lowterm1}, we have
	\begin{equation}\label{P2}
			\int_{0}^{\T}\int_{\R} \abs{\mathbf{I}_{3,5}}\,d\xi\,dt\leq C\int_{0}^{\T}\norm{\p_{\xi}\Phi}^2\,dt \leq C(\norm{(\Phi_0,W_0)}^2+\varepsilon).
	\end{equation}
	Integrating \eqref{relt3} over $\R \times [0,\T]$,  one can get
	\eqref{estimate2} by substituting \eqref{high1.1}-\eqref{P2}, and choosing $\varepsilon_0$ and $\delta_0$ sufficiently small.
\end{proof}

Now we give the estimates $\p^2_{\xi}\Phi$ and the derivatives of $\Psi$.
\begin{lemma}\label{lowterm3}
	Under the assumptions of Proposition \ref{mainpro}, there exist $\varepsilon_0$ and $\delta_0$ such that if $\varepsilon<\varepsilon_0$ and $\delta<\delta_0$, then
	\begin{align*}
		\supt(\norm{\p^2_{\xi}\Phi}^2+\norm{\Psi}^2_2)(t)+\int_{0}^{\T} \norm{\p_{\xi}\Psi}^2_2(t)\,dt\leq C(\norm{(\Phi_0,\Psi_0)}^2_2+\varepsilon),
	\end{align*}
	where $C>0$ is independent of $\varepsilon,\delta$ and $\T$.
\end{lemma}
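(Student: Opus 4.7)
The plan is to exploit the parabolic structure of the second equation in \eqref{system2}, namely
$\p_t\Psi - \mu\p_{\xi}^2\Psi = s\p_{\xi}\Psi + \sigma'(\mathbf{\tilde v})\p_\xi\Phi + \sigma(\mathbf v|\mathbf{\tilde v}) + \mathbf H_2$,
to derive the higher-order estimates for $\Psi$, and then to transfer these to $\p_\xi^2\Phi$ via the algebraic identity $\mu\p_\xi^2\Phi = \p_\xi\Psi - \p_\xi W$ coming from $W = \Psi - \mu\p_\xi\Phi$ in \eqref{w}. In this way the non-convexity of $\sigma$, already tamed at the low-order level by Lemmas \ref{lowterm1}--\ref{lowterm2}, does not have to be revisited: only the benign linear-parabolic structure of the $\Psi$-equation is used.

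First I would multiply \subeqref{system2}{2} by $-\p_\xi^2\Psi$ and integrate over $\R\times[0,\T]$. Integration by parts on the time-derivative term yields $\tfrac{d}{dt}\tfrac12\|\p_\xi\Psi\|^2$, the transport term $-s\p_\xi\Psi\cdot(-\p_\xi^2\Psi)$ vanishes after a spatial integration by parts, and the diffusion produces the dissipation $\mu\|\p_\xi^2\Psi\|^2$. On the right-hand side, the cross term $\sigma'(\mathbf{\tilde v})\p_\xi\Phi\cdot\p_\xi^2\Psi$ is controlled via Young's inequality by $\eta\|\p_\xi^2\Psi\|^2 + C_\eta\|\p_\xi\Phi\|^2$; the nonlinearity uses $|\sigma(\mathbf v|\mathbf{\tilde v})|\leq C|\p_\xi\Phi|^2$ together with \eqref{small1} to absorb a factor of $\delta$; and the error term $\mathbf H_2\cdot\p_\xi^2\Psi$ is handled by Lemma \ref{decay-error}. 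Choosing $\eta$ small and integrating in $t$ gives $\supt\|\p_\xi\Psi\|^2 + \int_0^\T\|\p_\xi^2\Psi\|^2\,dt \leq C(\|(\Phi_0,\Psi_0)\|_1^2 + \varepsilon)$ after invoking the $\int_0^\T\|\p_\xi\Phi\|^2\,dt$ bound from Lemma \ref{lowterm1}.

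Next I would apply $\p_\xi$ to \subeqref{system2}{2} and multiply by $-\p_\xi^3\Psi$, producing $\tfrac{d}{dt}\tfrac12\|\p_\xi^2\Psi\|^2 + \mu\|\p_\xi^3\Psi\|^2$ on the left. The critical term on the right is $\int\p_\xi\bigl(\sigma'(\mathbf{\tilde v})\p_\xi\Phi\bigr)\p_\xi^3\Psi\,d\xi$, whose expansion $\sigma''(\mathbf{\tilde v})\p_\xi\mathbf{\tilde v}\,\p_\xi\Phi\,\p_\xi^3\Psi + \sigma'(\mathbf{\tilde v})\,\p_\xi^2\Phi\,\p_\xi^3\Psi$ is dominated by $\eta\|\p_\xi^3\Psi\|^2 + C_\eta(\|\p_\xi\Phi\|^2 + \|\p_\xi^2\Phi\|^2)$, whose time integral is controlled through Lemmas \ref{lowterm1} and \ref{lowterm2}. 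The quadratic term $\p_\xi\sigma(\mathbf v|\mathbf{\tilde v})\cdot\p_\xi^3\Psi$ is handled using $|\p_\xi\sigma(\mathbf v|\mathbf{\tilde v})|\leq C(|\p_\xi\Phi|^2 + |\p_\xi\Phi||\p_\xi^2\Phi|)$ combined with Sobolev embedding and the smallness of $\delta$; the error $\p_\xi\mathbf H_2\cdot\p_\xi^3\Psi$ is bounded via Lemma \ref{decay-error}. This yields $\supt\|\p_\xi^2\Psi\|^2 + \int_0^\T\|\p_\xi^3\Psi\|^2\,dt \leq C(\|(\Phi_0,\Psi_0)\|_2^2 + \varepsilon)$.

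Finally, $\supt\|\Psi\|^2$ and $\supt\|\p_\xi^2\Phi\|^2$ follow immediately from $\Psi = W + \mu\p_\xi\Phi$ and $\mu\p_\xi^2\Phi = \p_\xi\Psi - \p_\xi W$, combining the newly obtained $\supt\|\p_\xi\Psi\|^2$ with the Lemma \ref{lowterm2} bounds on $\supt(\|W\|^2 + \|\p_\xi W\|^2 + \|\p_\xi\Phi\|^2)$. The main obstacle is the potential appearance of $\p_\xi^3\Phi$ if one naively differentiated \subeqref{system2}{2} a second time and had to estimate $\sigma'(\mathbf{\tilde v})\p_\xi^3\Phi \cdot \p_\xi^4\Psi$-type products; the strategy above circumvents this by never differentiating the $\Psi$-equation more than once and instead reading off $\p_\xi^2\Phi$ algebraically from the effective-velocity change of variable, which eliminates the need for a new transformed or weighted energy estimate at the top order.
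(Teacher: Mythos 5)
Your two parabolic estimates are sound and are, up to a single integration by parts, exactly the paper's steps \eqref{relt5} and \eqref{relt6} (the paper differentiates \subeqref{system2}{2} once, resp.\ twice, and tests with $\p_\xi\Psi$, resp.\ $\p_\xi^2\Psi$, which after integrating by parts is the same as your testing with $-\p_\xi^2\Psi$, resp.\ $-\p_\xi^3\Psi$); and your algebraic recovery of $\supt\norm{\p_\xi^2\Phi}^2$ and $\supt\norm{\Psi}^2$ from $W=\Psi-\mu\p_\xi\Phi$ in \eqref{w} together with Lemmas \ref{lowterm1}--\ref{lowterm2} is a legitimate, slightly shorter substitute for the paper's final step \eqref{relt7}. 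However, there is a genuine gap: the lemma claims $\int_0^{\T}\norm{\p_\xi\Psi}_2^2\,dt$, and you never establish its lowest-order piece $\int_0^{\T}\norm{\p_\xi\Psi}^2\,dt$. This does not follow from what you prove. Lemmas \ref{lowterm1}--\ref{lowterm2} give time-integrability only of $\norm{\p_\xi\Phi}^2$ and $\norm{\p_\xi^2\Phi}^2$, not of $\norm{\p_\xi W}^2$, so the identity $\p_\xi\Psi=\p_\xi W+\mu\p_\xi^2\Phi$ does not yield it; nor can it be obtained from $\supt\norm{\p_\xi\Psi}^2$ and $\int_0^\T\norm{\p_\xi^2\Psi}^2\,dt$ uniformly in $\T$ (interpolation would require time-integrability or decay of $\norm{\Psi}$, which is not available). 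The term is not cosmetic: it is part of the dissipation asserted in Proposition \ref{mainpro} and is used in the large-time argument of Section \ref{section4}.

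The paper closes precisely this point as the \emph{first} step of its proof: multiply \subeqref{mainsystem}{1} by $\p_\xi W$ and substitute \subeqref{mainsystem}{2} for the time derivative (a cross, Kawashima-type estimate), arriving at \eqref{relt4}; after integrating and using Lemmas \ref{lowterm1}--\ref{lowterm2} this gives $\int_0^{\T}\norm{\p_\xi W}^2\,dt\leq C(\norm{(\Phi_0,W_0)}_1^2+\varepsilon)$ (see \eqref{W1}), whence $\int_0^{\T}\norm{\p_\xi\Psi}^2\,dt$ follows from $\p_\xi\Psi=\p_\xi W+\mu\p_\xi^2\Phi$ (see \eqref{psi1}). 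An equivalent fix in your variables is to multiply \subeqref{system2}{1} by $\p_\xi\Psi$, move $\p_t\Phi\,\p_\xi\Psi$ onto $\p_t(\Phi\p_\xi\Psi)-\p_\xi\Phi\,\p_t\Psi$ (up to a flux) and substitute \subeqref{system2}{2} for $\p_t\Psi$; all resulting terms are controlled by your bounds. Some step of this kind must be added before your argument proves the stated inequality.
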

\begin{proof}
	First, by multiplying $\p_{\xi}W$ on \subeqref{mainsystem}{1} and substituting \subeqref{mainsystem}{2} into the resulting equation, we have
		\begin{equation}\label{relt4}
			\begin{aligned}
				(\p_{\xi}W)^2=&\underbrace{\p_t(\Phi\p_{\xi}W)}_{\mathbf{I}_{4,1}}-\p_{\xi}\left\{\dots\right\}+\underbrace{(\sigma(\mathbf{v})-\sigma(\mathbf{\tilde{v}}))\p_{\xi}\Phi-\mu\p^2_{\xi}\Phi\p_{\xi}W}_{\mathbf{I}_{4,2}}\\
				&-\underbrace{(\p_{\xi}\mathbf{H}_2-\mu\p^2_{\xi}\mathbf{H}_1)\Phi+\mathbf{H}_1\p_{\xi}W}_{\mathbf{I}_{4,3}},
			\end{aligned}
		\end{equation}
		where $\left\{\dots\right\}=(\sigma(\mathbf{v})-\sigma(\mathbf{\tilde{v}}))\Phi+s\p_{\xi}W\Phi$, which vanishes at far fields $\xi=\pm\infty$. 
		It is obvious that
		\begin{align*}
			\left|\int_{0}^{\T}\int_{\R} \mathbf{I}_{4,1}\,d\xi\,dt\right|\leq C(\supt\norm{\Phi}^2+\supt\norm{\p_{\xi}W}^2+\norm{\Phi_0}^2+\norm{\p_{\xi}W_0}^2),
		\end{align*}
		\begin{align*}
			\int_{0}^{\T}\int_{\R} |\mathbf{I}_{4,2}|\,d\xi\,dt&=\int_{0}^{\T}\int_{\R} |(\sigma(\mathbf{v})-\sigma(\mathbf{\tilde{v}}))\p_{\xi}\Phi-\mu\p^2_{\xi}\Phi\p_{\xi}W|\,d\xi\,dt\\
			&\leq \frac{1}{2}\int_{0}^{\T}\|\p_\xi W\|^2\,dt+C\int_{0}^{\T} \norm{\p_{\xi}\Phi}^2+\norm{\p_\xi^2\Phi}^2\,dt+C\varepsilon.
		\end{align*}
		For $\mathbf{I}_{4,3}$, we use Lemma \ref{decay-error} to get that
		\begin{align*}
			\int_{0}^{\T}\int_{\R} |\mathbf{I}_{4,3}|\,d\xi\,dt
			&\leq \int_{0}^{\T} (\norm{\p_{\xi}\mathbf{H}_2}+\norm{\p^2_{\xi}\mathbf{H}_1})\norm{\Phi}+\norm{\mathbf{H}_1}\norm{\p_{\xi}W}\,dt\\ 
			&\leq C\varepsilon\int_{0}^{\T}e^{-\alpha t} (\norm{\Phi}+\norm{\p_{\xi}W})\,dt\\
			&\leq C\varepsilon(\supt\norm{\Phi}^2+\supt\norm{\p_{\xi}W}^2+1),
		\end{align*}
		Integrating \eqref{relt4} over $[0,\T]\times\mathbb{R}$ and combining with the estimate of $\mathbf{I}_{4,1}$-$\mathbf{I}_{4,3}$ , then using Lemma \ref{lowterm1}, \ref{lowterm2}, we have
		\begin{equation}\label{W1}
			\int_{0}^{\T} \norm{\p_{\xi}W}^2 \,dt\leq C(\norm{(\Phi_0,W_0)}^2_1+\varepsilon).
		\end{equation}
		Then, we can get from \eqref{W1}, \eqref{w}, Lemma \ref{lowterm1} and \ref{lowterm2} that
		\begin{equation}\label{psi1}
			\int_{0}^{\T} \norm{\p_{\xi}\Psi}^2 \,dt\leq C(\norm{\Phi_0}^2_2+\norm{\Psi_0}^2_1+\varepsilon).
		\end{equation}
		
		Next, differentiating \subeqref{system2}{2} with respect to $\xi$ and then multiplying $\p_{\xi}\Psi$ on the resulting equation, yield that
		\begin{equation}\label{relt5}
			\p_t\left(\frac{(\p_{\xi}\Psi)^2}{2}\right)+\mu(\p^2_{\xi}\Psi)^2=\p_{\xi}\left\{\dots\right\}-(\sigma(\mathbf{v})-\sigma(\mathbf{\tilde{v}}))\p^2_{\xi}\Psi-\mathbf{H}_2\p^2_{\xi}\Psi,
		\end{equation}
		where $\left\{\dots\right\}=\frac{s(\p_{\xi}\Psi)^2}{2}+(\sigma(\mathbf{v})-\sigma(\mathbf{\tilde{v}}))\p_{\xi}\Psi+\mu\p^2_{\xi}\Psi\p_{\xi}\Psi+\mathbf{H}_2\p_{\xi}\Psi$ which vanishes at far fields $\xi=\pm\infty$. It is clear that
		\begin{equation}\label{high2.1}
			\begin{aligned}
				&\int_{0}^{\T}\int_{\R}\abs{(\sigma(\mathbf{v})-\sigma(\mathbf{\tilde{v}}))\p^2_{\xi}\Psi-\mathbf{H}_2\p^2_{\xi}\Psi}\,d\xi\,dt\\
				&\leq \frac{\mu}{4}\int_{0}^{\T}\int_{\R}\abs{\p^2_{\xi}\Psi}^2\,d\xi\,dt+C\int_{0}^{\T}\norm{\p_{\xi}\Phi}^2\,dt+C\varepsilon+C\varepsilon\int_{0}^{\T}\norm{\p^2_{\xi}\Psi}^2 \,dt,  
			\end{aligned}
		\end{equation}
		Then, by integrating \eqref{relt5} over $\R\times[0,\T]$, using Lemma \ref{lowterm1} and \eqref{high2.1}, and choosing $\varepsilon_0$ is small enough, we can get that
		\begin{equation}\label{estimate3}
			\supt\norm{\p_{\xi}\Psi}^2+\int_{0}^{\T}\norm{\p^2_{\xi}\Psi}^2\,dt\leq C(\norm{\Phi_0}^2_2+\norm{\Psi_0}^2_1+\varepsilon). 
		\end{equation}
		
		Third, differentiating \subeqref{system2}{2} with respect to $\xi$ twice and multiplying $\p^2_{\xi}\Psi$
		on the resulting equation, give that
		\begin{equation}\label{relt6}
			\p_t\left(\frac{(\p^2_{\xi}\Psi)^2}{2}\right)+\mu(\p^3_{\xi}\Psi)^2=\p_{\xi}\left\{\dots\right\}-[\sigma'(\mathbf{v})\p_{\xi}\Psi+(\sigma'(\mathbf{v})-\sigma'(\mathbf{\tilde{v}}))\p_{\xi}\mathbf{\tilde{v}}]\p^3_{\xi}\Psi
			-\p_{\xi}\mathbf{H}_2\p^3_{\xi}\Psi
		\end{equation}
		where $\left\{\dots\right\}=\frac{s(\p^2_{\xi}\Psi)^2}{2}+\p_{\xi}(\sigma(\mathbf{v})-\sigma(\mathbf{\tilde{v}}))\p^2_{\xi}\Psi+\mu\p^3_{\xi}\Psi\p^2_{\xi}\Psi+\p_{\xi}\mathbf{H}_2\p^2_{\xi}\Psi$. It follows from \eqref{v-bdd}, \eqref{vsx'decay}that
		\begin{equation}\label{high3.1}
			\begin{aligned}
				&\int_{0}^{\T}\int_{\R}\abs{[\sigma'(\mathbf{v})\p_{\xi}\Psi+(\sigma'(\mathbf{v})-\sigma'(\mathbf{\tilde{v}}))\p_{\xi}\mathbf{\tilde{v}}]\p^3_{\xi}\Psi}\,d\xi\,dt\\
				&\leq C\int_{0}^{\T}\int_{\R}\abs{\p_{\xi}\Phi\p^3_{\xi}\Psi+\p_{\xi}\Psi\p^3_{\xi}\Psi}\,d\xi\,dt\\
				&\leq\frac{\mu}{2}\int_{0}^{\T}\int_{\R}\abs{\p^3_{\xi}\Psi}^2\,d\xi\,dt+C\int_{0}^{\T}\int_{\R}\abs{\p_{\xi}\Phi}^2+\abs{\p_{\xi}\Psi}^2\,d\xi\,dt,
			\end{aligned}
		\end{equation}
		It follows from Lemma \ref{decay-error} that
		\begin{equation}\label{high3.2}
			\int_{0}^{\T}\int_{\R}\abs{\p_{\xi}\mathbf{H}_2\p^3_{\xi}\Psi}\,d\xi\,dt\leq\int_{0}^{\T}\norm{\p_{\xi}\mathbf{H}_2}\norm{\p^3_{\xi}\Psi}\,dt\leq C\varepsilon+C\varepsilon\int_{0}^{\T}\norm{\p^3_{\xi}\Psi}^2\,dt. 
		\end{equation}
		By intergrating \eqref{relt6} over $\R\times[0,\T]$, plugging \eqref{high3.1} and \eqref{high3.2}, and choosing $\varepsilon_0$ small enough, we obtain that
		\begin{equation}\label{estimate4}
			\supt\norm{\p^2_{\xi}\Psi}^2+\int_{0}^{\T}\norm{\p^3_{\xi}\Psi}^2\,dt\leq C(\norm{\Phi_0}^2_2+\norm{\Psi_0}^2_2+\varepsilon).
		\end{equation}
		
		Finally, differentiating \subeqref{system2}{1} with respect to $\xi$ twice and multiplying $\p^2_{\xi}\Phi$
		on the resulting equation imply that
		\begin{equation}\label{relt7}
			\p_t\left(\frac{(\p^2_{\xi}\Phi)^2}{2}\right)=\p^3_{\xi}\Psi\p^2_{\xi}\Phi+\p^2_{\xi}\mathbf{H}_1\p^2_{\xi}\Phi+\p_{\xi}\left(\frac{s(\p^2_{\xi}\Phi)^2}{2}\right),
		\end{equation}
		Thus, it follows from \eqref{estimate1}, \eqref{estimate2}, \eqref{estimate3} and \eqref{estimate4} that
		\begin{align*}
			\supt\norm{\p^2_{\xi}\Phi}^2 &\leq C\left(\norm{\Phi_0}^2_2+\int_{0}^{\T}\norm{\p_{\xi}\Phi}^2_1\,dt+\int_{0}^{\T}\norm{\p^3_{\xi}\Psi}^2\,dt+\int_{0}^{\T}\norm{\p^2_{\xi}\mathbf{H}_1}^2\,dt\right)\\
			&\leq C(\norm{\Phi_0}^2_2+\norm{\Psi}^2_2+\varepsilon).
		\end{align*}
	Combining the estimates above, and  Lemma \ref{lowterm1}, \ref{lowterm2}, we can comlete the proof of this Lemma.
\end{proof}
As a consequence of Lemma \ref{lowterm1}, \ref{lowterm2} and \ref{lowterm3}, one can immediately get Proposition \ref{mainpro}.
\section{Proof of Theorem \ref{mainresult}}\label{section4}
For any $\T>0$, let
\begin{align*}
	\mathcal{B}(0,\T)=&\left\{(\Phi,\Psi) \in C\Big(0,\T;H^2(\R) \times H^2(\R)\Big):\right.\\
	&\left.\p_{\xi}\Phi\in L^2\Big(0,\T;H^1(\R)\Big),\p_{\xi}\Psi \in L^2\Big(0,\T;H^2(\R)\Big)\right\}.
\end{align*}
The local existence and uniqueness of solution $(\Phi,\Psi)$ to \eqref{system2} and \eqref{newinitial} in  $\mathcal{B} (0,\T_0)$ can be proved by the standard contraction mapping theorem. Hence, with
Proposition \ref{mainpro}, one can let $\varepsilon_0>0$ be small such that $\mathop{sup}\limits_{t \in [0,\T_0]}\norm{\Phi,\Psi}^2_2 \leq C_0(\varepsilon^2_0+\varepsilon_0)<\delta^2_0$,
then the a priori assumptions \eqref{initialsmall} can be closed. Through a standard continuation
argument, one can obtain a global in time solution $(\Phi,\Psi) \in \mathcal{B}(0,+\infty)$ to\eqref{system2}, \eqref{newinitial}, and satisfy the estimate \eqref{proesti} with $\T=+\infty$, provided that the initial perturbations are small. This yields the global existence and uniqueness of solution to \eqref{VNC-Ion} and \eqref{initial} satisfying \eqref{regularity}. Moreover, with the estimate \eqref{proesti} holds for $\T=+\infty$ at hand, one can follow the same line as
in \cite{zero-mass} to prove that 	
\begin{equation}\label{far1}
	\norm{(v-\tilde{v},u-\tilde{u})}_{L^{\infty}(\R)}(t) \to 0\quad \text{as}~t \to +\infty.
\end{equation} 
The details
are omitted. 

Note that
\begin{equation}\label{asym1}
	\begin{aligned}
		&\mathop{sup}\limits_{x \in \R}|\vt(x,t)-v^S(x-st-\X_{\infty})|\leq \mathop{sup}\limits_{x \in \R}|\big(v^S(x-st-\X(t))-v^S(x-st-\X_{\infty})\big)|\\
		&+\mathop{sup}\limits_{x \in \R}|(v_l-\avl)(1-g_{\X})+(v_r-\avr)g_{\X}|\\
		&\leq C|\X(t)-\X_{\infty}|+\mathop{sup}\limits_{x \in \R}|v_l-\avl|+\mathop{sup}\limits_{x \in \R}|v_r-\avr|\leq Ce^{-\alpha t},
	\end{aligned}
\end{equation}
From \eqref{far1} and \eqref{asym1}, \eqref{asymptotic} holds for $v$. Similarly, the asymptotic of $u$ in \eqref{asymptotic} also holds true. Therefore, Theorem \ref{mainresult} is proved.

\appendix

\section{Some tedious calculations for the integral of $\mathbf{I_{1,6}}$}
\subsection{Proof of \eqref{interact}}

For 
\begin{align*}
	[\zeta](v_k):=\mu aY(\vsx)' \ph^2(v_k^+)-\mu aY\ph^2(\vsx)' (v_k^-)
\end{align*}
where $k=1,2,\dots, N-1$, we calculate 
the formula $k=1$ as an example, others are similar. When $\vsx \in [v_0,v_1]$, $a(\vsx)=[H(\vsx+\frac{1}{2H})]^{\frac{5}{4}}$, $T(\vsx)=\frac{a'(\vsx)}{4a(\vsx)}$, when $\vsx \in [v_1,v_2]$, $a(\vsx)=[H(\frac{1}{H}+v_1-\vsx)]^{\frac{5}{4}}$, 
$T(\vsx)=\frac{3a'(\vsx)}{4a(\vsx)}$, we have
\begin{equation}\label{apd1}
	\begin{aligned}
		&[\zeta](v_1)=\mu aY\ph^2(\vsx)'(v_1^+)-\mu aY\ph^2(\vsx)'(v_1^-)\\
		&=\frac{\mu}{4}(\vsx)'(v_1)\ph^2(v_1)[-a'(\vsx)(v_1^+)-a'(\vsx)(v_1^-)]=\frac{5\mu}{16} (\vsx)'(v_1)\ph^2(v_1)\cdot (1-1)=0.
	\end{aligned}
\end{equation}
then we calculate 
\begin{align*}
	[\zeta](v_N)=\mu aY(\vsx)'\ph^2(v_N^+)-\mu aY(\vsx)'\ph^2(v_N^-),
\end{align*}
where $v_N=v_b$. When $\vsx \in [v_{N-1},v_N]$, $a(\vsx)=[H(\frac{1}{H}+v_{N-1}-\vsx)]^{\frac{5}{4}}$, $T(\vsx)=\frac{3a'(\vsx)}{4a(\vsx)}$, $\vsx \in [v_N,\avr]$, $a(\vsx)=\frac{C_0\sigma'(\vsx)}{(s^2-\sigma'(\vsx))^2}$, $T(\vsx)=C_N \cdot e^{\frac{3}{4}ln(a(v_b))}$,
by \eqref{Hbig}, we have 
\begin{equation}\label{apd2}
	\begin{aligned}
		[\zeta](v_N)=\frac{\mu}{2}\left(-\frac{5H}{2^{\frac{13}{4}}}+\frac{(s^2+\sigma'(v_b))\sigma''(v_b)}{2^{\frac{5}{4}}(s^2-\sigma'(v_b))}\right)\leq 0.
	\end{aligned}
\end{equation}

\subsection{Proof of \eqref{interact2}}
It follows from the formula of $\wh,~\th$ in\eqref{wh} and \eqref{th} that, for $\vsx\in[0,v_d]$, 
$$Y(\vsx)=-\frac{1}{2}\frac{\hat{a}'}{a}(\vsx),\quad a'(\vsx)=\sigma''(\vsx)\wh(\vsx)+\sigma'(\vsx)\left(-\frac{v_d^2-(\vsx)^2}{\sigma''(\vsx)}+\frac{-2\vsx}{\sigma'(\vsx)}\right)$$ which yields that
$$\hat{a}(v_d)=\wh(v_d)=0, ~~-\mu aY(v_d)=\frac{\mu}{2}\hat{a'}=-\frac{\mu}{2}v_d$$ 
Thus, from \eqref{zeta},  we have
\begin{align*}
	\zeta(\xi_d+\X(t))=-\big[\frac{\mu v_d}{2}(\vsx)'\ph^2\big](\xi_d+\X(t))\leq 0.
\end{align*}

\

\noindent {\bf Acknowledgments:}
The research of Yu Mei is supported by the National Natural Science Foundation of China No. 12101496 and 12371227.	

\bigskip

\noindent {\bf Data availability }Data sharing not applicable to this article as no datasets were generated or analyzed during
the current study.

\section*{Declarations}

\noindent {\bf Conflict of interest} The authors declared that they have no conflict of interest to this work.

\bibliographystyle{plain}
\bibliography{visco-ref.bib}
	
%
%
%
%
%
%
%

\end{document}